\newtheorem{theorem}{Theorem}[section]
\newtheorem{lemma}[theorem]{Lemma}
\newtheorem{corollary}[theorem]{Corollary}
\newtheorem{definition}[theorem]{Definition}
\newenvironment{proof}{{\bf Proof:\ }}{\hfill$\Box$\medskip}
\newcommand{\Exp}{\mathbb{E}}
\renewcommand{\Pr}{\mathbb{P}}
\newcommand{\Var}{{\rm Var}}
\newcommand{\ee}{{\rm e}}
\newcommand{\weight}{c}
\newcommand{\SP}{\rightsquigarrow}
\newcommand{\LSP}{\mbox{$\cal LSP$}}
\newcommand{\SPP}{\mbox{$\cal SP$}}
\newcommand{\ignore}[1]{}
\newcommand{\remove}[1]{}
\newcommand{\reals}{\hbox{$\rlap{\rm I} \> \kern-.2mm{\rm R}$}}
\newcommand{\etal}{{\em et al.\ }}
\newcommand{\apsp}{\mbox{{\tt apsp}}}
\newcommand{\heap}{\mbox{{\tt heap}}}
\newcommand{\extractmin}{\mbox{{\tt extract}-{\tt min}}}
\newcommand{\decreasekey}{\mbox{{\tt decrease}-{\tt key}}}
\newcommand{\findmin}{\mbox{{\tt find}-{\tt min}}}
\newcommand{\INSERT}{\mbox{{\tt insert}}}
\newcommand{\DELETE}{\mbox{{\tt delete}}}
\newcommand{\heapinsert}{\mbox{{\tt heap}-{\tt insert}}}
\newcommand{\heapdelete}{\mbox{{\tt heap}-{\tt delete}}}
\newcommand{\PATH}{\mbox{{\tt path}}}
\newcommand{\newpath}{\mbox{{\tt new}-{\tt path}}}
\newcommand{\EXAMINE}{\mbox{{\tt examine}}}
\newcommand{\examine}{\mbox{{\tt examine}}}
\newcommand{\init}{\mbox{{\tt init}}}
\newcommand{\dapspinit}{\mbox{{\tt dapsp}}-\mbox{{\tt init}}}
\newcommand{\FALSE}{{\tt false}}
\newcommand{\TRUE}{{\tt true}}
\newcommand{\NULL}{null}
\newcommand{\buildpaths}{\mbox{{\tt build}-{\tt paths}}}
\newcommand{\removepath}{\mbox{{\tt remove}-{\tt path}}}
\newcommand{\replacepath}{\mbox{{\tt replace}-{\tt path}}}
\newcommand{\removeextensions}{\mbox{{\tt remove}-{\tt exts}}}
\newcommand{\newshortestpath}{\mbox{{\tt new}-{\tt shortest}-{\tt path}}}
\newcommand{\update}{\mbox{{\tt update}}}
\begin{document}

\title{All-Pairs Shortest Paths in $O(n^2)$ time with high probability }

\author{\em Yuval Peres
\thanks{Microsoft Research, Redmond.
E-mail: {\tt peres@microsoft.com}.}
\and \em Dmitry Sotnikov
\thanks{School of Computer Science,
Tel Aviv University, Tel Aviv 69978, Israel. E-mail: {\tt
dmitrysot@gmail.com}.}
\and \em Benny Sudakov
\thanks{
Department of Mathematics, UCLA, Los Angeles, CA 90095, USA.
E-mail: {\tt bsudakov@math.ucla.edu}.
Research supported
in part by NSF CAREER award DMS-0812005 and by a USA-Israeli BSF grant.}
\and \em Uri Zwick
\thanks{School of Computer Science,
Tel Aviv University, Tel Aviv 69978, Israel. E-mail: {\tt
zwick@tau.ac.il}. Work supported by grant no.\ 2006261 of the United-States - Israel Binational Science Foundation (BSF).}
}

\date{}

\maketitle

\begin{abstract}\noindent
We present an all-pairs shortest path algorithm whose running time on a complete directed graph on~$n$ vertices whose edge weights are chosen independently and uniformly at random from $[0,1]$ is~$O(n^2)$, in expectation and with high probability. This resolves a long standing open problem. The algorithm is a variant of the dynamic all-pairs shortest paths algorithm of Demetrescu and Italiano. The analysis relies on a proof that the number of \emph{locally shortest paths} in such randomly weighted graphs is $O(n^2)$, in expectation and with high probability. We also present a dynamic version of the algorithm that recomputes all shortest paths after a random edge update in $O(\log^{2}n)$ expected time.
\end{abstract}

\section{Introduction}\label{S-intro}

The \emph{All-Pairs Shortest Paths} (APSP) problem is one of the most important, and most studied, algorithmic graph problems. Given a weighted directed graph $G=(V,E,c)$, on $|V|=n$ vertices and $|E|=m$ edges, where $c:E\to \reals^+$ is a length (or cost) function defined on its edges, we would like to compute 
the \emph{distances} between all pairs of vertices in the graph and a succinct representation of all \emph{shortest paths}. (The length of a path is the sum of the lengths of the edges participating in the path.)

The APSP problem can be solved in $O(mn+n^2\log n)$ worst-case time by running Dijkstra's algorithm from each vertex of the graph. (See Dijkstra \cite{Di59}, Fredman and Tarjan \cite{FrTa87}.) A slightly better running time of $O(mn+n^2\log\log n)$ was obtained by Pettie \cite{Pettie04}, building on techniques developed by Thorup \cite{Thorup99}. Karger, Koller and Phillips \cite{KaKoPh93} and McGeoch \cite{McGeoch95} developed algorithms that run in $O(m^*n+n^2\log n)$ time, where $m^*$ is the number of edges in the graph that are shortest paths. 

Demetrescu and Italiano \cite{DeIt04a,DeIt06} (see also Thorup \cite{Thorup04Dyn}) obtained a \emph{dynamic} APSP algorithm with an \emph{amortized} vertex update time of $\tilde{O}(n^2)$. Thorup \cite{Thorup05} obtained a dynamic algorithm with an $\tilde{O}(n^{2.75})$ \emph{worst-case} vertex update time. A \emph{vertex update} may insert, delete and change the weight of edges that touch a given vertex~$v$. An \emph{edge update} may only insert, delete or change the weight of a single edge. The algorithms of Demetrescu and Italiano \cite{DeIt04a,DeIt06} and Thorup \cite{Thorup04Dyn,Thorup05} can be used, of course, to perform edge updates, but the updates times may still be $\tilde{O}(n^2)$ and $\tilde{O}(n^{2.75})$, respectively.

Many researchers developed APSP algorithms that work well on \emph{random} instances, most notably complete directed graphs on $n$ vertices with random weights on their edges. The simplest such model, on which we focus in this paper, is the one in which all edge weights are drawn independently at random from the \emph{uniform} distribution on $[0,1]$. Hassin and Zemel \cite{HaZe85} and Frieze and Grimmett \cite{FrGr85} observed that, with very high probability, only the $O(\log n)$ cheapest edges emanating from each vertex participate in shortest paths. Thus, the APSP in this setting can be solved in $O(n^2\log n)$ expected time using the algorithms of Karger \etal \cite{KaKoPh93} and McGeoch \cite{McGeoch95}, or by simply selecting the $O(\log n)$ cheapest edges emanating from each vertex and then running Dijkstra's algorithm from each vertex. All these results actually hold in the more general setting in which edge weights are independent identically distributed random variables with a common cumulative distribution function~$F$ that satisfies $F(0)=0$ and $F'(0)$ exists and is strictly positive. (The uniform distribution on $[0,1]$ with $F(x)=x$, and the \emph{exponential} distribution, with $F(x)=1-{\rm e}^{-x}$ clearly satisfy these conditions.) Furthermore, the running time of these algorithms is $O(n^2\log n)$ with \emph{high probability}, i.e., probability that tends to~$1$ as $n$ tends to infinity, and not just in expectation.

Spira \cite{Spira73} obtained an APSP algorithm with an expected running time of $O(n^2\log^2 n)$ for complete directed graphs with edge weights drawn in an \emph{endpoint independent} manner. More specifically, for each vertex~$v$ a sequence of~$n$ positive numbers is chosen by an arbitrary deterministic or probabilistic process. These~$n$ numbers are then assigned to the~$n$ edges emanating from~$v$ in a \emph{random} order, with all $n!$ possible ordering being equally likely. Bloniarz \cite{Bloniarz83} presented an improved algorithm with an expected running time of $O(n^2 \log n \log^*n)$. Moffat and Takaoka \cite{MoTa87} and Mehlhorn and Priebe \cite{MePr97} improved the expected running time to $O(n^2\log n)$ and showed that it also holds with high probability.

Cooper \etal \cite{CoFrMePr00} obtained an APSP algorithm with an expected running time of $O(n^2\log n)$ in the \emph{vertex potential} model in which edge weights may be both positive and negative.

Meyer \cite{Meyer03}, Hagerup \cite{Hagerup06} and Goldberg \cite{Goldberg08} obtained \emph{Single-Source Shortest Paths} (SSSP) algorithms with an expected running time of $O(m)$.
The $m$-edge input graph may be arbitrary but its edge weights are assumed to be chosen at random from a common non-negative probability distribution. When the edge weights are independent, the running time of these algorithms is $O(m)$ with high probability.

Friedrich and Hebbinghaus \cite{FrHe08} presented an average case analysis of the dynamic APSP algorithm of Demetrescu and Italiano \cite{DeIt04a,DeIt06} on random \emph{undirected} graphs. The graphs in their analysis are chosen according to the $G(n,p)$ model, in which each edge of the complete graph is selected with probability~$p$, and edges of the random graph are given i.i.d.\ uniform random weights. They show that the expected edge update time is at most $O(n^{4/3+\epsilon})$, for any $\epsilon>0$. This bound is essentially tight when $p=1/n$, i.e., at the phase transition of the random graph, when the largest component is, with high probability, of size $\Theta(n^{2/3})$. When $p\ge (1+\epsilon')/n$, they show that the expected update time is $O(n^\epsilon/p)$, for every $\epsilon>0$.

Non-algorithmic aspects of distances and shortest paths in randomly weighted graphs were also a subject of intensive research in probability theory. We mention here only the results that are most relevant for us. Davis and Prieditis \cite{DaPr93} and Janson \cite{Janson99} showed that the expected distance of two vertices in a complete graph with random edge weights drawn independently from an \emph{exponential} distribution with mean~$1$ (i.e., $F(x)=1-{\rm e}^{-x}$) is exactly $H_{n-1}/(n-1)=(\ln n)/n + O(1/n)$, where $H_k=\sum_{i=1}^k \frac{1}{k}$ is the $k$-th Harmonic number. The probability that a given edge is a shortest path between its endpoints is also exactly $H_{n-1}/(n-1)$.
Exponential random variables are convenient to work with due to their \emph{memoryless} property. The same asymptotic results hold when the edges weights are chosen independently and uniformly from $[0,1]$.
In the exponential case, the tree of shortest paths from a given vertex has the same distribution as a \emph{random recursive tree} on~$n$ vertices obtained using the following simple process: Start with a root; add the remaining $n-1$ vertices, each time choosing the parent of the new vertex uniformly at random among the vertices that are already in the tree. The expected \emph{depth} of a vertex in such a tree, and hence the expected number of edges in a shortest path, is $\ln n+O(1)$. 
For further results regarding recursive trees and shortest paths, see
Devroye \cite{Devroye87}, Smythe and Mahmoud \cite{SmMa95} and Addario-Berry \etal~\cite{AdBrLu10}.

In their survey on the algorithmic theory of random graphs, Frieze and McDiarmid \cite{FrMc97} state the following open problem (Research Problem 22 on p.~28): ``Find a $o(n^2\log n)$ expected time algorithm for the all pairs
problem under a natural class of distributions, e.g., i.i.d.\ uniform on $[0,1]$.'' We solve this open problem by giving an $O(n^2)$ expected time algorithm for the problem, which is of course best possible. Furthermore, our
algorithm runs in $O(n^2)$ time with high probability and works for both directed and undirected versions of the all-pairs shortest paths problem.

Our $O(n^2)$-time APSP algorithm is a static version of the dynamic APSP algorithm of Demetrescu and Italiano \cite{DeIt04a,DeIt06} (see especially Section~3.4 of \cite{DeIt06})
with some modified data structures. The novel part of this paper is not the algorithm itself, but rather the probabilistic analysis that shows that it runs in $O(n^2)$ time, in expectation and with high probability.

We also obtain an $O(\log^{2}n)$ upper bound on the expected time needed to update all shortest paths following a random edge update, i.e., an update in which a random edge of the complete directed graph is selected and given a new random edge weight drawn uniformly at random from $[0,1]$.

The rest of the paper is organized as follows. In Section~\ref{S-DI} we sketch the static and dynamic versions of the algorithm of Demetrescu and Italiano \cite{DeIt04a,DeIt06} used in this paper. (Complete descriptions of these algorithms are given in Appendices~\ref{S-static} and~\ref{S-dynamic}.)
%
The crucial factor that determines the running time of these algorithms is the number of \emph{locally shortest paths} in the graph. A path is a \emph{locally} shortest path (\emph{LSP}) if the paths obtained by deleting its first and last edge, respectively, are \emph{shortest paths}.
In Section~\ref{S-DIST} we collect some known and some new results regarding the distances between vertices in randomly weighted graphs.
Using the results of Section~\ref{S-DIST}, we show in
Section~\ref{S-LSP}
that the \emph{expected} number of LSPs in a complete directed graph with independent uniformly distributed random weights is $O(n^2)$.
In Section~\ref{S-high} we show that the number of LSPs is~$O(n^2)$ \emph{with high probability}. Sections~\ref{S-LSP} and~\ref{S-high} are the main sections of the paper.
%
%
In Section~\ref{S-heap} we show that a fairly simple \emph{bucket} based priority queue, with a constant amortized update time, in conjunction with the fact that the number of LSPs is $O(n^2)$, in expectation and with high probability, yields the promised $O(n^2)$-time APSP algorithm.
In Section~\ref{S-update} we consider the expected time needed to perform \emph{random edge} updates. Interestingly, the arguments used in Sections~\ref{S-high} and~\ref{S-update} are related, as they both focus on the expected number of shortest paths that \emph{change} when a single edge is given a new random edge weight. (The link is the Efron-Stein inequality used in Section~\ref{S-high}.)
In Section~\ref{S-concl} we very briefly consider other random graph models.
In particular, our algorithm still runs in $O(n^2)$ expected time in the directed $G(n,p)$ model, in which each edge is present with probability~$p$, with independent uniformly distributed edge weights, at least when $p\gg (\ln n)/n$.
We end in Section~\ref{S-concl} with some concluding remarks and open problems.
%
%

\section{The algorithm of Demetrescu and Italiano}\label{S-DI}

Our $O(n^2)$ time bound, in expectation and with high probability, on the complexity of the solving the APSP problem on complete directed graphs with independent edge weight drawn uniformly from $[0,1]$, and the $O(\log^2 n)$ expected time bound on the complexity of performing a random edge update are both obtained using variants of the dynamic APSP algorithm of Demetrescu and Italiano \cite{DeIt04a,DeIt06}.

As our main result is the \emph{analysis} of these variants, and not the variants themselves, we begin by sketching the main features of the variants we use, mentioning only what the reader needs to know to understand our analysis. A complete description of the algorithms is given in Appendices~\ref{S-static} and \ref{S-dynamic}. (We believe that our variants are also of some interest, as they are not identical to the algorithms of \cite{DeIt04a,DeIt06}.)

Let $G=(V,E,c)$ be a weighted directed graph, where $c:E\to (0,\infty)$ is a cost function defined on its edges. (We use weights and costs interchangingly.)
For simplicity, we assume that all shortest paths in~$G$ are \emph{unique}. Under essentially all probabilistic models considered in this paper, this assumption holds with probability~$1$.
(Non-uniqueness of shortest paths can be dealt with as in \cite{DeIt04a}.)
We let $u\to v$ denote the edge $(u,v)\in E$, and let $u\SP v$ denote the (unique) shortest path from~$u$ to~$v$ in the graph, if they exist.


The key notion behind the algorithm of Demetrescu and Italiano \cite{DeIt04a} is the notion of \emph{locally shortest paths}.
\begin{definition}[Locally Shortest Paths]
A path is a \emph{locally shortest path} (or \emph{LSP}, for short) if the path obtained by
deleting its first edge, and the path obtained by deleting its last edge, are both shortest paths.
\end{definition}

More formally, if we let $u\to u'\SP v'\to v$ denote the path composed of the edge $u\to u'$, followed by the shortest path from $u'$ to~$v'$, and then by the edge $v'\to v$,
then $u\to u'\SP v'\to v$ is a locally shortest path if and only if $u\to u'\SP v'$ and $u'\SP v'\to v$ are both shortest paths. (If $u'=v'$, then $u'\SP v'$ is an empty path.) An edge is considered to be a locally shortest path. (Empty paths are considered to be shortest paths.)
%
A shortest path is of course also a locally shortest path.
A locally shortest path, however, is not necessarily a shortest path.

\subsection{A static version}\label{SS-static}

We begin by describing a static version of the algorithm of Demetrescu and Italiano \cite{DeIt04a,DeIt06}.
Let $G=(V,E,c)$ be a weighted directed graph. The algorithm constructs all shortest paths in~$G$ by essentially running Dijkstra's algorithm in parallel from all vertices, while only examining LSPs, as explained below.

For every $u,v\in V$, the algorithm maintains a number $dist[u,v]$ which is the length of the shortest path from~$u$ to~$v$ found so far. Initially $dist[u,v]$ is set to $c(u,v)$, if $(u,v)\in E$, or to $\infty$, otherwise. Each pair $(u,v)\in E$ is inserted into a heap (priority queue) $Q$, with $dist[u,v]$ serving as its key. The heap $Q$ holds all pairs of vertices $(u,v)$ such that at least one path from~$u$ to~$v$ in the graph was already discovered, but the shortest path from~$u$ to~$v$ was not yet declared.

In each iteration, the algorithm extracts a pair $(u,v)$ with the smallest key in~$Q$. As in Dijkstra's algorithm, $dist[u,v]$ is then the distance from~$u$ to~$v$ in~$G$. The algorithm then examines LSPs that \emph{extend} the shortest path $u\SP v$ and checks whether they are shorter than the currently best available paths between their endpoints.
(An extension of a path $\pi$ is a path obtained by adding an edge to its beginning or end.)
To efficiently find the LSPs that extend a shortest path $u\SP v$, the algorithm also maintains, in addition to $dist[u,v]$, the following information for every $u,v\in V$:

\smallskip
$\quad p[u,v]$ -- The \emph{second} vertex on the shortest path from~$u$ to~$v$ found so far.

$\quad q[u,v]$ -- The \emph{penultimate} (next to last) vertex on the shortest path from~$u$ to~$v$ found so far.

$\quad L[u,v]$ -- A list of vertices $w$ for which $w\to u\SP v$ is known to be a shortest path.

$\quad R[u,v]$ -- A list of vertices $w$ for which $u\SP v\to w$ is known to be a shortest path.

\smallskip
If no path from~$u$ to~$v$ was found yet, then $p[u,v]=q[u,v]=\NULL$.
The lists $L[u,v]$ and $R[u,v]$ specify the \emph{left} and \emph{right} extensions
of $u\SP v$ that are known to be shortest paths. Clearly $L[u,v]$ and $R[u,v]$ are non-empty only after the shortest path $u\SP v$ was identified by the algorithm.

Suppose that $u\to u'\SP v'\to v$, where $u'=p[u,v]$ and $v'=q[u,v]$, was just identified as a shortest path. For every $w\in L[u,v']$, $w\to u\SP v$ is an LSP. Similarly, for every $w\in R[u',v]$, $u\SP v\to w$ is an LSP. These paths are now examined by the algorithm. If, for example, a path $w\to u\SP v$ is found to be shorter then the currently available path from~$w$ to~$v$,
or is the first path found from~$w$ to~$v$, then $dist[w,v]$, $p[w,v]$ and $q[w,v]$ are updated accordingly and the key of $(w,v)$ in~$Q$ is decreased. (If $(w,v)$ is not already in~$Q$, it is inserted into~$Q$.)

This is the gist of the static version of the algorithm of Demetrescu and Italiano \cite{DeIt04a,DeIt06}, which, for concreteness, we refer to as algorithm \apsp. For a complete description and pseudo-code, see Appendix~\ref{S-static}.

As algorithm \apsp\ uses a priority queue, its running time depends on the characteristics of the priority queue used. For a specific implementation, we let $T_{ins}(n),T_{dec}(n)$ and $T_{ext}(n)$ denote  the (amortized) times of \emph{inserting} an element, \emph{decreasing} the key of a given element, and \emph{extracting} an element of minimum key from a priority queue containing at most~$n$ elements. We next claim:

\begin{theorem}\label{T-APSP-2}
If all edge weights are positive and all shortest paths are unique, then algorithm \apsp\ correctly finds all the shortest paths in the graph.
Algorithm \apsp\ runs in $O(n^2\cdot(T_{ins}(n^2)+T_{ext}(n^2)) + |\LSP|\cdot T_{dec}(n^2))$ time, where $|\LSP|$ is the number of LSPs in the graph, and uses only $O(n^2)$ space.
\end{theorem}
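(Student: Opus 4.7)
The plan is to verify correctness, bound the space, and then bound the running time of algorithm \apsp.

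For correctness, I would establish the Dijkstra-style invariant: at the moment $(u,v)$ is extracted from $Q$, the value $dist[u,v]$ equals the true distance from $u$ to $v$, and $p[u,v], q[u,v]$ are the second and penultimate vertices of the unique shortest path $u\SP v$. I would argue this by induction on extraction order. Suppose for contradiction that $(u,v)$ is the first extracted pair whose $dist[u,v]$ exceeds the true distance, and let $\pi\colon u=v_0\to v_1\to\cdots\to v_k=v$ be the true shortest path; then $k\ge 2$ since single-edge shortest paths are handled correctly by initialization. By uniqueness, both the prefix $v_0\SP v_{k-1}$ and the suffix $v_1\SP v_k$ are themselves shortest paths, each strictly shorter than $\pi$, so by the inductive hypothesis they were correctly extracted before $(u,v)$. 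Consider whichever of these two subpaths was extracted second: at that moment the endpoint of the other subpath had already been inserted into the appropriate $L$ or $R$ list. Hence the LSP $\pi$ was examined at that moment and $dist[u,v]$ was lowered to the length of $\pi$, contradicting the assumption.

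For the space bound, the matrices $dist, p, q$ occupy $\Theta(n^2)$ words. The lists $L$ and $R$ together use $O(n^2)$: each entry $w\in L[u,v]$ certifies a distinct shortest path $w\to u\SP v$, and one may charge the entry to the endpoint pair $(w,v)$, which has at most one shortest path by uniqueness. Hence $\sum_{u,v}|L[u,v]|\le n^2$ and similarly for $R$.

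For the running time, I would partition the work into four parts. First, initialization of the matrices and the $n^2$ initial insertions into $Q$ cost $O(n^2 + n^2\cdot T_{ins}(n^2))$. Second, since each pair is inserted into $Q$ at most once, there are at most $n^2$ extract-mins, contributing $O(n^2\cdot T_{ext}(n^2))$. Third, when $(u,v)$ is extracted, the algorithm enumerates $L[u,q[u,v]]$ and $R[p[u,v],v]$; each visited entry corresponds to an LSP extending $u\SP v$ by one edge on the left or right, and each LSP is visited at most twice across the entire execution, once through each of its two natural decompositions, so the total number of visits is $O(|\LSP|)$. Finally, each visit triggers at most one additional priority queue operation: either an \INSERT\ (of which the total remains $O(n^2)$, since each pair is inserted at most once) or a \decreasekey, of which there are at most $O(|\LSP|)$. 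Summing these contributions yields the claimed bound.

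The main obstacle is showing that the bookkeeping of the $L$ and $R$ lists maintains the invariant that $L[u,v]$ (resp.~$R[u,v]$) contains exactly those vertices $w$ such that $w\to u\SP v$ (resp.~$u\SP v\to w$) has already been confirmed as a shortest path. Given that the only updates to these lists occur by appending the first vertex $u$ to $L[p[u,v],q[u,v]]$ and the last vertex $v$ to $R[p[u,v],q[u,v]]$ immediately upon extracting $(u,v)$, the invariant follows by induction. Once it is in place, the LSP enumeration and the correctness argument above are both routine applications of the definitions.
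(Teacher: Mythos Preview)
Your argument is correct and follows the same route as the paper: correctness via induction on extraction order (the paper phrases it as taking a shortest shortest-path not found, but the content is identical), the time bound by charging work to LSP examinations, and an $O(n^2)$ space bound (the paper charges the two list insertions per extraction rather than charging list entries to endpoint pairs, but both work).

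Two small slips are worth fixing. First, each LSP of length at least $2$ is examined \emph{exactly once}, not up to twice: it is generated only when the \emph{later} of its two maximal shortest subpaths $l[\pi]$ and $r[\pi]$ is extracted, since at that moment the earlier one has already deposited the relevant endpoint into the appropriate $L$ or $R$ list, whereas at the earlier extraction the needed list entry is not yet present. Your bound still goes through, but the ``two decompositions'' picture is misleading. Second, in your last paragraph the list indices are wrong: upon extracting $(u,v)$ the algorithm inserts $u$ into $L[p[u,v],v]$ and $v$ into $R[u,q[u,v]]$, not into $L[p[u,v],q[u,v]]$ and $R[p[u,v],q[u,v]]$. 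With your indices the invariant you state would not hold.
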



The proof of Theorem~\ref{T-APSP-2}, which is essentially identical to the correctness proof given by Demetrescu and Italiano \cite{DeIt04a,DeIt06}, can be found in Appendix~\ref{S-static}.

If we use the Fibonacci heaps data structure (Fredman and Tarjan \cite{FrTa87}) that supports \extractmin\ operations in $O(\log n)$ amortized time, and all other operations in $O(1)$, amortized time, where $n$ is the number of elements in the heap, we get a running time of $O(n^2\log n + |\LSP|)$.
There are, thus, two
hurdles on our way to getting an expected $O(n^2)$-time algorithm. First, we have to show that $|\LSP|$ is $O(n^2)$, under natural probability distributions, in expectation and with high probability. We do that in Sections~\ref{S-LSP} and~\ref{S-high}. Second, we have to find a faster way of implementing heaps. We do that in Section~\ref{S-heap} using a \emph{bucket} based implementation.

\subsection{A dynamic version}\label{SS-dynamic}

The static algorithm of the previous section examines all locally shortest paths in a graph, but (implicitly) maintains only those that are currently shortest. The dynamic algorithm, on the other hand, explicitly maintains all locally shortest paths, even if they are already known not to be shortest paths.

For every path $\pi$, we let $l[\pi]$ be the path obtained by deleting the last edge of~$\pi$, and $r[\pi]$ be the path obtained by deleting the first edge of~$\pi$.
A path $\pi$ is represented by keeping its total cost, its first and last edges, and pointers to its subpaths $l[\pi]$ and $r[\pi]$. The collection of all paths maintained by the algorithm is referred to as the \emph{path system}.

For every pair of vertices $u,v\in V$, the dynamic algorithm maintains a heap $P[u,v]$ that holds all the LSPs connecting~$u$ and~$v$ found so far. The key of each path is its cost. As in the static case, $dist[u,v]$ is the cost of the shortest path $\pi[u,v]$ from~$u$ to~$v$ found so far.

For every LSP $\pi$, the dynamic algorithm maintains four lists of left and right extensions of~$\pi$.
The lists $SL[\pi]$ and $SR[\pi]$ contain left and right extensions of~$\pi$ that are known to be shortest paths. The lists $L[\pi]$ and $R[\pi]$ contain extensions of~$\pi$ that are known to be LSPs.

Let $E'$ be a set of edges whose costs are changed by an update operation. (We are mostly be interested in the case in which $E'$ is composed of a single edge, but the description below is general.) The dynamic algorithm recomputes all shortest paths as follows. First all LSPs containing edges of~$E'$ are removed from the path system. (Note that each edge of~$E'$ is an LSP, and is thus contained in the path system. All LSPs containing edges of~$E'$ can be found by recursively following the extension lists of these edges.)

For every pair of vertices $u,v\in V$ such that the shortest path from~$u$ to~$v$ before the update passes through an edge of~$E'$, and was therefore removed from the path system, the algorithm finds the cheapest path in~$P[u,v]$, if at least one such path remains, and assigns it to $\pi[u,v]$. It then inserts the pair $(u,v)$ into a global heap~$Q$. The key of $(u,v)$ in~$Q$ is the cost of $\pi[u,v]$. Next, it recreates single-edge paths corresponding to the edges of~$E'$, with their new edge weights, and examines them.

The dynamic algorithm now starts to construct new shortest paths. In each iteration it extracts from~$Q$ a pair $(u,v)$ with the smallest key. As in the static case, the path $\pi[u,v]$ is then a shortest path from~$u$ to~$v$. LSP extensions of $\pi[u,v]$, obtained by combining $\pi[u,v]$ with paths that are already known to be shortest paths, are now generated. If such an extension is shorter than the currently shortest available path containing its endpoints~$u'$ and $v'$, then $\pi[u',v']$ and $dist[u',v']$ are updated accordingly, and $(u',v')$ is inserted into~$Q$ with the appropriate key. (If $(u',v')$ is already in~$Q$, its key is decreased.)

An important difference between the dynamic variant used in this paper and the dynamic algorithm of Demetrescu and Italiano \cite{DeIt04a,DeIt06} is that when a path $\pi$ stops being a shortest path, it, and all its extensions, are immediately removed from the path system. A similar dynamic variant was used by Friedrich and Hebbinghaus \cite{FrHe08}. The algorithm of Demetrescu and Italiano \cite{DeIt04a,DeIt06} keeps such paths as \emph{historical} and \emph{locally historical} paths. (See also Demetrescu \etal \cite{DeFaItTh06}.)

The most impressive feature of the dynamic algorithm of Demetrescu and Italiano \cite{DeIt04a,DeIt06} is that its update time is proportional to the number of shortest and locally shortest paths that are destroyed and/or created by the update operation. The algorithm does not spend time on shortest paths that remain unchanged.

Let $\SPP^-$ and $\LSP^-$ be the sets of shortest and locally shortest paths destroyed by an update operation.
Similarly, let $\SPP^+$ and $\LSP^+$ be the sets of shortest and locally shortest paths that are created (or recreated)
by an update operation.
Note that $\SPP^-$ and $\SPP^+$, and $\LSP^-$ and $\LSP^+$, are not necessarily disjoint, as paths passing through edges of~$E'$ are first destroyed, and removed from the path system, but may then be recreated. Let $\Lambda$ be an upper bound on the number of LSPs that connect any given pair of vertices before and after the update.

A complete description of the dynamic variant sketched here and its correctness proof are given in Appendix~\ref{S-dynamic}, where the following theorem is proved. ($\update(E',c')$ is the function that updates all shortest paths following a change in the costs of the edges of~$E'$.)

\begin{theorem}\label{T-dynamic} The running time of $\update(E',c')$ is
$$O(\; |\SPP^-|\cdot (T_{del}(\Lambda)+T_{min}(\Lambda)+T_{ins}(n^2)) +
|\SPP^+|\cdot T_{ext}(n^2) +
|\LSP^-|\cdot T_{del}(\Lambda) +
|\LSP^+|\cdot(T_{ins}(\Lambda)+T_{dec}(n^2) \;).$$
\end{theorem}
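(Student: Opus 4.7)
The plan is to bound the cost of $\update(E',c')$ by charging every primitive heap operation to exactly one of the four terms in the statement, splitting the analysis into a destruction phase which purges from the path system all paths affected by $E'$, and a Dijkstra-style reconstruction phase which rebuilds the new shortest paths using the global heap $Q$.

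First I would show that the set $\LSP^-$ can be enumerated by recursively following the doubly linked $L$ and $R$ extension lists starting from the single-edge LSPs corresponding to the edges of $E'$. Each visit to a $\pi\in\LSP^-$ incurs one deletion from the associated heap $P[u,v]$ at cost $T_{del}(\Lambda)$, accounting for the $|\LSP^-|\cdot T_{del}(\Lambda)$ term. Then, for every pair $(u,v)\in\SPP^-$, I would perform one deletion from the $SL/SR$ lists and from $Q$ if the pair was already enqueued (cost $T_{del}(\Lambda)$), one find-min call on $P[u,v]$ to recover a surviving tentative path (cost $T_{min}(\Lambda)$), and one insertion of $(u,v)$ into $Q$ with the new tentative cost (cost $T_{ins}(n^2)$). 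Summing over $\SPP^-$ gives the first term of the bound, and the single-edge paths generated for $E'$ with their new costs are absorbed into the last term below.

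The reconstruction phase repeatedly extracts a minimum-key pair $(u,v)$ from $Q$; a standard Dijkstra invariant argument, analogous to the one used for algorithm $\apsp$ in Theorem~\ref{T-APSP-2}, shows that each extraction correctly identifies exactly one member of $\SPP^+$, contributing $|\SPP^+|\cdot T_{ext}(n^2)$ in total. Upon extraction the algorithm scans the $SL$ and $SR$ lists of the subpaths $l[\pi[u,v]]$ and $r[\pi[u,v]]$ to generate all LSP extensions of $\pi[u,v]$; each generated extension $\pi'\in\LSP^+$ is inserted into $P[u',v']$ at cost $T_{ins}(\Lambda)$ and, if $\pi'$ improves the current tentative cost at $(u',v')$, triggers a decrease-key in $Q$ at cost $T_{dec}(n^2)$, yielding the final $|\LSP^+|\cdot(T_{ins}(\Lambda)+T_{dec}(n^2))$ term.

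The main obstacle I expect is proving that the total work is truly proportional to $|\SPP^-|+|\SPP^+|+|\LSP^-|+|\LSP^+|$ rather than to, say, the total number of LSPs in the old or new path system. This reduces to two structural invariants: first, that the destruction traversal visits each member of $\LSP^-$ a constant number of times and never touches a surviving LSP, which holds because each LSP appears on exactly two extension lists (those of $l[\pi]$ and $r[\pi]$) and destroyed paths are removed eagerly from these lists; second, that every candidate extension produced during reconstruction is genuinely a member of $\LSP^+$, which follows from the fact that any LSP whose two subpaths both survive destruction unchanged is itself preserved in $P[u,v]$ and its pair is never re-extracted from $Q$. Both invariants rest on the eager-removal property of this variant, in contrast to the historical-paths bookkeeping used by Demetrescu and Italiano; once they are verified along the lines of the standard correctness argument relegated to Appendix~\ref{S-dynamic}, the charging above becomes immediate.
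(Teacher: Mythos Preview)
Your proposal is correct and follows essentially the same charging argument as the paper's proof in Appendix~\ref{S-dynamic}: account separately for the local-heap and global-heap operations triggered by each destroyed or created shortest path and LSP, and observe that the eager-removal variant guarantees the destruction traversal and the reconstruction loop each touch only the relevant sets. Your discussion of the two structural invariants is in fact more explicit than the paper, which simply asserts the charging after describing the pseudocode. One small inaccuracy: the $T_{del}(\Lambda)$ you attribute to ``deletion from the $SL/SR$ lists and from~$Q$'' is misplaced, since $SL/SR$ are plain linked lists with $O(1)$ removal and~$Q$ is freshly created by \update; in the paper's accounting that $T_{del}(\Lambda)$ term for $\SPP^-$ comes from the $\heapdelete(P[u,v],\pi)$ call inside $\removepath$ when the destroyed path happens to be a shortest path (which is harmless double-counting since $\SPP^-\subseteq\LSP^-$).
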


\vspace*{-1pt}
Here, $T_{ins}(n),T_{del}(n),T_{dec}(n),T_{ext}(n)$ and $T_{min}(n)$ are the (amortized) times of inserting, deleting, decreasing the key, extracting the element of minimum key, and finding the element of minimum key of a priority key containing at most~$n$ elements.

We show in Section~\ref{S-update} that for a random edge update we have $\Exp[\,|\SPP^-|\,],\Exp[\,|\SPP^+|\,]=O(\log n)$ and that $\Exp[\,|\LSP^-|\,],\Exp[\,|\LSP^+|\,]=O(\log^2 n)$. We also show that $\Lambda=O(\log n)$, with high probability. Using appropriate implementations of the priority queues, we get an expected edge update time of $O(\log^2 n)$.

\section{Distances in complete randomly weighted graphs}\label{S-DIST}

Let $K_n=(V,E)$ be a complete directed graph on~$n$ vertices
and let $a,b\in V$. We let $W(a,b)$ be the random weight attached to the edge $(a,b)$. We assume at first that $W(a,b)$ is an \emph{exponential} random variable with mean~$1$, i.e., $W(a,b)\sim EXP(1)$. Due to the memoryless property, dealing with exponentially distributed edge weights is easier than dealing directly with uniformly distributed edge weights. We later explain why all the results derived in this section for exponential edge weights also hold, asymptotically, for uniformly distributed edge weights.
All $n(n-1)$ random edge weights are assumed to be independent. (Self-loops, if present, may be ignored.) Let $D(a,b)$ be the \emph{distance} from~$a$ to~$b$ in the graph, i.e., the length (sum of weights) on the shortest path $a\SP b$ in the graph. (The shortest path $a\SP b$ is unique with probability~$1$.) Note that $D(a,b)$ is now also a random variable. For $k\in \{1,2,\ldots,n-1\}$, we let $D_k(a)$ be the distance from $a$ to the $k$-th closest vertex to~$a$.

Let $H_k = \sum_{k=1}^n \frac{1}{k}$ be the $k$-th \emph{Harmonic number}. It is known that
$H_n = \ln n + \gamma + O(\frac{1}{n}),$
where $\gamma=0.57721\ldots$ is Euler's constant.

The following five lemmas can be found in Janson \cite{Janson99}. (The expectation of $D(a,b)$, but not the variance, can also be found in Davis and Prieditis  \cite{DaPr93}). The lemmas in \cite{Janson99} are stated for undirected graphs, but it is easy to check that they also hold for directed graphs.

\begin{lemma}\label{L-Dk} Let $a\in V$ and $k\in \{1,2,\ldots,n-1\}$. Then, \[D_k(a) \;=\; \sum_{i=1}^k \frac{X_i}{i(n-i)}\;,\] where $X_1,X_2,\ldots,X_{k}$ are i.i.d.\ exponential variables with mean~$1$.
\end{lemma}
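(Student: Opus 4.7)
I would prove Lemma~\ref{L-Dk} by running Dijkstra's algorithm from $a$ and analyzing the increments between successive discoveries, exploiting the memoryless property of the exponential distribution. Let $a=v_0,v_1,v_2,\ldots,v_{n-1}$ be the vertices in the order Dijkstra discovers them, and set $D_i(a)=D(a,v_i)$, $D_0(a)=0$. Writing $T_i = D_i(a)-D_{i-1}(a)$, the statement we want is equivalent to showing that $T_1,\ldots,T_{n-1}$ are independent with $T_i \sim EXP(i(n-i))$.

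The key step is the following memoryless observation. Freeze the algorithm at the moment just after the $(i-1)$-st new vertex is discovered, so the set of discovered vertices is $S_i=\{v_0,\ldots,v_{i-1}\}$ with known distances $D_0(a),\ldots,D_{i-1}(a)$. For every $v\in S_i$ and every undiscovered $w\notin S_i$, the edge weight $W(v,w)$ has not yet been used except through the inequality $W(v,w) > D_{i-1}(a)-D(a,v)$ (otherwise $w$ would already be in $S_i$). By the memoryless property of $EXP(1)$, conditional on the history up to this point, the residual weights
\[
\widetilde W(v,w) \;:=\; W(v,w) - \bigl(D_{i-1}(a)-D(a,v)\bigr), \qquad v\in S_i,\; w\notin S_i,
\]
are i.i.d.\ $EXP(1)$ random variables, independent of everything observed so far.

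Now the time (past $D_{i-1}(a)$) at which undiscovered vertex $w$ will first be reached is $\min_{v\in S_i}\widetilde W(v,w)$, the minimum of $i$ independent $EXP(1)$ variables, which is $EXP(i)$. The next discovery time $T_i$ is the minimum over the $n-i$ undiscovered vertices of these per-vertex minima, hence
\[
T_i \;=\; \min_{\substack{v\in S_i\\ w\notin S_i}} \widetilde W(v,w) \;\sim\; EXP\bigl(i(n-i)\bigr).
\]
Equivalently, $T_i = X_i/(i(n-i))$ where $X_i \sim EXP(1)$.

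Finally, the independence of the $T_i$'s follows from iterating the argument: conditional on the history through step $i-1$ (which determines $S_i$ and the previous increments $T_1,\ldots,T_{i-1}$), the residual weights $\widetilde W(v,w)$ are fresh i.i.d.\ $EXP(1)$'s, so $T_i$ is independent of the past and has the claimed distribution. Summing $D_k(a)=\sum_{i=1}^{k}T_i$ gives the formula. The only subtle point is justifying the conditional-independence claim cleanly; this is standard for Poissonian/exponential exploration processes, and can be formalized by showing that the joint density of $(W(v,w))_{v\in S_i,w\notin S_i}$ given the history factors as the required product of shifted exponentials.
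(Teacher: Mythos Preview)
Your argument is correct and is essentially the standard proof (the one given by Janson~\cite{Janson99}). The paper itself does not prove Lemma~\ref{L-Dk}; it simply attributes it, together with the four lemmas that follow, to Janson~\cite{Janson99}. So there is no ``paper's own proof'' to compare against beyond that citation, and your Dijkstra-exploration / memoryless argument is exactly the intended one.
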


\begin{lemma}\label{L-Dab} Let $a\ne b\in V$. Then, \[D(a,b) \;=\; D_L(a) \;=\; \sum_{i=1}^L \frac{X_i}{i(n-i)}\;,\] where $X_1,X_2,\ldots,X_{n-1}$ are i.i.d.\ exponential variables with mean~$1$, and $L$ is chosen uniformly at random from $\{1,2,\ldots,n-1\}$.
\end{lemma}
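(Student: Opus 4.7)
The plan is to deduce Lemma~\ref{L-Dab} from Lemma~\ref{L-Dk} via a symmetry argument. Run a conceptual Dijkstra from $a$ and let $v_1,v_2,\ldots,v_{n-1}$ denote the vertices of $V\setminus\{a\}$ in the order they are settled, i.e., in increasing order of distance from $a$. Then $D(a,v_k)=D_k(a)$, and Lemma~\ref{L-Dk} already gives the representation $D_k(a)=\sum_{i=1}^k X_i/(i(n-i))$ with $X_1,\ldots,X_{n-1}$ i.i.d.\ $EXP(1)$. So, for fixed $b\neq a$, the entire content of the lemma is to identify the distribution of the (random) rank $L$ with $b=v_L$ and to verify that $L$ is independent of the $X_i$'s.

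The key step is therefore to show that the random permutation $(v_1,\ldots,v_{n-1})$ of $V\setminus\{a\}$ is uniformly distributed and independent of $(X_1,\ldots,X_{n-1})$. The cleanest way is to reveal both sequences jointly, stage by stage, using the memoryless property. At stage $i$, condition on the first $i$ settled vertices. By memorylessness, the residual weights on the $i(n-i)$ frontier edges (from the settled set to the unsettled set) are conditionally i.i.d.\ $EXP(1)$. Hence the minimum of these residuals equals $X_i/(i(n-i))$ with $X_i\sim EXP(1)$, the edge achieving this minimum is uniform over the $i(n-i)$ frontier edges independently of $X_i$, and both are independent of the past. The vertex settled at stage $i+1$ is the unsettled endpoint of that edge, and by a standard exchangeability argument it is uniform over the remaining unsettled vertices independently of all the $X_j$ revealed so far. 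Iterating, $(v_1,\ldots,v_{n-1})$ is a uniformly random permutation, and is jointly independent of $(X_1,\ldots,X_{n-1})$.

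Given this, fix any $b\in V\setminus\{a\}$ and set $L$ to be its rank, i.e., $b=v_L$. Since $(v_1,\ldots,v_{n-1})$ is uniform on permutations of $V\setminus\{a\}$, the position $L$ of any specific element $b$ is uniform on $\{1,\ldots,n-1\}$; and since the permutation is independent of $(X_i)$, so is $L$. Therefore
\[
D(a,b)\;=\;D_L(a)\;=\;\sum_{i=1}^{L}\frac{X_i}{i(n-i)},
\]
which is exactly the stated identity.

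The main obstacle is the joint independence claim, not the marginal distribution of $L$; uniformity of the rank is immediate by vertex symmetry, but decoupling $L$ from the entire increment sequence $(X_i)$ requires the inductive memoryless argument above. Once that is in place, the conclusion is automatic.
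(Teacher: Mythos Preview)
The paper does not actually prove this lemma; it is one of five results quoted from Janson~\cite{Janson99} without proof. Your argument is correct and is essentially the standard one (also the one in Janson): by memorylessness, at each stage the edge attaining the minimum residual is uniform over the $i(n-i)$ frontier edges and independent of the value of the minimum and of the past; since each of the $n-i$ unsettled vertices is the head of exactly $i$ frontier edges, the next settled vertex is uniform over them. Iterating, the discovery order is a uniformly random permutation of $V\setminus\{a\}$ independent of $(X_1,\ldots,X_{n-1})$, and the uniformity of $L$ together with its independence from the $X_i$'s follow at once. Your phrase ``by a standard exchangeability argument'' is doing the work of this one-line counting observation; spelling it out would make the proof fully self-contained, but the reasoning is sound as written.
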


\begin{lemma}\label{L-D} Let $a\ne b\in V$. Then,
$$\Exp[D(a,b)]\;=\;\frac{H_{n-1}}{n-1}\;=\; \frac{\ln n}{n} + O(\frac{1}{n})\quad,\quad
\Var[D(a,b)]\;=\;\frac{\pi^2}{2n^2} + o(\frac{1}{n^2}).$$
\end{lemma}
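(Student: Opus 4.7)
The plan is to derive both estimates from the representation given by Lemma~\ref{L-Dab}, namely $D(a,b) = \sum_{i=1}^L X_i/(i(n-i))$ with $X_1,\ldots,X_{n-1}$ i.i.d.\ standard exponentials and $L$ independent of the $X_i$'s and uniform on $\{1,\ldots,n-1\}$.

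For the expectation, I would condition on $L$ and use linearity to get
$$\Exp[D(a,b)] \;=\; \frac{1}{n-1}\sum_{k=1}^{n-1}\sum_{i=1}^k \frac{1}{i(n-i)}\;,$$
then swap the order of summation: the inner sum over $k\in\{i,\ldots,n-1\}$ contributes a factor of $n-i$, which cancels the $n-i$ in the denominator and leaves $\frac{1}{n-1}\sum_{i=1}^{n-1}\frac{1}{i} = H_{n-1}/(n-1)$. Plugging in $H_{n-1}=\ln n + \gamma + O(1/n)$ yields $(\ln n)/n + O(1/n)$.

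For the variance, I would use the law of total variance,
$$\Var[D(a,b)] \;=\; \Exp\!\bigl[\Var[D(a,b)\mid L]\bigr] \;+\; \Var\!\bigl[\Exp[D(a,b)\mid L]\bigr].$$
Conditional on $L=k$, the $X_i$'s are independent with unit variance, so $\Var[D(a,b)\mid L=k]=\sum_{i=1}^k 1/(i(n-i))^2$. A partial-fraction expansion $1/(i^2(n-i))=1/(n^2i)+1/(ni^2)+1/(n^2(n-i))$ (applied after one more telescoping step) shows that the first term on the right is $\pi^2/(6n^2) + o(1/n^2)$: the dominant contribution comes from the $1/(ni^2)$ piece, which summed gives $\pi^2/(6n)$, further divided by $n-1$. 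For the second term I would use the partial fraction $1/(i(n-i))=\frac{1}{n}(1/i+1/(n-i))$ to write the conditional mean as $f(k):=\Exp[D(a,b)\mid L=k] = (H_k+H_{n-1}-H_{n-1-k})/n$. As $L/(n-1)$ converges to a uniform random variable $U$ on $(0,1)$, a direct computation gives $n\,f(L) - \ln n \to \ln(U/(1-U))$, which has the standard logistic distribution with variance $\pi^2/3$. A routine uniform estimate on the tails of the harmonic asymptotic controls the remainder, yielding $\Var[f(L)]=\pi^2/(3n^2)+o(1/n^2)$. Summing the two contributions gives the claimed $\pi^2/(2n^2)+o(1/n^2)$.

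The main obstacle is the asymptotic bookkeeping near the endpoints $i\approx 0$ and $i\approx n$: the terms with $i$ of constant order contribute the leading $\pi^2$ constants, while the bulk $i\asymp n$ contributes only $O((\ln n)/n^3)$, and one must be careful that the logistic-distribution identification of $\Var[f(L)]$ holds uniformly enough for the error terms to be genuinely $o(1/n^2)$ rather than merely $O(1/n^2)$. Apart from that, every step is a mechanical computation with harmonic numbers and partial fractions, and the result is essentially the one already recorded by Janson~\cite{Janson99}.
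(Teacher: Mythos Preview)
Your computation is correct. The paper does not actually prove this lemma; it is listed among the five results quoted without proof from Janson~\cite{Janson99}. Your derivation---expectation via swapping the double sum, variance via the law of total variance, with the first piece coming from the partial-fraction expansion of $1/(i^2(n-i))$ and the second identified as the variance $\pi^2/3$ of the standard logistic distribution---is exactly the natural route and is essentially what Janson does. The only point that needs a sentence or two of care, which you already flag, is that the convergence in distribution of $n f(L)-\ln n$ to the logistic must be upgraded to convergence of second moments; this follows from a direct bound on $\Exp[(n f(L)-\ln n)^2 \mathbf{1}_{\{L\le \epsilon n\text{ or }L\ge (1-\epsilon)n\}}]$ using $H_k=\ln k+O(1)$.
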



\begin{lemma}\label{L-max} For any constant $c>3$, we have $\displaystyle \Pr\left[\max_{a,b}D(a,b) \ge \frac{c\ln n}{n}\right] = O(n^{3-c}\log^2 n).$
\end{lemma}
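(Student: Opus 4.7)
My plan is to first observe that $\max_b D(a,b) = D_{n-1}(a)$ (the distance from $a$ to its farthest vertex), so $\max_{a,b} D(a,b) = \max_a D_{n-1}(a)$, and a union bound over $a$ reduces the claim to
\[
\Pr\!\left[D_{n-1}(a) \ge \frac{c\ln n}{n}\right] = O(n^{2-c}\log^2 n)
\]
for any fixed $a$. The tail bound I would establish by a Chernoff argument using the explicit representation of Lemma~\ref{L-Dk}: since $D_{n-1}(a) = \sum_{i=1}^{n-1} X_i/(i(n-i))$ with i.i.d.\ $X_i\sim EXP(1)$, independence gives
\[
\Exp[e^{tD_{n-1}(a)}] \;=\; \prod_{i=1}^{n-1}\frac{i(n-i)}{i(n-i)-t}
\]
for any $t < \min_i i(n-i) = n-1$.

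The key algebraic observation, which is what makes this MGF product manageable, is the factorization
\[
i(n-i)-(n-1) \;=\; (i-1)(n-1-i).
\]
Writing $t=(n-1)(1-\epsilon)$, the $i=1$ and $i=n-1$ factors in the MGF each reduce to $1/\epsilon$, while for $2\le i\le n-2$ the $i$-th factor is at most $\frac{i(n-i)}{(i-1)(n-1-i)} = \frac{i}{i-1}\cdot\frac{n-i}{n-1-i}$. Each of the two resulting products telescopes to $n-2$, so the middle factors contribute at most $(n-2)^2$ in total, giving
\[
\Exp[e^{tD_{n-1}(a)}] \;\le\; (n-2)^2/\epsilon^2.
\]
Markov then yields $\Pr[D_{n-1}(a) \ge c\ln n/n] \le e^{-(n-1)(1-\epsilon)c\ln n/n}\cdot (n-2)^2/\epsilon^2$.

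I would then tune $\epsilon = 1/(c\ln n)$. This is the balance point: the ``loss'' $n^{c\epsilon}$ incurred by moving $t$ away from $n-1$ is exactly the constant $e$, while $1/\epsilon^2 = (c\ln n)^2$ contributes the extra $\log^2 n$. Combining with the leading $n^{-c}$ from the exponential and the $(n-2)^2$ from the telescoping product yields $O(n^{2-c}\log^2 n)$ per vertex, and the final union bound over the $n$ choices of $a$ gives the claimed $O(n^{3-c}\log^2 n)$. The one non-obvious step is spotting the factorization $i(n-i)-(n-1)=(i-1)(n-1-i)$, which tames the otherwise explosive endpoints $i=1,n-1$ of the MGF product; once it is in hand, the choice of $\epsilon$ is essentially forced and the rest is routine bookkeeping.
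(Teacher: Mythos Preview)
Your proof is correct. The paper does not actually prove this lemma; it is one of five results that the paper attributes to Janson \cite{Janson99} without reproducing the argument. Your Chernoff/MGF computation, exploiting the factorization $i(n-i)-(n-1)=(i-1)(n-1-i)$ to telescope the middle product and isolating the two boundary factors $i=1,n-1$ as $1/\epsilon$ each, is precisely the standard route to this bound (and is essentially the argument Janson gives). The choice $\epsilon=1/(c\ln n)$ is the correct balance, and the bookkeeping --- $n^{c/n}=O(1)$, $n^{c\epsilon}=e$, $1/\epsilon^2=(c\ln n)^2$, telescoping product $=(n-2)^2$ --- all checks out, yielding $O(n^{2-c}\log^2 n)$ per source vertex and hence $O(n^{3-c}\log^2 n)$ after the union bound over~$a$.
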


\begin{lemma}\label{L-edge} Let $a\ne b\in V$. Then, the probability that the edge $a\to b$ is a shortest path is $\frac{H_{n-1}}{n-1}=\frac{\ln n}{n}+O(\frac{1}{n})$.
\end{lemma}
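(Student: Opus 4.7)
The edge $a\to b$ is a shortest path precisely when $b$ is an immediate child of $a$ in the shortest-path tree $T_a$ rooted at $a$. Since the $n-1$ vertices other than $a$ are exchangeable under the exponential model,
\[
\Pr[a\to b\text{ is a shortest path}]\;=\;\frac{\Exp[\Delta_a]}{n-1},
\]
where $\Delta_a$ denotes the number of children of $a$ in $T_a$. It therefore suffices to show $\Exp[\Delta_a]=H_{n-1}$.

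To compute this expectation, I would invoke the correspondence, recalled in the introduction, between $T_a$ and a random recursive tree on $n$ vertices with root $a$. Label the remaining vertices as $v_1,\ldots,v_{n-1}$ in the order in which they are added to $T_a$ by Dijkstra, so that $v_k$ attaches to a uniformly chosen parent among $\{a,v_1,\ldots,v_{k-1}\}$. Then $\Pr[v_k\text{ is a child of }a]=1/k$, and summing the indicators yields $\Exp[\Delta_a]=\sum_{k=1}^{n-1}1/k=H_{n-1}$.

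Dividing by $n-1$ gives the claimed probability, and $H_{n-1}=\ln n+O(1)$ supplies the asymptotic estimate. A self-contained derivation of the recursive-tree fact would proceed by the same memoryless-exponential argument that underlies Lemma~\ref{L-Dk}: just after $k$ vertices sit in the Dijkstra tree, the $k(n-k)$ cross-edges have i.i.d.\ residual lengths, so the next edge to fire is uniform among them, and it emanates from $a$ with probability $(n-k)/(k(n-k))=1/k$. The only delicate point is this memoryless/symmetry step; once granted, the rest is a one-line harmonic sum. Finally, the conclusion for \emph{uniform} edge weights, rather than exponential ones, is inherited by the asymptotic-equivalence remark made at the beginning of Section~\ref{S-DIST}.
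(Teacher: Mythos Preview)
Your proof is correct. The paper does not actually prove this lemma; it is one of five results quoted from Janson~\cite{Janson99} without proof. Your argument via the random-recursive-tree description of the shortest-path tree from~$a$ is precisely the standard one, and is in fact the mechanism the paper itself alludes to in the introduction when it states that ``the tree of shortest paths from a given vertex has the same distribution as a random recursive tree on~$n$ vertices.'' The exchangeability reduction to $\Exp[\Delta_a]/(n-1)$ and the harmonic sum $\Exp[\Delta_a]=\sum_{k=1}^{n-1}1/k$ are both clean and correct, and your sketch of the memoryless step (that the $k(n-k)$ residual offers are i.i.d.\ exponentials, so the next parent is uniform on the $k$ current tree vertices) is exactly the justification underlying Lemma~\ref{L-Dk}.
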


The next two lemmas are new and might be interesting in their own right. They are used in Section~\ref{S-high} to show that the running time of algorithm \apsp\ is $O(n^2)$ with high probability. The proof that the expected running time of \apsp\ is $O(n^2)$, given in Section~\ref{S-LSP}, does not rely on them.

\begin{lemma}\label{L-tail} Let $a\ne b\in V$. If $n^{-\alpha}<\alpha \leq 1/2$, then $\Pr[ D(a,b)>(1+12\alpha)\frac{\ln n}{n}] \le 5n^{-\alpha}$.
\end{lemma}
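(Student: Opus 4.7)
My plan is to use Lemma~\ref{L-Dab} to write
\[\Pr[D(a,b)>t] \;=\; \frac{1}{n-1}\sum_{L=1}^{n-1}\Pr[D_L(a)>t],\]
with $t=(1+12\alpha)\ln n/n$, and to split the outer sum at a cutoff $L_0\approx n-1-n^{1-11\alpha}$. The reason for this cutoff is that, by Lemma~\ref{L-Dk} together with $H_k=\ln k+\gamma+O(1/k)$, one has $\Exp[D_L]=(H_L+H_{n-1}-H_{n-L-1})/n$, and $\Exp[D_L]>t$ only once $L/(n-L-1)$ exceeds roughly $n^{12\alpha}$, i.e.\ only for $L>L_0$. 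For such ``tail'' values of $L$ I will simply use $\Pr[D_L>t]\le 1$; their contribution to the average is at most $(n-1-L_0)/(n-1)=O(n^{-11\alpha})$, which is easily dominated by $n^{-\alpha}$.

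For $L\le L_0$ I apply Chernoff to the moment generating function
\[\Exp[e^{\theta D_L}] \;=\; \prod_{i=1}^L \frac{1}{1-\theta/(i(n-i))},\]
taking $\theta=(n-1)(1-\eta)$ with a small parameter $\eta>0$. The crucial algebraic identity
\[i(n-i)-(n-1) \;=\; (i-1)(n-i-1)\]
rewrites each factor as $1-\theta/(i(n-i))=[(i-1)(n-i-1)+(n-1)\eta]/[i(n-i)]$. The $i=1$ term contributes $\ln(1/\eta)$, while for $i\ge 2$ the correction $(n-1)\eta$ is negligible against $(i-1)(n-i-1)$ and the remaining logarithmic factors telescope. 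This yields
\[\Pr[D_L>t]\;\le\;\frac{L(n-2)}{\eta(n-L-1)}\cdot e^{-(n-1)(1-\eta)t}.\]
Choosing $\eta$ of order $1/\ln n$ makes $e^{-(n-1)(1-\eta)t}\le O(1)\cdot n^{-(1+12\alpha)}$. Summing this bound over $L\le L_0$, the sum $\sum_L L/(n-L-1)$ is dominated by the terms close to $L_0$ and is of order $\alpha(n-1)\ln n$; after dividing by $n-1$ the bulk contribution is $O(\alpha\ln^2 n)\cdot n^{-12\alpha}=O(\alpha\ln^2 n\cdot n^{-11\alpha})\cdot n^{-\alpha}$.

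The hypothesis $n^{-\alpha}<\alpha$ (which essentially forces $\alpha\gtrsim \ln\ln n/\ln n$) is then used to verify that the prefactor $\alpha\ln^2 n\cdot n^{-11\alpha}$ is bounded, so the bulk contribution is $O(n^{-\alpha})$. Combining with the $O(n^{-11\alpha})$ from the tail, and handling separately the single case $L=n-1$ (in which the $i=n-1$ term mirrors the $i=1$ endpoint by the symmetry $i\mapsto n-i$ and produces another factor of $1/\eta$, but contributes only $\le 1/(n-1)\le n^{-\alpha}$ on average), completes the bound.

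The main obstacle is tracking constants well enough to land on the specific bound $5n^{-\alpha}$: the parameters $\eta$ and $L_0$ must be tuned carefully, and one must be attentive to the $O(1/k)$ corrections in $H_k$ when identifying the precise cutoff at which $\Exp[D_L]$ overshoots $t$. A secondary obstacle is checking that, for $2\le i\le L$, the additive $(n-1)\eta$ inside the factorization is indeed negligible against $(i-1)(n-i-1)$; this fails near $i=1$ and near $i=L$ (when $L=n-1$), which is exactly why those endpoint terms are treated separately.
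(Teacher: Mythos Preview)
Your approach is correct and gives $\Pr[D(a,b)>(1+12\alpha)\frac{\ln n}{n}]=O(n^{-\alpha})$, which is all that is used downstream (the paper explicitly notes that no attempt was made to optimize the constant~$5$). The Chernoff bound with $\theta=(n-1)(1-\eta)$ is legitimate since the MGF exists for $\theta<n-1$, and your telescoping via $i(n-i)-(n-1)=(i-1)(n-i-1)$ is valid as an \emph{upper} bound on the MGF once you drop the positive correction $(n-1)\eta$ from the denominators for $i\ge 2$. Your use of the hypothesis $n^{-\alpha}<\alpha$ to bound $\alpha\ln^2 n\cdot n^{-11\alpha}$ is also correct: writing $\beta=\alpha\ln n$, the hypothesis gives $\beta>\ln(1/\alpha)$ and $1/\alpha<e^{\beta}$, whence $\alpha\ln^2 n\cdot n^{-11\alpha}=\beta^2 e^{-11\beta}/\alpha\le \beta^2 e^{-10\beta}$, which is bounded.

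That said, the paper proceeds quite differently. Rather than averaging $\Pr[D_L>t]$ over $L$ and applying Chernoff with a single tuned $\theta$, it first disposes of the event $L>n-m$ (with $m=n^{1-\alpha}$) at probability $n^{-\alpha}$, and then splits the \emph{sum} $S_{1,n-m}$ into four pieces $\frac{X_1}{n-1}$, $S_{2,m}$, $S_{m,n/2}$, $S_{n/2,n-m}$, bounding each piece above its allotted share of $(1+12\alpha)\frac{\ln n}{n}$ (namely $2\alpha$, $1+2\alpha$, $4\alpha$, $4\alpha$ times $\frac{\ln n}{n}$) with probability at most $n^{-\alpha}$ each, using elementary MGF computations such as $\Exp[e^{\sum_{i=2}^m X_i/i}]=\prod_{i=2}^m(1-1/i)^{-1}=m$. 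This yields exactly five events of probability $\le n^{-\alpha}$ and hence the clean constant~$5$. Your single-$\theta$ telescoping argument is more unified and algebraically elegant, but pays for that by introducing the auxiliary parameter $\eta\sim 1/\ln n$ and a constant of order $e^{O(1)}$ in $e^{-\theta t}\le Cn^{-(1+12\alpha)}$, which makes landing on the specific~$5$ genuinely awkward (as you anticipate). For the purposes of the paper either argument suffices.
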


\begin{proof} Let $S_{k,\ell} = \sum_{i=k}^\ell \frac{X_i}{i(n-i)}$.
(We allow $k,\ell$ to be non-integral, in which case, we have $S_{k,\ell}=S_{\lceil k\rceil,\lfloor \ell \rfloor}$.)
By Lemma~\ref{L-Dab} we get that $D(a,b) = S_{1,L}$, where $L$ is uniformly distributed in $\{1,2,\ldots,n-1\}$. Let $m=n^{1-\alpha}$. We clearly have
\[ \Pr[D(a,b)>S_{1,n-m}] \;=\; \Pr[L>n-m] \;\le\; m/n \;=\; n^{-\alpha}.\eqno{(1)}\]
We now decompose
\[ S_{1,n-m} \;\le\; \frac{X_1}{n-1} + S_{2,m} + S_{m,n/2} + S_{n/2,n-m}. \eqno{(2)}\]
%
Now
\[ 
\Pr\left[\frac{X_1}{n-1}>2\alpha\frac{\ln n}{n}\right] \;\le\; \Pr[X_1>\alpha\ln n] \;=\; n^{-\alpha}.\eqno{(3)}\]
Let $Y=\sum_{i=2}^m\frac{X_i}{i}$. Using our assumption that $n^{-\alpha}<\alpha$ we get that
\[ 
S_{2,m} \;\le\; \frac{Y}{n-m} \;=\; \frac{Y}{(1-n^{-\alpha})n}
\;\le\;  \frac{Y}{(1-\alpha)n}\;\le\; (1+2\alpha)\frac{Y}{n}.\]
Now $\Exp[\ee^{\lambda X_i}]=(1-\lambda)^{-1}$, for $\lambda<1$, so
\[ 
\Exp\left[\ee^Y\right] \;=\;
\Exp\left[\ee^{\sum_{i=2}^m X_i/i}\right] \;=\; \prod_{i=2}^m \Exp\left[\ee^{X_i/i}\right] \;=\;
\prod_{i=2}^{m} \left(1-\frac{1}{i}\right)^{-1} \;=\; \prod_{i=2}^m \frac{i}{i-1} \;=\; m.\]
Therefore,
\[ \Pr\left[S_{2,m-1}\ge (1+2\alpha)\frac{\ln n}{n}\right] \;\le\; \Pr[Y\ge \ln n] \;=\; \Pr[\ee^Y\ge n]
\;\le\; \frac{\Exp[\ee^Y]}{n} \;=\; \frac{m}{n} \;=\; n^{-\alpha}. \eqno{(4)}\]
Let $Z=\sum_{i=m}^{n/2} \frac{X_i}{i}$. Clearly, $S_{m,n/2}\le \frac{2Z}{n}$. We again have
\[ \Exp[\ee^Z] \;=\; \prod_{i=m}^{n/2} (1-\frac{1}{i})^{-1} \;\le\; \frac{n}{m} \;=\; n^\alpha. \]
Therefore,
\[ 
\Pr\left[S_{m,n/2} \ge 4\alpha\frac{\ln n}{n}\right] \;\le\; \Pr[Z>2\alpha\ln n] \;=\; \Pr[ \ee^Z\ge n^{2\alpha}] \;\le\;
\frac{\Exp[\ee^Z]}{n^{2\alpha}} \;=\;
 n^{-\alpha}. \eqno{(5)} \]
As $S_{m,n/2}$ and $S_{n/2,n-m}$ have exactly the same distribution, we also get that
\[ 
\Pr\left[S_{n/2,n-m} \ge 4\alpha\frac{\ln n}{n}\right] \le n^{-\alpha} \eqno{(6)} \]
Using (1)-(6) together, we get that $\Pr\left[D(a,b)>(1+12\alpha)\frac{\ln n}{n}\right] \le 5n^{-\alpha}$, we required.
\end{proof}

No attempt to optimize the constants appearing the statement of Lemma~\ref{L-tail}. The condition $n^{-\alpha}<\alpha$ in the lemma is satisfied for any fixed $\alpha>0$, when $n$ is large enough. It also holds when, say, $\alpha=\alpha(n)=(\ln\ln n)/\ln n$.

The proof of our next lemma relies on the following large deviation theorem of Maurer \cite{Maurer03}.

\begin{theorem}[Maurer \cite{Maurer03}]\label{T-Maurer}
Let $Y_1,Y_2,\ldots,Y_n$ be non-negative independent random variables with finite first and second moments and let $S=\sum_{i=1}^n Y_i$. Let $t>0$. Then
$$\Pr\biggl[\Exp[S]-S\ge t\biggr] \le \exp\left(-\frac{t^2 }{2\sum_{i=1}^n \Exp[Y_i^2] } \right).$$
\end{theorem}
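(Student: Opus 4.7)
The plan is to use the exponential (Chernoff--Bernstein) method applied to the lower tail of $S$. For any $\lambda>0$, Markov's inequality applied to the random variable $\ee^{-\lambda(S-\Exp[S])}$, together with the independence of the $Y_i$, gives
\[
\Pr[\Exp[S]-S\ge t] \;=\; \Pr\!\left[\ee^{-\lambda(S-\Exp[S])}\ge \ee^{\lambda t}\right] \;\le\; \ee^{-\lambda t}\prod_{i=1}^n \Exp\!\left[\ee^{-\lambda(Y_i-\Exp[Y_i])}\right].
\]
The task therefore reduces to bounding each one-variable centered moment generating factor, and then optimizing over $\lambda>0$.

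The crucial ingredient is the one-sided inequality $\ee^{-x}\le 1-x+\tfrac12 x^2$, valid for all $x\ge 0$. (This is easily verified: both sides and their first derivatives agree at $x=0$, while the second derivative of the right-hand side minus the left-hand side equals $1-\ee^{-x}\ge 0$.) Since $Y_i\ge 0$ and $\lambda>0$, we may apply this inequality with $x=\lambda Y_i$ and take expectations to obtain
\[
\Exp[\ee^{-\lambda Y_i}] \;\le\; 1-\lambda\Exp[Y_i] + \tfrac12 \lambda^2\Exp[Y_i^2] \;\le\; \exp\!\left(-\lambda\Exp[Y_i] + \tfrac12 \lambda^2\Exp[Y_i^2]\right),
\]
where the last step uses $1+u\le \ee^u$. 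Multiplying by $\ee^{\lambda \Exp[Y_i]}$ to center, we get $\Exp[\ee^{-\lambda(Y_i-\Exp[Y_i])}]\le \exp(\tfrac12 \lambda^2 \Exp[Y_i^2])$. Writing $\sigma=\sum_{i=1}^n \Exp[Y_i^2]$ and multiplying these per-variable bounds, we arrive at
\[
\Pr[\Exp[S]-S\ge t] \;\le\; \exp\!\left(-\lambda t + \tfrac12 \lambda^2 \sigma\right).
\]
Choosing $\lambda=t/\sigma$, which minimizes the right-hand side over $\lambda>0$, yields exactly $\exp(-t^2/(2\sigma))$, as claimed.

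The only nontrivial ingredient is the one-variable MGF estimate; the rest is routine Chernoff manipulation. The non-negativity hypothesis enters precisely where $\ee^{-x}$ is bounded by its order-two Taylor polynomial, an inequality which fails for $x<0$. Notably, no upper bound on the $Y_i$ is required, which is what distinguishes this bound from standard two-sided Bernstein-type inequalities and makes it convenient in the application of Section~\ref{S-high}, where the relevant non-negative summands can be considerably larger than their mean but have well-controlled second moments.
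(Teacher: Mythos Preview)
Your proof is correct. The paper itself does not prove this theorem; it is quoted from Maurer \cite{Maurer03} and used as a black box in the proof of Lemma~\ref{L-alpha}. Your argument is in fact essentially the same as Maurer's original proof: the exponential-moment method together with the pointwise inequality $\ee^{-x}\le 1-x+\tfrac12 x^2$ for $x\ge 0$, followed by $1+u\le \ee^u$ and optimization over $\lambda$.
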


For a vertex $a\in V$ and $r>0$,
let $Ball(a,r)=\{b\in V \mid D(a,b)\le r\}$ be the \emph{ball} of radius~$r$ centered at~$a$. We next bound the probability that $Ball(a,\alpha \frac{\ln n}{n})$ is exceptionally large.

\begin{lemma}\label{L-alpha} For any $a\in V$, $\alpha\le 1$ and $c>0$ we have
\[ \Pr\left[ \left|Ball\left(a,\alpha\frac{\ln n}{n}\right)\right|> cn^{\alpha} \right] \;\le\;
\exp\left(-\frac{\ln^2 c}{60} \right).\]
\end{lemma}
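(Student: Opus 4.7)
The plan is to convert $|Ball(a,r)| > c n^\alpha$ (where $r = \alpha \ln n/n$) into a lower-tail event for the $k$-th order statistic $D_k(a)$, with $k = \lceil c n^\alpha \rceil$, and then apply Maurer's inequality (Theorem~\ref{T-Maurer}) to the sum-of-independents decomposition supplied by Lemma~\ref{L-Dk}. Concretely, the event becomes $D_k(a) \le r$, and the task reduces to showing $\Exp[D_k(a)]$ exceeds $r$ by a margin of order $\ln c / n$ --- heuristically because $\Exp[D_k(a)] \approx (\ln k)/n = (\ln c + \alpha \ln n)/n$ --- since Maurer will convert that margin into the target subgaussian tail $\exp(-\ln^2 c / 60)$.

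In detail, I would write $D_k(a) = \sum_{i=1}^k Y_i$ with $Y_i := X_i/(i(n-i))$, and exploit the partial-fraction identity $\tfrac{1}{i(n-i)} = \tfrac{1}{n}\bigl(\tfrac{1}{i} + \tfrac{1}{n-i}\bigr)$ to compute
$$\Exp[D_k(a)] \;=\; \frac{1}{n}\bigl(H_k + H_{n-1} - H_{n-k-1}\bigr) \;\ge\; \frac{H_k}{n} \;\ge\; \frac{\alpha \ln n + \ln c}{n},$$
so when $c \ge 1$ the deviation $t := \Exp[D_k(a)] - r$ is at least $(\ln c)/n$. For the second-moment sum, in the regime $k \le n/2$ we have $n-i \ge n/2$ throughout, hence
$$\sum_{i=1}^k \Exp[Y_i^2] \;=\; \sum_{i=1}^k \frac{2}{i^2 (n-i)^2} \;\le\; \frac{8}{n^2}\sum_{i=1}^\infty \frac{1}{i^2} \;<\; \frac{14}{n^2}.$$
Theorem~\ref{T-Maurer} then yields
$$\Pr[D_k(a) \le r] \;\le\; \exp\!\left(-\frac{t^2}{2\sum \Exp[Y_i^2]}\right) \;\le\; \exp\!\left(-\frac{\ln^2 c}{28}\right) \;\le\; \exp\!\left(-\frac{\ln^2 c}{60}\right).$$

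The main obstacle I anticipate is the boundary regime $k > n/2$, in which the $(n-i)^{-2}$ factors near the end of the sum destroy the clean $14/n^2$ estimate. There, however, $\ln c \ge (1-\alpha)\ln n - \ln 2$ is itself large and $\Exp[D_k(a)]$ picks up the additional tail $H_{n-1}-H_{n-k-1}$, producing a correspondingly enlarged $t$; splitting the second-moment sum into $i \le n/2$ versus $i > n/2$ and pairing the extra $1/(n-i)^2$ terms with the extra mean should still deliver an exponent that the looser constant~$60$ absorbs (the gap between $28$ and $60$ in the clean calculation is exactly the slack left for this). The residual case $c \le 1$ is not informative --- the right-hand side of the claim is close to $1$ --- and can be handled trivially.
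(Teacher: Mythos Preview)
Your approach is essentially the same as the paper's: convert to the event $D_k(a)\le r$, use the decomposition of Lemma~\ref{L-Dk}, lower-bound $\Exp[D_k(a)]$ by $(\ln k)/n \ge (\alpha\ln n+\ln c)/n$, and apply Maurer's inequality.

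The one place where you make your life harder than necessary is the second-moment sum. Rather than splitting into the regimes $k\le n/2$ and $k>n/2$ and then trying to pair extra mean against extra variance in the latter, the paper simply extends the sum to the full symmetric range and exploits the symmetry $i\leftrightarrow n-i$:
\[
\sum_{i=1}^{k}\frac{2}{i^2(n-i)^2}
\;\le\;
\sum_{i=1}^{n-1}\frac{2}{i^2(n-i)^2}
\;\le\;
2\sum_{i=1}^{n/2}\frac{2}{i^2(n-i)^2}
\;\le\;
\frac{16}{n^2}\sum_{i\ge 1}\frac{1}{i^2}
\;\le\;
\frac{30}{n^2}.
\]
This gives a bound valid for \emph{every} $k\le n-1$ in one line, and the constant $60=2\cdot 30$ in the lemma is exactly what falls out. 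No case analysis is needed, and the ``boundary regime'' obstacle you anticipate simply disappears. (As a side remark, your claim that $c\le 1$ ``can be handled trivially'' is not quite right---for very small $c$ the stated inequality actually fails since the ball always contains~$a$---but this is a defect of the lemma's stated range for~$c$, and the lemma is only ever applied with $c\gg 1$.)
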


\begin{proof}
Note that $|Ball(a,r)|> k$ if and only if $D_{k}(a)\le r$. By Lemma~\ref{L-Dk} we have $D_{k}=D_{k}(a) = \sum_{i=1}^{k} \frac{X_i}{i(n-i)}$, where $X_1,X_2,\ldots,X_{k}$ are i.i.d.\ exponential variables with mean~$1$.
Thus,
$$ \Exp[D_{k}] \;=\; \sum_{i=1}^{k}\frac{1}{i(n-i)} \;>\; \frac{1}{n}\sum_{i=1}^{k}\frac{1}{i}> \frac{\ln k}{n}.$$
As $\Exp[X_i^2]=2$, 
we have
$$
\sum_{i=1}^k \Exp\left[\left(\frac{X_i}{i(n-i)}\right)^2\right] = \sum_{i=1}^{k}\frac{2}{i^2(n-i)^2}\le \sum_{i=1}^{n-1}\frac{2}{i^2(n-i)^2}
\le 2\sum_{i=1}^{n/2}\frac{2}{i^2(n-i)^2}\le \frac{16}{n^2}\sum_{i=1}^{n/2}\frac{1}{i^2}\le
\frac{16}{n^2}\frac{\pi^2}{6}\le \frac{30}{n^2}.$$

With $k=cn^{\alpha}$ we get that $\Exp[D_k]>\frac{\alpha\ln n}{n}+\frac{\ln c}{n}$ and by Theorem~\ref{T-Maurer}, with $Y_i=\frac{X_i}{i(n-i)}$, we have
$$\Pr\left[ D_k\le \frac{\alpha\ln n}{n} \right] \;\le\;
\Pr\left[ \Exp[D_k] - D_k\ge \frac{\ln c}{n} \right] \;\le\;
\exp\left(- \frac{\left(\frac{\ln c}{n}\right)^2}{\frac{60}{n^2}} \right) \;=\; \exp\left( -\frac{\ln^2 c}{60} \right).$$
\end{proof}

As an immediate corollary, we get:

\begin{corollary}\label{C-alpha} For any $a\in V$, $\alpha\le 1$, $\epsilon>0$ and $c>0$ we have
\[ \Pr\left[ \left|Ball\left(a,\alpha\frac{\ln n}{n}\right)\right|>n^{\alpha+\epsilon} \right] \;=\; O(n^{-c})\;.\]
\end{corollary}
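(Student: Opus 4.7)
The plan is to derive the corollary as a direct consequence of Lemma~\ref{L-alpha}, by substituting a cleverly chosen value for the constant appearing there. To avoid notational collision with the constant~$c$ appearing in the statement of the corollary, I would first rename the constant in Lemma~\ref{L-alpha} (say, call it~$\gamma$) so that the bound reads
\[ \Pr\left[\left|Ball\left(a,\alpha\tfrac{\ln n}{n}\right)\right|>\gamma n^\alpha\right]\;\le\;\exp\left(-\tfrac{\ln^2 \gamma}{60}\right). \]

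Now set $\gamma = n^\epsilon$. Then $\gamma n^\alpha = n^{\alpha+\epsilon}$, exactly matching the event in the corollary, and $\ln \gamma = \epsilon \ln n$, so the bound becomes
\[ \Pr\left[\left|Ball\left(a,\alpha\tfrac{\ln n}{n}\right)\right|>n^{\alpha+\epsilon}\right]\;\le\;\exp\left(-\tfrac{\epsilon^2 \ln^2 n}{60}\right). \]
For any fixed constants $\epsilon>0$ and $c>0$, the exponent $-\epsilon^2 (\ln n)^2/60$ grows in absolute value faster than $c \ln n$ once $n$ is large enough (i.e., once $\ln n \ge 60c/\epsilon^2$), so the right-hand side is $o(n^{-c})$, and in particular $O(n^{-c})$.

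The only subtlety is purely bookkeeping: one must check that the conditions needed to apply Lemma~\ref{L-alpha} (namely $\alpha \le 1$ and the positivity of the constant in the lemma) are still satisfied after the substitution, which is immediate since $n^\epsilon > 0$. There is no real obstacle here — the corollary is essentially a restatement of Lemma~\ref{L-alpha} in a more convenient polynomial form, exploiting the fact that the super-polynomial decay $\exp(-\Theta(\ln^2 n))$ in the lemma beats any fixed inverse-polynomial rate.
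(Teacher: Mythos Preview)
Your proposal is correct and is exactly the intended derivation: the paper states Corollary~\ref{C-alpha} as an ``immediate corollary'' of Lemma~\ref{L-alpha} without further proof, and the substitution $\gamma=n^{\epsilon}$ (yielding the bound $\exp(-\epsilon^2\ln^2 n/60)=o(n^{-c})$) is precisely how one reads it off.
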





The results of this section were derived under the assumption that the edge weights are exponential. However, as explained in detail in the beginning of Section~2 of Janson \cite{Janson99}, the same results hold
asymptotically also for the uniform distribution. For the sake of completeness we show how to deduce from Lemma \ref{L-tail} and Corollary \ref{C-alpha} similar claims for uniform distributions.

Let $G$ be a complete directed graph on $n$ vertices with independent uniformly distributed edge weights $W(a,b)$ and let
$D(a,b)$ be the distance from $a$ to $b$ in this graph. Define $W'(a,b)=-\ln (1-W(a,b))$ and let $G'$ be a complete directed graph whose
edge weights are $W'(a,b)$. Denote by $D'(a,b)$ the distance from~$a$ to~$b$ in $G'$.
Note that all the edges of $G'$ have weights distributed as independent exponential random variables with mean $1$ and
that the correspondence between $G$ and $G'$ is a measure preserving transformation.
It is easy to check that
$ z \leq -\ln (1-z) \leq z+2z^2$ for all $0 \leq z \leq 1/2$.

Suppose that $G$ has the property that $D(a,b)>(1+12\alpha) \frac{\ln n}{n}$. Since $D'(a,b)\geq D(a,b)$, each such $G$ corresponds to a graph $G'$ which also has
$D'(a,b) >(1+12\alpha) \frac{\ln n}{n}$. Therefore by Lemma \ref{L-tail} the probability of this event is at most $5n^{-\alpha}$.
Suppose that $b$ is a vertex of $G$ satisfying $D(a,b) \leq \alpha \frac{\ln n}{n}$. Then, by the above inequality,  we have that
$D'(a,b) \leq \alpha \frac{\ln n}{n} +2\alpha^2 \frac{\ln^2 n}{n^2}=\alpha'\frac{\ln n}{n}$ with $\alpha'=\big(1+O(\frac{\ln n}{n})\big)\alpha$.
For any $\epsilon>0$, let $\epsilon'=\epsilon/2$. Then it is easy to check that $n^{\alpha'+\epsilon'} \leq n^{\alpha+\epsilon}$.
Therefore all $G$ in which $|Ball\left(a,\alpha\frac{\ln n}{n}\right)|>n^{\alpha+\epsilon}$ correspond to instances of $G'$
in which 
$|Ball\left(a,\alpha'\frac{\ln n}{n}\right)|>n^{\alpha'+\epsilon'}$.
By Corollary \ref{C-alpha} the probability of this event is at most $O(n^{-c})$ for any $c>0$.

\section{The expected number of locally shortest paths}\label{S-LSP}

Let $\LSP$ be the set of LSPs in $K_n$. Our goal in this section is to show that $\Exp[|\LSP|]=O(n^2)$.
This would follow immediately from the following lemmas.

\begin{lemma}\label{L-abc}  Let $a,b,c$ be three distinct vertices. The probability that $a\to b\to c$ is an LSP is $O(\frac{\ln^2 n}{n^2})$.
\end{lemma}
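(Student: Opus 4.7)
The plan is to work in the exponential edge-weight model (the uniform case then transfers via the coupling described at the end of Section~\ref{S-DIST}), and to prove the bound by a global double-counting rather than attacking a single triple. Specifically, I will bound the expected \emph{total} number of length-$2$ locally shortest paths by $O(n\ln^2 n)$ and then invoke the permutation symmetry of the iid model, which makes $\Pr[a\to b\to c \text{ is LSP}]$ identical for every distinct ordered triple $(a,b,c)$; dividing by $n(n-1)(n-2)=\Theta(n^3)$ then yields the $O(\ln^2 n/n^2)$ bound.

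For each vertex $b$, let $X_b=|\{c \ne b : b\to c \text{ is SP}\}|$ and $Y_b=|\{a \ne b : a\to b \text{ is SP}\}|$. Then
$$\sum_b X_b\,Y_b \;=\; \sum_b \sum_{a,c\ne b} \mathbb{I}[a\to b \text{ SP}]\,\mathbb{I}[b\to c \text{ SP}] \;\ge\; \#\{\text{distinct }(a,b,c) : a\to b\to c \text{ is LSP}\},$$
so by Cauchy--Schwarz the problem reduces to bounding $\Exp[X_b^2]$ and $\Exp[Y_b^2]$. For $X_b$, I invoke the classical fact (used to establish Lemma~\ref{L-edge}) that the shortest-path tree $T_b$ out of $b$ is distributed as a random recursive tree on $n$ vertices rooted at $b$; its root-degree is then $\sum_{i=2}^n Z_i$ with \emph{independent} Bernoulli $Z_i\sim\mathrm{Bern}(1/(i-1))$, giving $\Exp[X_b]=H_{n-1}$, $\Var[X_b]\le H_{n-1}$, and hence $\Exp[X_b^2]=H_{n-1}^2+O(\ln n)=O(\ln^2 n)$.

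For $Y_b$ I will use a \emph{transpose symmetry}. Let $G^T$ be the directed graph obtained by reversing every edge of $K_n$, so that the weight of $u\to v$ in $G^T$ equals the weight of $v\to u$ in $G$. Since the edge weights are iid, $G^T\stackrel{d}{=}G$. Vertex-by-vertex reversal gives a weight-preserving bijection between directed $a\to b$ paths in $G$ and directed $b\to a$ paths in $G^T$, so the event ``$a\to b$ is SP in $G$'' coincides with ``$b\to a$ is SP in $G^T$''. Therefore $Y_b$ computed on $G$ equals $X_b$ computed on $G^T$, and $G^T\stackrel{d}{=}G$ then gives $Y_b\stackrel{d}{=}X_b$ and $\Exp[Y_b^2]=O(\ln^2 n)$. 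Cauchy--Schwarz delivers $\Exp[X_bY_b]\le O(\ln^2 n)$; summing over the $n$ choices of $b$ yields $\sum_b\Exp[X_bY_b]=O(n\ln^2 n)$, and the symmetry argument above finishes the proof.

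The step I expect to require the most care is the transpose symmetry: spelling out the weight-preserving bijection between alternative paths in the two orientations cleanly enough that the SP events on the two sides are genuinely identified, so that one is not lured into double-counting or into forgetting that the direct edge in $G^T$ also reverses. Once that is done the rest is routine second-moment computation, and the argument relies only on the RRT characterisation of $T_b$ and on Lemma~\ref{L-edge}, avoiding any appeal to the finer tail bounds of Lemmas~\ref{L-tail} and~\ref{L-alpha}.
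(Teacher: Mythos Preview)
Your proof is correct but takes a genuinely different route from the paper's. The paper handles the dependence between the events ``$a\to b$ is SP'' and ``$b\to c$ is SP'' by a partition trick: fix a split $V=V_1\cup V_2$ with $V_1\cap V_2=\{b\}$, $a\in V_1$, $c\in V_2$, $|V_i|\ge n/2$; since an edge that is SP in $G$ remains SP in any induced subgraph containing its endpoints, the two events are implied by the corresponding events in $G[V_1]$ and $G[V_2]$, which are independent (disjoint edge sets), and Lemma~\ref{L-edge} applied to each half gives at most $\bigl(\frac{\ln(n/2)}{n/2}+O(\frac{1}{n})\bigr)^2$. That is essentially a two-line argument using only Lemma~\ref{L-edge}.

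Your approach instead computes a second moment: $X_b$ is the root degree of the shortest-path tree out of $b$, whose RRT distribution gives $\Exp[X_b^2]=H_{n-1}^2+O(\ln n)$; transpose symmetry handles $Y_b$; Cauchy--Schwarz combines them; and vertex-permutation symmetry converts the global $O(n\ln^2 n)$ bound into the per-triple $O(\ln^2 n/n^2)$. This is longer and leans on slightly heavier input (the full RRT coupling rather than just the marginal probability in Lemma~\ref{L-edge}), but it has the virtue of never needing to manufacture independence---the correlation between $X_b$ and $Y_b$ is simply absorbed by Cauchy--Schwarz---and it produces the $O(n\ln^2 n)$ global count on two-edge LSPs as an explicit intermediate, which is precisely how the paper uses Lemma~\ref{L-abc} in the proof of Theorem~\ref{T-LSP} anyway.
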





\begin{proof} The path $a\to b\to c$ is an LSP if and only if both $a\to b$ and $b\to c$ are shortest paths.
By Lemma~\ref{L-edge}, the probability that each one of the edges $a\to b$ and $b\to c$ is a shortest path is $\frac{\ln n}{n} + O(\frac{1}{n})$. Unfortunately, the events ``$a\to b$ is a shortest path'' and ``$b\to c$ is a shortest path'' are \emph{not} independent (and probably positively correlated). To circumvent that, let $V_1,V_2\subset V$ such that $V_1\cup V_2=V$, $a\in V_1$, $c\in V_2$, $V_1\cap V_2=\{b\}$ and $|V_1|,|V_2|\ge n/2$ be a fixed partition of the vertex set $V$. If $a\to b$ and $b\to c$ are shortest paths in~$G$, then $a\to b$ is clearly also a shortest path in $G[V_1]$, the subgraph of $G$ induced by~$V_1$, and $b\to c$ is also a shortest path in $G[V_2]$. These events are now independent, as the edge sets of $G[V_1]$ and $G[V_2]$ are disjoint. The probability that $a\to b\to c$ is an LSP is thus at most $(\frac{\ln(n/2)}{n/2}+O(\frac{1}{n}))^2$.
\end{proof}


\begin{lemma}\label{L-abcd} Let $a,b,c,d$ be four distinct vertices. 
The probability that $a\to b\SP c\to d$ is an LSP is $O(\frac{1}{n^2})$.
\end{lemma}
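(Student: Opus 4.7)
My plan is to exploit an independence obtained by removing the vertices $a$ and $d$. Let $G^{\ast} := G \setminus \{a,d\}$, the subgraph obtained by deleting $a$ and $d$ and all their incident edges, and write $D^{\ast}$ for shortest-path distances in $G^{\ast}$. Let $(A)$ denote the event ``$a \to b \SP c$ is a shortest path in $G$'' and $(B)$ the event ``$b \SP c \to d$ is a shortest path in $G$''. Observe that $(A)$ implies that the shortest path from $b$ to $c$ in $G$ does not visit $a$ (otherwise $a \to b \SP c$ would revisit $a$ and fail to be a simple path), and similarly $(B)$ implies that this path does not visit $d$; hence under $(A) \cap (B)$ the shortest path $b \SP c$ lies entirely in $G^{\ast}$ and has length $D^{\ast}(b,c)$. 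Moreover, since $D^{G}(v,c) \le D^{\ast}(v,c)$ for every $v$, the analogous event $(A^{\ast})$ obtained by substituting $D^{\ast}$ for $D$ throughout the definition of $(A)$ is implied by $(A)$, so that $\Pr[(A) \cap (B)] \le \Pr[(A^{\ast}) \cap (B^{\ast})]$.

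Conditioning on $G^{\ast}$, the event $(A^{\ast})$ is a function only of the $n-1$ edge weights $\{W(a,v) : v \ne a\}$, while $(B^{\ast})$ is a function only of $\{W(v,d) : v \ne d\}$; because $a \ne d$ these two families of random variables are disjoint, so $(A^{\ast})$ and $(B^{\ast})$ are conditionally independent given $G^{\ast}$, yielding
\[
\Pr[(A^{\ast}) \cap (B^{\ast})] \;=\; \Exp_{G^{\ast}}\!\bigl[\Pr[(A^{\ast}) \mid G^{\ast}] \cdot \Pr[(B^{\ast}) \mid G^{\ast}]\bigr].
\]
Since $W(a,v) \sim EXP(1)$ iid, $\Pr[(A^{\ast}) \mid G^{\ast}]$ is the probability that $b$ minimizes $W(a,v) + D^{\ast}(v,c)$ over $v \ne a$, which by a standard minimum-of-shifted-exponentials computation equals
\[
\Pr[(A^{\ast}) \mid G^{\ast}] \;=\; \int_{0}^{\infty}\! \exp\!\Bigl(-s - \!\!\sum_{v \ne a,b}\!\!\bigl(s - (D^{\ast}(v,c) - D^{\ast}(b,c))\bigr)^{+}\Bigr)\,ds,
\]
with an analogous expression for $\Pr[(B^{\ast}) \mid G^{\ast}]$. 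Marginally, by symmetry over $V \setminus \{a,c\}$ combined with Lemma~\ref{L-edge}, one has $\Pr[(A^{\ast})] = \Pr[(B^{\ast})] = \Theta(1/n)$, so if the two conditional factors were uncorrelated, the required $O(1/n^{2})$ would follow immediately.

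Keeping only the $v$'s with $D^{\ast}(v,c) < D^{\ast}(b,c)$ in the integrand gives the crude bound $\Pr[(A^{\ast}) \mid G^{\ast}] \le 1/k$, where $k$ is the rank of $b$ in the sorted list $\{D^{\ast}(v,c) : v \ne c\}$; symmetrically $\Pr[(B^{\ast}) \mid G^{\ast}] \le 1/j$ with $j$ the rank of $c$ in $\{D^{\ast}(b,v) : v \ne b\}$. The main obstacle is the positive correlation between these two conditional factors: both inflate as $D^{\ast}(b,c)$ shrinks, since then the ranks $j,k$ do as well, and a direct $\Exp[1/(jk)]$ bound therefore only yields $O(\log^{2} n / n^{2})$. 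To recover the sharp $O(1/n^{2})$ I would sharpen the conditional bounds by using the full integral rather than the $1/k$ truncation: by Lemma~\ref{L-Dk} the gaps between consecutive order statistics $D^{\ast}_{(i+1)} - D^{\ast}_{(i)}$ are of order $1/(i(n-i))$, so the ball of radius $D^{\ast}_{(k)} + O(1/(kn))$ around $c$ already contains $\Theta(n)$ vertices; these extra competitors contribute a further factor $\Theta(1/n)$ to $\Pr[(A^{\ast}) \mid G^{\ast}]$, and symmetrically to $\Pr[(B^{\ast}) \mid G^{\ast}]$. Averaging the resulting sharper product against the density of $r = D^{\ast}(b,c)$ supplied by Lemma~\ref{L-Dab} then produces the claimed $O(1/n^{2})$.
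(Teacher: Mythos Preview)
Your setup through the conditional-independence step is sound: under $(A)\cap(B)$ the path $b\SP c$ avoids both $a$ and $d$, so $D(b,c)=D^*(b,c)$, and together with $D(v,c)\le D^*(v,c)$ this gives $(A)\cap(B)\Rightarrow(A^*)\cap(B^*)$; moreover the families $\{W(a,v):v\ne a,d\}$ and $\{W(v,d):v\ne a,d\}$ are disjoint, so the factorization $\Exp_{G^*}[\Pr[(A^*)\mid G^*]\Pr[(B^*)\mid G^*]]$ is valid. You also correctly diagnose the obstacle: the crude rank bounds $\Pr[(A^*)\mid G^*]\le 1/k$ and $\Pr[(B^*)\mid G^*]\le 1/j$ are insufficient because $j$ and $k$ are both small exactly when $D^*(b,c)$ is small.

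The gap is in your proposed sharpening. You claim that ``the ball of radius $D^*_{(k)}+O(1/(kn))$ around~$c$ already contains $\Theta(n)$ vertices.'' This is false: by Lemma~\ref{L-Dk} the spacing $D^*_{(i+1)}-D^*_{(i)}$ has mean $1/(i(n-i))$, so near rank~$k$ the gap is itself $\Theta(1/(kn))$, and enlarging the radius by $O(1/(kn))$ admits only $O(1)$ further vertices, not $\Theta(n)$. To absorb $\Theta(n)$ ranks you must enlarge the radius by $\sum_{i=k}^{n/2}1/(i(n-i))=\Theta((\log(n/k))/n)$; but the corresponding competitors only enter the integrand once $s$ exceeds that amount, by which point the leading $e^{-ks}$ factor has already decayed below $1/k$. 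So the ``further factor $\Theta(1/n)$'' is not there, and the final averaging step is too vague to repair the deficit. As written your argument does not reach $O(1/n^2)$.

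The paper's proof takes a shorter and quite different route. Rather than condition on all of~$G^*$, it isolates only the two single edge weights $W(a,b)$ and $W(c,d)$. Setting $X=D_{a,d}(b,c)$, $Y=D_{b,d}(a,c)$, $Z=D_{a,c}(b,d)$ (each a distance in the graph with the two indicated vertices deleted), the LSP condition forces $W(a,b)\le Y-X$ and $W(c,d)\le Z-X$, and by construction none of $X,Y,Z$ depends on $W(a,b)$ or $W(c,d)$. Hence the LSP probability is at most $\Exp\bigl[(Y-X)^+(Z-X)^+\bigr]$. Now $X,Y,Z$ are identically distributed---each is a distance between two fixed vertices in a randomly weighted $K_{n-2}$---with common variance $O(1/n^2)$ by Lemma~\ref{L-D}. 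The elementary inequalities $xy\le\tfrac12(x^2+y^2)$ and $(x-y)^2\le 2(x^2+y^2)$ then give
\[
\Exp\bigl[(Y-X)^+(Z-X)^+\bigr]\ \le\ \tfrac12\bigl(\Exp[(Y-X)^2]+\Exp[(Z-X)^2]\bigr)\ \le\ 2\bigl(\Var[X]+\Var[Y]\bigr)\ =\ O(1/n^2).
\]
No rank or ball analysis is needed; the whole argument reduces to the variance bound for a single pairwise distance, and the correlation you struggled with is handled by the one-line inequality $xy\le\tfrac12(x^2+y^2)$.
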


\begin{proof}
If $a\to b\SP c\to d$ is an LSP, then by definition
$$W(a,b) + D(b,c) = D(a,c) \quad,\quad  D(b,c) + W(c,d) = D(b,d).$$

If $a\to b\SP c\to d$ is an LSP, then $b\SP c$ does not pass through $a$ or $d$. (If, for example, $b\SP c$ passes through~$a$, then $a\SP c$ is a subpath of $b\SP c$, and $a\to b\SP c$ is therefore not a shortest path, contradicting the assumption that $a\to b\SP c\to d$ is an LSP.)
Thus, $D(b,c)=D_{a,d}(b,c)$, where $D_{a,d}(b,c)$ is the distance from~$b$ to~$c$ when~$a$ and~$d$ are removed from the graph.
We also clearly have $D(a,c)\le D_{b,d}(a,c)$ and $D(b,d)\le D_{a,c}(b,d)$.

Thus, if $a\to b\SP c\to d$ is an LSP, then
$$W(a,b) + D_{a,d}(b,c) \le D_{b,d}(a,c) \quad,\quad  D_{a,d}(b,c) + W(c,d) \le D_{a,c}(b,d),$$
or equivalently
$$W(a,b) \le D_{b,d}(a,c)-D_{a,d}(b,c) \quad,\quad W(c,d) \le D_{a,c}(b,d)-D_{a,d}(b,c).\eqno{(*)}$$
It is thus sufficient to bound the probability that $(*)$ happens.
For brevity, let $$X=D_{a,d}(b,c) \quad,\quad Y=D_{b,d}(a,c) \quad,\quad Z=D_{a,c}(b,d).$$
A crucial observation now is that $X,Y$ and~$Z$ do \emph{not} depend on $W(a,b)$ and $W(c,d)$. This follows from the fact that in each one of these distances one of~$a$ and~$b$, and one of~$c$ and~$d$, is removed from the graph.

We can thus choose the random weights of the edges in two stages. First we choose the random weights of all edges \emph{except} the two edges $a\to b$ and $c\to d$. The values of $X,Y$ and $Z$ are then already determined. We then choose $W(a,b)$ and $W(c,d)$, the random weights of the two remaining edges. As the choice of $W(a,b)$ and $W(c,d)$ is independent of all previous choices, and as $W(a,b)$ and $W(c,d)$ are independent and uniformly distributed in $[0,1]$, we get that
$$\Pr[(*)] \;=\; E\bigl[(Y-X)^+\cdot (Z-X)^+\bigr] \;\le\; E\bigl[|Y-X||Z-X|\bigr],$$
where $x^+=\max\{x,0\}$. (Note that we are not assuming here that $X,Y$ and $Z$ are independent. They are in fact dependent.)

We next note that each of $X,Y$ and $Z$ is the distance between two given vertices in a randomly weighted complete graph on $n-2$ vertices. Thus, $\Exp[X]=\Exp[Y]=\Exp[Z]$. By Lemma~\ref{L-D}, we have 
$$ \Var[X]=\Var[Y]=\Var[Z]\;=\;(1+o(1))\frac{\pi^2}{2n^2}.$$
Now,
$$\Pr[(*)] \;\le\; E\bigl[|Y-X||Z-X|\bigr] \;\le\; \frac{1}{2}(E\bigl[(Y-X)^2\bigr]+E\bigl[(Z-X)^2\bigr]),$$
using the trivial inequality $xy\le\frac{1}{2}(x^2+y^2)$. All that remains, therefore, is to bound $E\bigl[(Y-X)^2\bigr]$ and $E\bigl[(Z-X)^2\bigr]$. Let $\mu=\Exp[X]=\Exp[Y]$. Then,
$$\begin{array}{lcl}
E\bigl[(Y-X)^2\bigr] & = & E\bigl[ ((Y-\mu)-(X-\mu))^2 \bigr] \\[3pt]
           & \le & 2(E\bigl[(Y-\mu)^2\bigr]+E\bigl[(X-\mu)^2\bigr])\\[3pt]
           & = & 2(\Var[Y]+\Var[X]) \\[3pt]
           & = & (1+o(1))\frac{\pi^2}{n^2},
\end{array}$$
using the inequality $(x-y)^2\le 2(x^2+y^2)$. Exactly the same bound applies to $E\bigl[(Z-X)^2]$. Putting everything together, we get that $\Pr[(*)]\le (1+o(1))\frac{\pi^2}{n^2}$.
\end{proof}

\begin{theorem}\label{T-LSP} $\Exp[|\LSP|]=\Theta(n^2)$
\end{theorem}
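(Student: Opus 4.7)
The lower bound $\Exp[|\LSP|]=\Omega(n^2)$ is immediate, since every one of the $n(n-1)$ edges of $K_n$ is (by convention) an LSP. The content is the matching upper bound, and the plan is to split LSPs by length and apply the two preceding lemmas via linearity of expectation.

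I would classify every LSP~$\pi$ by the number of edges it contains. Length~$1$ paths contribute exactly $n(n-1)$ LSPs. For length~$2$, every such LSP has the form $a\to b\to c$ for some ordered triple of distinct vertices, and there are at most $n(n-1)(n-2)=O(n^3)$ such triples. By Lemma~\ref{L-abc} each is an LSP with probability $O(\frac{\ln^2 n}{n^2})$, so linearity of expectation gives an expected contribution of $O(n\ln^2 n) = o(n^2)$.

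The main case is LSPs of length at least~$3$. Here the key structural observation is that, since all shortest paths are unique with probability~$1$, any LSP of length~$\ge 3$ is completely determined by its first edge $a\to b$ and its last edge $c\to d$: the middle portion is forced to be the shortest path $b\SP c$. Moreover, four vertices $a,b,c,d$ must be distinct (otherwise the LSP would contain a repeated vertex, which is impossible since removing either end-edge yields a shortest path, and shortest paths are simple). Hence the number of \emph{candidate} LSPs of length $\ge 3$ is at most the number of ordered $4$-tuples of distinct vertices, namely $O(n^4)$. Lemma~\ref{L-abcd} bounds the probability that any fixed such candidate is actually an LSP by $O(\frac{1}{n^2})$, so by linearity of expectation the expected number of LSPs of length~$\ge 3$ is $O(n^4)\cdot O(\frac{1}{n^2}) = O(n^2)$.

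Summing the three contributions yields $\Exp[|\LSP|] = n(n-1) + O(n\ln^2 n) + O(n^2) = \Theta(n^2)$, as required. The proof is essentially a bookkeeping exercise once Lemmas~\ref{L-abc} and~\ref{L-abcd} are in hand; there is no real obstacle beyond articulating the uniqueness argument that caps the number of candidate long LSPs by $O(n^4)$ and then invoking linearity of expectation.
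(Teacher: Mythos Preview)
Your proof is correct and matches the paper's approach exactly: split LSPs by edge-count, handle lengths~$1$ and~$2$ directly, and for length~$\ge 3$ apply Lemma~\ref{L-abcd} to each of the $O(n^4)$ ordered $4$-tuples via linearity of expectation. One small remark: your simplicity argument rules out every coincidence among $a,b,c,d$ except $a=d$ (in that case both $a\to b\SP c$ and $b\SP c\to a$ remain simple), but such ``cyclic'' LSPs number at most $n^2$ since each is determined by the pair $(a,c)$, and the paper's own proof glosses over this case as well.
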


\begin{proof} The number of LSPs of length~$1$ is exactly $n(n-1)$. (Every edge is an LSP of length~$1$.) By Lemma~\ref{L-abc}, the expected number of LSPs of length~$2$ is $O(n^3\cdot \frac{\ln^2 n}{n^2})=O(n\ln^2 n)$. By Lemma~\ref{L-abcd}, the expected number of LSPs of length greater than two is $O(n^4\cdot \frac{1}{n^2})=O(n^2)$.
\end{proof}


Experiments that we have done seem to suggest that $\Exp[|\LSP|]$ is very close to $(\frac{\pi^2}{6}+1)n^2\simeq 2.64n^2$.

The results of this section were stated and proved for directed graphs. It is easy to check, however,
that our methods can be also used to provide an all pairs shortest paths algorithm with a quadratic running time
for the complete \emph{undirected} graphs on~$n$ vertices with uniform edge weights.



\section{High probability bound on the number of locally shortest paths}\label{S-high}


Our goal in this section is to show that the number of LSPs is $O(n^2)$ asymptotically almost surely (a.a.s), i.e., that there exists a constant~$c$ such that $\Pr[|\LSP|<cn^2]\to 1$, as $n\to\infty$.

Let $E^*$ be the set of edges that are shortest paths. Let $\Delta$ be the maximum outdegree in the subgraph $G^*=(V,E^*)$. (McGeoch \cite{McGeoch95} refers to $G^*=(V,E^*)$ as the \emph{essential} subgraph.)
We first show that $\Delta=O(\log n)$, with very high probability.

\begin{lemma}\label{L-Delta} For every $c>6$, we have $\Pr[\Delta>c\ln n]=O(n^{1-c/6})$.
\end{lemma}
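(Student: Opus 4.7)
The plan is to exploit the random-recursive-tree structure of the shortest-path tree under exponential edge weights (recalled in the introduction), apply a Chernoff bound to a sum of \emph{independent} Bernoullis, and then transfer the conclusion to uniform weights via the coupling used at the end of Section~\ref{S-DIST}.

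I first work under exponential edge weights. The shortest-path tree rooted at any vertex $a$ is then distributed as a random recursive tree on~$n$ vertices: the vertices are added in order of their distance from~$a$, and by the memoryless property (applied to Dijkstra's algorithm), the parent of each newly extracted vertex is uniform over the set of already-visited vertices. Consequently the out-degree $\Delta_a$ of $a$ in $G^*$ has the distribution of the root-degree of such a tree, and admits the decomposition $\Delta_a=\sum_{i=1}^{n-1} X_i$, where the $X_i$ are \emph{independent} Bernoulli variables with $\Pr[X_i=1]=1/i$ (when the $(i{+}1)$-st vertex is added, there are $i$ candidate parents chosen uniformly). In particular $\Exp[\Delta_a]=H_{n-1}=\ln n+O(1)$.

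The standard Chernoff bound for sums of independent Bernoullis applied to this decomposition gives
$$\Pr[\Delta_a\ge c\ln n]\;\le\;\left(\frac{\ee\,H_{n-1}}{c\ln n}\right)^{c\ln n}\ee^{-H_{n-1}}\;=\;O\bigl(n^{\,c-c\ln c-1}\bigr).$$
The exponent $c-c\ln c-1$ is at most $-c/6$ precisely when $\ln c\ge 7/6-1/c$. The difference between these two sides has derivative $(c-1)/c^2>0$ on $(1,\infty)$ and is already positive at $c=6$ (since $\ln 6\approx 1.79>1$), so the inequality persists for every $c>6$. Thus $\Pr[\Delta_a>c\ln n]=O(n^{-c/6})$, and a union bound over the $n$ choices of $a$ yields $\Pr[\Delta>c\ln n]=O(n^{1-c/6})$ under the exponential model.

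To transfer the bound to the uniform model I would use the coupling $W'(a,b)=-\ln(1-W(a,b))$ already employed at the end of Section~\ref{S-DIST}: it sends uniform weights to exponential ones, and on the high-probability event that all relevant weights and distances are $O(\ln n/n)$ the distortion is only by a factor $1+o(1)$. The delicate step is this final translation, because the shortest-path tree is not preserved on the nose by the coupling, only asymptotically. If this transfer proves subtle, a clean fallback that works directly in the uniform model is to bound $\Delta_a\le|\{b:W(a,b)\le R_a\}|$ with $R_a=\max_b D(a,b)$, invoke Lemma~\ref{L-max} to force $R_a=O(\ln n/n)$ a.a.s., and then apply a Chernoff bound to the corresponding Binomial$(n-1,O(\ln n/n))$ count; this gives a bound of the same shape once the relevant constants are chosen large enough.
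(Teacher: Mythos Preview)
Your recursive-tree computation is correct for exponential weights and in fact yields the sharper exponent $c-c\ln c-1$ in place of $-c/6$. The gap is exactly where you put your finger: the coupling $W'=-\ln(1-W)$ does \emph{not} preserve shortest-path trees, because shortest paths depend on sums of weights and not merely on their ordering; so the root-degree decomposition into independent Bernoulli$(1/i)$ indicators is specific to the exponential model, and there is no clean way to pull the tail bound back to uniform weights. This is a genuine obstruction, not a technicality, and your main argument as written does not establish the lemma in the uniform model.

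The fallback you sketch is precisely the paper's proof, with the constants made explicit. Take the threshold $\tfrac{c}{2}\tfrac{\ln n}{n}$: each out-degree in the subgraph $G'$ of light edges is Binomial with mean $\mu\approx\tfrac{c}{2}\ln n$, and the Chernoff bound $\Pr[X\ge 2\mu]\le e^{-\mu/3}$ gives $\Pr[X\ge c\ln n]\le n^{-c/6}$, hence $\Pr[\Delta'>c\ln n]\le n^{1-c/6}$ after a union bound. The event $\Delta>\Delta'$ forces some distance to exceed the threshold, which by Lemma~\ref{L-max} has probability $O(n^{3-c/2}\log^2 n)=o(n^{1-c/6})$ for $c>6$. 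This argument works directly in the uniform model, with no exponential detour. So: your main route is an elegant model-specific sharpening, while the paper's route is exactly your fallback with the constants filled in.
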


\begin{proof} Let $G'=(V,E')$ be the subgraph of~$G$ composed of all edges of weight at most $\frac{c}{2}\frac{\ln n}{n}$, and let~$\Delta'$ be the maximum outdegree in~$G'$. The outdegree of each vertex in~$G'$ is binomially distributed with parameters~$n$ and $\frac{c}{2}\frac{\ln n}{n}$.
A special case of Chernoff bound (see, e.g., \cite{MiUp05}, p.~64) states that if $X$ is a binomial variable with $\mu=\Exp[X]$, then $\Pr[X\ge 2\mu]\le {\rm e}^{-\mu/3}$. Thus, the probability that the degree of a given vertex exceeds $c\ln n$ is at most $n^{-c/6}$. Thus $\Pr[\Delta'>c\ln n]\le n^{1-c/6}$.
Now, $\Delta>\Delta'$ only if at least one distance in~$G$ is greater than $\frac{c}{2}\frac{\ln n}{n}$.
By Lemma~\ref{L-max}, the probability that this happens is at most $O(n^{3-c/2}\log^2 n)$. For $c>6$ we have $1-c/6>3-c/2$.
\end{proof}

The following lemma is trivial and can also be found in Demetrescu and Italiano \cite{DeIt04a}.

\begin{lemma}\label{L-simple} If all shortest paths are unique, then $|\LSP|\le \Delta n^2$.
\end{lemma}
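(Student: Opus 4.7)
The plan is to bound $|\LSP|$ by a simple counting argument: I will identify each LSP with a pair consisting of its first edge and its terminal vertex, and then show that the first edge of any non-trivial LSP is essentially constrained to lie in $E^*$. The key structural observation is that the first edge of every LSP of length at least two must itself be a shortest path, i.e., must lie in $E^*$. Indeed, if $\pi = u \to u' \SP v' \to v$ is an LSP of length at least two, then, by definition of LSP, deleting the last edge of~$\pi$ produces the shortest path from~$u$ to~$v'$. Since shortest paths are assumed unique, every sub-path of a shortest path is itself the unique shortest path between its endpoints (otherwise one could substitute a strictly cheaper sub-path and contradict uniqueness). Applying this to the one-edge initial sub-path $u \to u'$ gives $u \to u' \in E^*$.

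The second step is to observe that, given a first edge $e_1 = u \to u' \in E^*$ and a terminal vertex~$v$, at most one length-$\ge 2$ LSP has $e_1$ as first edge and $v$ as terminal vertex: its tail $u' \SP v' \to v$ is forced by uniqueness to be the unique shortest path from~$u'$ to~$v$. Hence the number of length-$\ge 2$ LSPs is at most $|E^*| \cdot n \le (\Delta n) \cdot n = \Delta n^2$. The remaining LSPs are precisely the edges of~$G$ (length one), of which there are $n(n-1) \le n^2$; this $O(n^2)$ term is absorbed into $\Delta n^2$ since $\Delta \ge 1$ trivially (the cheapest outgoing edge from any vertex is a shortest path).

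The argument is essentially a single counting step and no substantive obstacle arises; the entire content lies in the two observations that uniqueness forces subpath-optimality, and that subpath-optimality restricts first edges of LSPs to~$E^*$, collapsing the remaining degrees of freedom to one path per (first edge, terminal vertex) pair.
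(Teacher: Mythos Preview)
Your approach is essentially the paper's: the paper observes that every LSP is a shortest path with an $E^*$-edge appended at the \emph{end}, while you use the symmetric observation that the \emph{first} edge lies in $E^*$ and pair it with the terminal vertex—the counting is the same. One small slip: your ``absorption'' step is a non sequitur, since $\Delta\ge 1$ yields $n^2\le\Delta n^2$ but not $\Delta n^2+n^2\le\Delta n^2$; what you have actually shown is $|\LSP|\le(\Delta+1)n^2$. The paper's one-line proof glosses over the identical issue (its claim that every LSP arises by appending an $E^*$-edge to a shortest path fails for single-edge LSPs $u\to v$ with $(u,v)\notin E^*$), and since the lemma is only ever used to conclude $|\LSP|=O(\Delta n^2)$, the discrepancy is harmless.
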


\begin{proof} Every LSP is obtained by appending an edge which is itself a shortest path to some shortest path.
The number of shortest path in a graph is at most $n^2$ (assuming uniqueness) and each one of these shortest path can be extended by at most~$\Delta$ edges.
\end{proof}

Note that Lemmas~\ref{L-Delta} and~\ref{L-simple} imply that the number of LSPs is $O(n^2\log n)$ with high probability. To improve this bound to $O(n^2)$ we need to work harder.

\begin{definition}[$\beta$-short paths]
Let $\beta>0$ be a (small) constant. 
We say that a shortest path $\pi$ is \emph{$\beta$-short} if and only if its length is at most $(1+\beta)\frac{\ln n}{n}$, and \emph{$\beta$-long}, otherwise. Similarly, we say that an LSP~$\pi$ is \emph{$\beta$-short} if both shortest paths~$l[\pi]$ and~$r[\pi]$ obtained by removing its first edge and last edge are short, and \emph{$\beta$-long}, otherwise. Let $\SPP^S$, $\SPP^L$, $\LSP^S$, $\LSP^L$ be the sets of $\beta$-short and $\beta$-long shortest and locally shortest paths. (Note that these sets depend on the parameter~$\beta$.)
\end{definition}

Clearly, $|\LSP|=|\LSP^L|+|\LSP^S|$. We estimate separately the number of $\beta$-long LSPs and the number of $\beta$-short LSPs.
We begin by bounding the number of $\beta$-long shortest paths and locally shortest paths.




\begin{lemma}\label{L-long-SP} For every $\beta>0$, we have $\Exp[|\SPP^L|]=O(n^{2-\beta/12})$.
\end{lemma}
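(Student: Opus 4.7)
The plan is to apply linearity of expectation over ordered pairs of vertices, and then bound the probability that any single pair has a $\beta$-long shortest path using the tail bound from Lemma~\ref{L-tail}.

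First I would observe that, under the uniqueness assumption, there are exactly $n(n-1)$ shortest paths in the graph—one for each ordered pair of distinct vertices $(a,b)$. By the definition of $\beta$-long, the shortest path $a\SP b$ belongs to $\SPP^L$ if and only if its length $D(a,b)$ exceeds $(1+\beta)\frac{\ln n}{n}$. Hence
\[
\Exp[|\SPP^L|] \;=\; \sum_{a\ne b} \Pr\!\left[D(a,b) > (1+\beta)\tfrac{\ln n}{n}\right].
\]

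Next I would set $\alpha = \beta/12$ and invoke Lemma~\ref{L-tail}. For any fixed $\beta\in(0,6]$ and $n$ large enough, the hypothesis $n^{-\alpha} < \alpha \le 1/2$ is satisfied, so the lemma gives
\[
\Pr\!\left[D(a,b) > (1+\beta)\tfrac{\ln n}{n}\right] \;\le\; 5 n^{-\beta/12}.
\]
Summing this bound over the $n(n-1)$ ordered pairs yields $\Exp[|\SPP^L|] \le 5n(n-1)\cdot n^{-\beta/12} = O(n^{2-\beta/12})$, as required. (For $\beta > 6$, the claim becomes weaker than for $\beta = 6$ in terms of the exponent $2-\beta/12$ once this exponent drops below $3/2$; moreover, Lemma~\ref{L-max} shows that with high probability no path in the graph is $\beta$-long for $\beta$ large enough, so it suffices to prove the lemma in the small-$\beta$ regime and then note monotonicity of the statement in~$\beta$.)

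There is no real obstacle here—the whole content is packaged into Lemma~\ref{L-tail}. The only point requiring minor care is checking the hypothesis on $\alpha$ so that Lemma~\ref{L-tail} applies, and noting that the result is a statement about \emph{shortest} paths only (so we simply count pairs), which is why no union bound over paths of different lengths or structures is needed. The corresponding bound on $|\LSP^L|$ (which will presumably be the next step in the paper) will be the real work, since an LSP is specified by more than just its endpoints and Lemma~\ref{L-simple} would only give an extra factor of $\Delta=O(\log n)$.
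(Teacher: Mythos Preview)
Your proposal is correct and matches the paper's proof essentially line for line: set $\alpha=\beta/12$, apply Lemma~\ref{L-tail} to bound $\Pr[D(a,b)>(1+\beta)\frac{\ln n}{n}]$ by $O(n^{-\beta/12})$, and sum over the $n(n-1)$ ordered pairs. Your added care about the hypothesis $\alpha\le 1/2$ (i.e., $\beta\le 6$) is more than the paper itself spells out; the paper only ever applies this lemma with $\beta=\tfrac{12}{25}$, so the restriction is harmless, though your parenthetical about monotonicity is slightly garbled (the claimed bound gets \emph{stronger} as $\beta$ grows, so monotonicity of $|\SPP^L|$ in $\beta$ alone does not cover large $\beta$; one would use Lemma~\ref{L-max} there, as you hint).
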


\begin{proof} By Lemma~\ref{L-tail}, with $\alpha=\beta/12$, we get that for any $a\ne b\in V$ we have
\[\Pr[D(a,b)\ge (1+\beta)\frac{\ln n}{n}] \;=\; O(n^{-\beta/12})\;.\] The lemma follows by the linearity of expectation.
\end{proof}

\begin{lemma}\label{L-long-LSP} For every $\beta>0$, we have $\Exp[|\LSP^L|]=O(n^{2-\beta/12}\ln n)$.
\end{lemma}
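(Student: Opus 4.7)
The plan is to bound $|\LSP^L|$ deterministically in terms of $|\SPP^L|$ and the maximum degree $\Delta$ of the essential subgraph $G^*$, and then take expectation, handling the dependence between these two random quantities by splitting on the event $\{\Delta\le c\ln n\}$.

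First, I would observe that every LSP $\pi$ can be written uniquely as $u\to u'\SP v'\to v$, so $l[\pi]=u\to u'\SP v'$ and $r[\pi]=u'\SP v'\to v$ are both shortest paths. By the definition of $\beta$-long LSP, at least one of $l[\pi], r[\pi]$ is a $\beta$-long shortest path. The LSP $\pi$ is obtained from $l[\pi]$ by appending a single edge $v'\to v$ that is itself a shortest path, and is obtained from $r[\pi]$ by prepending a single edge $u\to u'$ that is a shortest path. Hence, given any shortest path $\sigma$, the number of LSPs with $l[\pi]=\sigma$ is at most the out-degree of the last vertex of $\sigma$ in $G^*$, and the number with $r[\pi]=\sigma$ is at most the in-degree of the first vertex of $\sigma$ in $G^*$. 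Letting $\Delta^\ast$ denote the maximum of all in- and out-degrees in $G^*$, this gives the deterministic bound
\[
|\LSP^L| \;\le\; 2\,\Delta^\ast\cdot |\SPP^L|.
\]
By symmetry (the reverse of a random graph with i.i.d.\ weights has the same distribution as the original), Lemma~\ref{L-Delta} applied to both orientations and a union bound yield $\Pr[\Delta^\ast>c\ln n]=O(n^{1-c/6})$ for any $c>6$.

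Next, I would take expectations after splitting on the event $A=\{\Delta^\ast\le c\ln n\}$. On $A$, we bound $\Delta^\ast$ by $c\ln n$ and use Lemma~\ref{L-long-SP}. On the complement $A^c$, we use the crude deterministic bounds $\Delta^\ast\le n$ and $|\SPP^L|\le n^2$:
\[
\Exp[|\LSP^L|] \;\le\; 2c\ln n\cdot \Exp[|\SPP^L|] \;+\; 2n^3\cdot\Pr[\Delta^\ast>c\ln n]
\;=\; O\!\left(n^{2-\beta/12}\ln n\right)+O\!\left(n^{4-c/6}\right).
\]
Choosing the constant $c=12+\beta/2>6$ makes the second term $O(n^{2-\beta/12})$, which is absorbed into the first, giving the desired bound $\Exp[|\LSP^L|]=O(n^{2-\beta/12}\ln n)$.

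The main obstacle is that $\Delta^\ast$ and $|\SPP^L|$ are not independent (in fact, one expects positive correlation, since dense $G^*$ and many long shortest paths both reflect an atypical weight configuration), so the expectation of the product cannot be factored. The truncation-and-tail trick above sidesteps this: as long as we choose $c$ large enough in terms of $\beta$, the tail $\Pr[\Delta^\ast>c\ln n]$ is polynomially small enough to beat the trivial $n^3$ bound on the product, and on the high-probability event $\{\Delta^\ast\le c\ln n\}$ we only pay a $\log n$ factor over $\Exp[|\SPP^L|]$.
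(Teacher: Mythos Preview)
Your proof is correct and follows the same approach as the paper: bound $|\LSP^L|$ deterministically by (a constant times) $\Delta\cdot|\SPP^L|$, then split the expectation on $\{\Delta\le c\ln n\}$ using Lemma~\ref{L-Delta} on the tail and Lemma~\ref{L-long-SP} on the main term. In fact your choice $c=12+\beta/2$ is more careful than the paper's $c=12$, which only makes the tail term $n^{1-c/6}\cdot n^3=n^2$ rather than the stated $O(n^{2-\beta/12})$; your version actually yields the claimed bound.
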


\begin{proof} Using the same argument used in the proof of Lemma~\ref{L-simple}, we get that $|\LSP^L|\le\Delta |\SPP^L|$. By Lemma~\ref{L-Delta} we get
\[ \Exp[ |\LSP^L| ] \;\le\; \Exp[ \Delta |\SPP^L| ] \;\le\; c\ln n \cdot \Exp[ |\SPP^L| ] + n^{1-c/6} n^3\;.\]
Letting $c=12$ and using Lemma~\ref{L-long-SP} we get that $\Exp[|\LSP^L|]=O(n^{2-\beta/12}\ln n)$, as required.
\end{proof}

\begin{lemma}\label{L-Pr-LSP_L} For every $\beta>0$ we have $\Pr[|\LSP^L|\ge n^2]=O(n^{-\beta/12}\ln n)$.
\end{lemma}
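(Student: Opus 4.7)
The plan is to apply Markov's inequality directly to the bound established in Lemma~\ref{L-long-LSP}. Since $|\LSP^L|$ is a non-negative random variable and we already know $\Exp[|\LSP^L|]=O(n^{2-\beta/12}\ln n)$, Markov's inequality gives
\[ \Pr[|\LSP^L|\ge n^2] \;\le\; \frac{\Exp[|\LSP^L|]}{n^2} \;=\; O(n^{-\beta/12}\ln n), \]
which is exactly the claimed bound. There is no real obstacle here: the lemma is a one-line corollary of the preceding expectation estimate, and no additional concentration machinery (e.g., Efron--Stein, as used elsewhere in this section) is required at this step. The work has already been done in Lemmas~\ref{L-tail}, \ref{L-Delta}, \ref{L-long-SP} and \ref{L-long-LSP}; this lemma just converts the expectation bound into a tail bound via Markov.
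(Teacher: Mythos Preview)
Your proposal is correct and matches the paper's own proof essentially verbatim: the paper simply states that the lemma ``follows from Lemma~\ref{L-long-LSP} using Markov's inequality.'' There is nothing to add.
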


\begin{proof} Follows from Lemma~\ref{L-long-LSP} using Markov's inequality.
\end{proof}

We next show that $|\LSP^S|=O(n^2)$ with high probability. To do that we use the \emph{Efron-Stein inequality} (see, e.g., Boucheron \etal~\cite{BoLuBo03}) to bound $\Var[|\LSP^S|]$.

\begin{theorem}[Efron-Stein inequality]\label{T-Efron}
Let $Z=f(X_1,\ldots,X_m)$, where $X_1,X_2,\ldots,X_m$ are independent random variables. For any $1\le i\le m$, let $X'_i$ be a random variable with the same distribution as $X_i$ but independent from $X_1,X_2,\ldots,X_m$, and let $Z'_i=f(X_1,\ldots,X'_i,\ldots,X_m)$. Then,
$$\Var[Z] \;\le\; \frac12 \sum_{i=1}^m E\left[ \bigl(Z-Z'_i\bigr)^2\right].$$
\end{theorem}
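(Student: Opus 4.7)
The plan is to prove the Efron-Stein inequality by the standard two-step approach: first establish the \emph{tensorization of variance}, namely $\Var[Z] \le \sum_{i=1}^m \Exp\bigl[\Var[Z \mid X_{-i}]\bigr]$, where $X_{-i}$ denotes the tuple of all $X_j$ with $j \ne i$; then identify each term $\Exp\bigl[\Var[Z \mid X_{-i}]\bigr]$ with $\frac12 \Exp[(Z - Z'_i)^2]$. Summing and combining these two facts gives the claimed bound.

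The second step is short. Conditionally on $X_{-i}$, both $Z$ and $Z'_i$ depend on a single further coordinate distributed like $X_i$ (namely $X_i$ itself in the case of $Z$, and its independent copy $X'_i$ in the case of $Z'_i$), and these two coordinates are conditionally independent. Hence conditionally on $X_{-i}$ the pair $(Z, Z'_i)$ is i.i.d., and the elementary identity $\Exp[(Y-Y')^2] = 2\Var[Y]$ for i.i.d.\ $Y, Y'$ yields $\Var[Z \mid X_{-i}] = \frac12 \Exp[(Z - Z'_i)^2 \mid X_{-i}]$. Taking an unconditional expectation gives the identity.

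For the first step I would use the Doob martingale decomposition. Let $M_i := \Exp[Z \mid X_1, \ldots, X_i]$ with $M_0 := \Exp[Z]$, and set $D_i := M_i - M_{i-1}$. Then $Z - \Exp[Z] = \sum_{i=1}^m D_i$, and the orthogonality of martingale differences yields $\Var[Z] = \sum_i \Exp[D_i^2]$. It therefore suffices to show $\Exp[D_i^2] \le \Exp\bigl[\Var[Z \mid X_{-i}]\bigr]$ for each $i$. Conditioning on $X_1, \ldots, X_{i-1}$, writing $U = X_i$, $W = (X_{i+1}, \ldots, X_m)$, and viewing $Z$ as a function $g(U, W)$ under this conditioning, the desired inequality reduces to $\Var_U\bigl[\Exp_W[g(U,W)]\bigr] \le \Exp_W\bigl[\Var_U[g(U,W)]\bigr]$. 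The latter is a one-step Jensen fact: using the symmetric representation $\Var_U[h(U)] = \frac12 \Exp[(h(U_1) - h(U_2))^2]$ with $U_1, U_2$ i.i.d.\ copies of $U$, setting $h(U) = \Exp_W[g(U,W)]$, and applying the pointwise Jensen inequality $(\Exp_W[Y])^2 \le \Exp_W[Y^2]$ to $Y = g(U_1, W) - g(U_2, W)$ delivers the bound.

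The main obstacle, in the sense of being the conceptual core rather than a routine check, is the tensorization step. The identity in the second step is an elementary computation and the martingale decomposition is standard machinery; the heart of the argument is the one-step inequality $\Var_U[\Exp_W g] \le \Exp_W[\Var_U g]$, which captures the principle that replacing a coordinate by its conditional expectation cannot increase variance. Once this is in place, summing over $i$ and invoking the identity from the second step yields the bound with the explicit constant $\frac12$.
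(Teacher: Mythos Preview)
Your proof is correct; this is the standard textbook route to the Efron--Stein inequality (martingale decomposition plus tensorization, followed by the i.i.d.\ identity $\Exp[(Y-Y')^2]=2\Var[Y]$), and the one-step Jensen argument for $\Var_U[\Exp_W g]\le \Exp_W[\Var_U g]$ is carried out cleanly.

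There is, however, nothing to compare against: the paper does \emph{not} prove Theorem~\ref{T-Efron}. It is quoted as a known inequality with a reference to Boucheron, Lugosi and Bousquet~\cite{BoLuBo03}, and is then applied as a black box to bound $\Var[|\LSP^S|]$. So your write-up supplies a proof where the paper intentionally omits one; the approach you give is essentially the same as in the cited reference.
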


In our case, we have $m=n(n-1)$, $X_1,X_2,\ldots,X_m$ are the random edge weights, and $Z=|\LSP^S|$. For every edge $e$, we need to compute the second moment of the random variable $|\LSP^S_{e,0}|-|\LSP^S_{e,1}|$, where $\LSP^S_{e,0}$ and $\LSP^S_{e,1}$ are the sets of $\beta$-short LSPs when all edges other than~$e$ are assigned the \emph{same} random edge weights, while $e$ is assigned two independent edge weights. Due to symmetry, the second moment of $|\LSP^S_{e,0}|-|\LSP^S_{e,1}|$ does not depend on~$e$. For brevity, we write $\LSP^S_{0}$ and $\LSP^S_{1}$, instead of $\LSP^S_{e,0}$ and $\LSP^S_{e,1}$, when the $e$ is clear from the context.
We similarly define $\SPP^S_0$ and $\SPP^S_1$ to be the corresponding sets of $\beta$-short shortest paths.


If $A$ and~$B$ are two sets, then $||A|-|B||\le |A\oplus B|$, where $A\oplus B=(A\smallsetminus B) \cup (B\smallsetminus A)$ is the \emph{symmetric difference} of the two sets. We thus focus our attention on $\LSP^S_0\oplus \LSP^S_1$. We begin by looking at $\SPP^S_0\oplus \SPP^S_1$.
Let $\SPP^S_0(e)$ and $\SPP^S_1(e)$ be the set of $\beta$-short shortest paths that \emph{pass} through~$e$ with the two choices of the weight of~$e$.

\begin{lemma}\label{L-SPe} For every edge $e$ we have $|\SPP^S_0\oplus \SPP^S_1|\le 2(|\SPP^S_0(e)|+|\SPP^S_1(e)|)$.
\end{lemma}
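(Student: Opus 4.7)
The plan is to split $\SPP^S_0\oplus\SPP^S_1$ into paths that use the edge $e$ and paths that avoid it. Any path in $\SPP^S_0\oplus\SPP^S_1$ that traverses $e$ automatically lies in $\SPP^S_0(e)\cup\SPP^S_1(e)$, and so contributes at most $|\SPP^S_0(e)|+|\SPP^S_1(e)|$ to the count. The main work is to bound the number of paths in the symmetric difference that \emph{avoid} $e$; I plan to do this by constructing injections from each half of the symmetric difference into $\SPP^S_1(e)$ and $\SPP^S_0(e)$ respectively.

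Concretely, suppose $\pi\in\SPP^S_0\setminus\SPP^S_1$ does not use $e$. Because $\pi$ avoids $e$, its length is identical in both weight settings; call it $w$, with $w\le(1+\beta)\frac{\ln n}{n}$. Let $\pi'$ denote the unique shortest path in setting~$1$ between the endpoints of $\pi$. Its length in setting~$1$ is at most $w$, so $\pi'\in\SPP^S_1$. Moreover $\pi'\neq\pi$ (otherwise $\pi\in\SPP^S_1$), and $\pi'$ must traverse $e$: if it did not, the length of $\pi'$ would coincide in both settings, so by uniqueness of shortest paths it would be strictly smaller than $w$ in setting~$0$ as well, contradicting $\pi\in\SPP^S_0$. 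Hence $\pi'\in\SPP^S_1(e)$. The map $\pi\mapsto\pi'$ is injective because $\pi'$ is determined by its endpoints, which coincide with those of $\pi$, and distinct elements of $\SPP^S_0$ correspond to distinct endpoint pairs. A symmetric argument yields an injection from the paths in $\SPP^S_1\setminus\SPP^S_0$ that avoid $e$ into $\SPP^S_0(e)$.

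Combining the two estimates gives the claimed bound $|\SPP^S_0\oplus\SPP^S_1|\le 2(|\SPP^S_0(e)|+|\SPP^S_1(e)|)$. I do not expect any serious obstacle: the only delicate step is verifying that the replacement $\pi'$ is both $\beta$-short and passes through $e$, and both of these properties drop out of the single observation that the length of $\pi$ is invariant under reweighting $e$, combined with the uniqueness of shortest paths that is assumed throughout the paper.
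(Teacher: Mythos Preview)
Your proof is correct. It differs from the paper's argument, though both are short.

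The paper proceeds by assuming without loss of generality that $c_0(e)<c_1(e)$. Monotonicity then gives $\SPP^S_0\setminus\SPP^S_1\subseteq\SPP^S_0(e)$ directly: increasing the weight of~$e$ cannot destroy a shortest path that avoids~$e$. For the other half, the paper observes that each path in $\SPP^S_1\setminus\SPP^S_0$ replaces a path in $\SPP^S_0\setminus\SPP^S_1$ (the old shortest path between the same endpoints, which is $\beta$-short because distances only increased), so $|\SPP^S_1\setminus\SPP^S_0|\le|\SPP^S_0\setminus\SPP^S_1|\le|\SPP^S_0(e)|$. This gives $|\SPP^S_0\oplus\SPP^S_1|\le 2|\SPP^S_0(e)|$ under the assumption, and symmetry handles the other case.

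Your approach avoids the case split on the ordering of $c_0(e)$ and $c_1(e)$ by separating paths according to whether they use~$e$, and building the replacement injection in each direction. The paper's monotonicity trick is slightly slicker and in fact yields the marginally stronger bound $2\max\{|\SPP^S_0(e)|,|\SPP^S_1(e)|\}$, but this is not needed anywhere. Your argument is more symmetric and arguably more transparent, since it makes the ``replacement'' injection explicit in both directions rather than relying on the direction of the weight change.
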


\begin{proof} Let $c_0(e)$ and $c_1(e)$ be the two costs of~$e$. Suppose at first that $c_0(e)<c_1(e)$. A $\beta$-short shortest path that stops being $\beta$-short shortest path when the cost of~$e$ is increased from~$c_0(e)$ to~$c_1(e)$ must pass through~$e$. Thus, $\SPP^S_0\smallsetminus\SPP^S_1\subseteq \SPP^S_0(e)$ and hence $|\SPP^S_0\smallsetminus\SPP^S_1|\le |\SPP^S_0(e)|$. The only paths in $\SPP^S_1\smallsetminus\SPP^S_0$ are paths that replace paths from $\SPP^S_0\smallsetminus\SPP^S_1$. Thus, we also have $|\SPP^S_1\smallsetminus \SPP^S_0|\le |\SPP^S_0(e)|$. Under the assumption $c_0(e)<c_1(e)$ we thus get
$|\SPP^S_0\oplus \SPP^S_1|\le 2|\SPP^S_0(e)|$. If $c_0(e)>c_1(e)$, we similarly get that $|\SPP^S_0\oplus \SPP^S_1|\le 2|\SPP^S_1(e)|$. In both cases we have $|\SPP^S_0\oplus \SPP^S_1|\le 2(|\SPP^S_0(e)|+|\SPP^S_1(e)|)$.
\end{proof}




%
We next estimate $|\SPP^S_0(e)|$ and $|\SPP^S_1(e)|$. As they both have the same distribution, we omit the subscript.


\begin{lemma}\label{L-zero} For every $\beta>0$, we have $\Pr[|\SPP^S(e)|>0]=O(\frac{\ln n}{n})$.
\end{lemma}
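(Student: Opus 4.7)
The proof should be essentially a one-liner using Lemma~\ref{L-edge}, via the subpath optimality principle.

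The plan is to observe that the event $\{|\SPP^S(e)|>0\}$ is contained in the event $\{e \in E^*\}$, i.e., the event that the edge $e$ itself is a shortest path between its endpoints. Indeed, suppose $e = u \to v$ and some $\beta$-short shortest path $\pi = a \SP b$ passes through $e$. Write $\pi = (a \SP u) \to (u \to v) \to (v \SP b)$. By subpath optimality (which holds since we assume shortest paths are unique), the subpath of $\pi$ from $u$ to $v$ is itself the unique shortest path from $u$ to $v$. But this subpath is just the edge $e$, hence $e \in E^*$.

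Thus $\Pr[|\SPP^S(e)|>0] \le \Pr[e \in E^*]$, and by Lemma~\ref{L-edge} the latter equals $\frac{H_{n-1}}{n-1} = \frac{\ln n}{n}+O(\frac{1}{n}) = O(\frac{\ln n}{n})$, which gives the bound claimed in the lemma.

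There is no real obstacle here: the $\beta$-shortness condition only shrinks the event (a $\beta$-short shortest path through $e$ still forces $e$ to be a shortest edge), so we get the same asymptotic bound as for arbitrary shortest paths containing $e$. The reason this lemma is useful, rather than trivial, is that it is the variance-controlling estimate feeding into the Efron--Stein argument initiated above: combined with Lemma~\ref{L-SPe} (and an analogous bound on $|\LSP^S(e)|$ to be established next), it will let the authors show that a single edge resampling affects only $O(\log n)$ short shortest paths in expectation, which is what drives the $\Var[|\LSP^S|] = O(n^2)$ bound and hence the high-probability $O(n^2)$ bound on $|\LSP|$.
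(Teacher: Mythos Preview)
Your proof is correct and essentially identical to the paper's: both argue that if any shortest path passes through $e$, then by subpath optimality $e$ itself must be a shortest path between its endpoints, and then invoke Lemma~\ref{L-edge}. Your closing commentary about the variance bound is slightly off (the paper obtains $\Var[|\LSP^S|]=O(n^{2(1+\beta')+1}\ln^3 n)$, not $O(n^2)$), but this does not affect the proof of the lemma itself.
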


\begin{proof}
The set $\SPP^S(e)$ is non-empty only if $e$ is a shortest path between its endpoints, which by Lemma~\ref{L-edge} only happens with probability $\frac{\ln n}{n}+O(\frac{1}{n})$.
\end{proof}

Our next goal is to show that $|\SPP^S(e)|={O}(n^{1+\beta'})$, with high probability, for any $\beta'>\beta$.





\begin{lemma}\label{L-SPP-e} For every $\beta>0$, and every $\beta'>\beta$, we have $\Pr[ |\SPP^S(e)| >
n^{1+\beta'}] = O(n^{-c})$, for every $c>0$.
\end{lemma}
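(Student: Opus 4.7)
The plan is to parametrize each path in $\SPP^S(e)$ by its endpoints. Write $e=(x,y)$. Every path in $\SPP^S(e)$ has the form $a\SP x\to y\SP b$ with total cost $D(a,x)+W(e)+D(y,b)\le (1+\beta)\frac{\ln n}{n}$; in particular,
\[D(a,x) + D(y,b) \;\le\; r, \qquad r:=(1+\beta)\tfrac{\ln n}{n}.\]
So it suffices to bound the number of pairs $(a,b)\in V^2$ that satisfy this inequality. The basic intuition is that if one of $D(a,x),D(y,b)$ is large then the other must be small, so we cannot afford the trivial bound $|Ball^-(x,r)|\cdot |Ball(y,r)|$; we must exploit the trade-off.

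To do this, I would discretize. Fix $\epsilon>0$ with $3\epsilon<\beta'-\beta$ and set $K=\lceil(1+\beta)/\epsilon\rceil$. For each $k\in\{1,\ldots,K\}$, let $N_k=\{a: D(a,x)\le k\epsilon\frac{\ln n}{n}\}$ and $M_k=\{b: D(y,b)\le (1+\beta-(k-1)\epsilon)\frac{\ln n}{n}\}$. Any admissible $a$ lies in some $N_k\setminus N_{k-1}$, and then forcing $D(a,x)+D(y,b)\le r$ puts $b$ into $M_k$. Thus
\[|\SPP^S(e)|\;\le\;\sum_{k=1}^{K}|N_k|\cdot|M_k|.\]
Now I would apply Corollary~\ref{C-alpha} to each ball: with probability $1-O(n^{-c'})$ for any $c'$, $|N_k|\le n^{\min(k\epsilon,1)+\epsilon}$ and $|M_k|\le n^{\min(1+\beta-(k-1)\epsilon,1)+\epsilon}$. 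Note that for $x$ we really want the ``in-ball'' at $x$, but its distribution matches that of the out-ball at $x$ by symmetry of the independent edge-weight model, so the corollary applies.

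A short case analysis (splitting on whether $k\epsilon\le 1$ and whether $(k-1)\epsilon\ge\beta$) shows that in every case the exponents telescope: when both radii are at most $\frac{\ln n}{n}$ the product of bounds is $n^{1+\beta+3\epsilon}$, and in the boundary cases where one ball hits the trivial cap of $n$, the corresponding radius for the other ball is small enough that the product is still at most $n^{1+\beta+3\epsilon}$. Summing over the $K=O(1/\epsilon)=O(1)$ values of $k$ gives $|\SPP^S(e)|\le K\cdot n^{1+\beta+3\epsilon}\le n^{1+\beta'}$ for $n$ large enough. A union bound over the $2K$ ball-size events yields the failure probability $O(n^{-c})$ for any $c>0$.

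The only real obstacle is the boundary bookkeeping: Corollary~\ref{C-alpha} only gives a non-trivial bound when the radius is at most $\frac{\ln n}{n}$, so for $k$ near $1/\epsilon$ (or $k$ small) one must replace one of the two ball bounds by the trivial bound~$n$ and verify that the complementary ball has shrunk enough to keep the product at $n^{1+\beta+3\epsilon}$. Once this case check is done, the rest is a mechanical summation and a union bound.
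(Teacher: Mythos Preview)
Your proposal is correct and is essentially the same argument as the paper's: discretize the constraint $D(a,x)+D(y,b)\le(1+\beta)\frac{\ln n}{n}$ into layers, cover the set of admissible pairs by a union of products of balls, and apply Corollary~\ref{C-alpha} to each ball. The paper uses $r$ layers of width $(1+\beta)/r$ where you use $K$ layers of width~$\epsilon$, but the structure is identical.

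One simplification worth noting: your ``boundary bookkeeping'' is unnecessary. The bound $|Ball(\,\cdot\,,\alpha\frac{\ln n}{n})|\le n^{\alpha+\epsilon}$ is \emph{trivially} true when $\alpha\ge 1$, since every ball has at most $n$ vertices. So you may simply write $|N_k|\le n^{k\epsilon+\epsilon}$ and $|M_k|\le n^{(1+\beta-(k-1)\epsilon)+\epsilon}$ for all~$k$, and the product exponent telescopes to $1+\beta+3\epsilon$ uniformly, with no case split. This is exactly how the paper sidesteps the issue.
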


\begin{proof} Let $e=a\to b$ be a fixed edge.
Let $C$ be the set of pairs $(u,v)$ such that $u\SP a\to b\SP v$ is a shortest path of length at most $\frac{(1+\beta)\ln n}{n}$. Clearly $|\SPP^S(e)|=|C|$.
For a fixed integer $r$, and $1\le i\le r$, let $$A_i=\left\{u\in V \,\left|\, D(u,a)\le \frac{i(1+\beta)}{r}\frac{\ln n}{n}\right.\right\} \quad,\quad B_i=\left\{v\in V \,\left|\, D(b,v)\le \frac{i(1+\beta)}{r}\frac{\ln n}{n}\right.\right\}$$ be the sets of vertices of distances at most $\frac{i(1+\beta)}{r}\frac{\ln n}{n}$ to~$a$ and from~$b$, respectively.
Note that $B_i=Ball\left(b,\frac{i(1+\beta)}{r}\frac{\ln n}{n}\right)$, while $A_i=Ball\left(a,\frac{i(1+\beta)}{r}\frac{\ln n}{n}\right)$, in the graph in which all edge directions are reversed. Clearly
$$C \;\subseteq\; \bigcup_{i=1}^{r} A_i\times B_{r+1-i}\;.$$
By Corollary~\ref{C-alpha}, we have $|A_i|,|B_i|\le n^{(1+\beta)i/r+\epsilon}$, for every~$i$, with
a probability of at least $1-O(r n^{-c})$, for every $c>0$.
It thus follows that $|C|\le r n^{(1+\beta)(1+1/r)+2\epsilon}$, again with this very high probability. Letting~$r$ sufficiently large and~$\epsilon$ sufficiently small, we get the claim of the lemma.
\end{proof}

Lemmas~\ref{L-zero} and~\ref{L-SPP-e} allow us to bound $\Exp[|\SPP^S(e)|^2]$.

\begin{lemma}\label{L-E-SPP-2} For every $\beta>0$, and every $\beta'>\beta$ we have $\Exp[|\SPP^S(e)|^2]=O(n^{2(1+\beta')-1}\ln n)$.
\end{lemma}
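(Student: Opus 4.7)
The plan is to exploit the two lemmas just proved: Lemma~\ref{L-zero} says that the event $\{|\SPP^S(e)|>0\}$ has probability only $O(\tfrac{\ln n}{n})$, and Lemma~\ref{L-SPP-e} says that \emph{conditional} on something happening at all, $|\SPP^S(e)|$ is at most $n^{1+\beta'}$ except on an event of probability $O(n^{-c})$ for arbitrarily large $c$. So I will split the expectation according to these two events and a trivial tail bound.

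First I would record the deterministic bound $|\SPP^S(e)| \le n^2$: indeed each element of $\SPP^S(e)$ is a shortest path $u \SP v$ through $e$, and under the uniqueness assumption there are at most $n^2$ shortest paths in total. Hence $|\SPP^S(e)|^2 \le n^4$ always.

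Next I would split
\[
\Exp[|\SPP^S(e)|^2] \;=\; \Exp\bigl[|\SPP^S(e)|^2 \cdot \mathbf{1}_{\{0 < |\SPP^S(e)| \le n^{1+\beta'}\}}\bigr] + \Exp\bigl[|\SPP^S(e)|^2 \cdot \mathbf{1}_{\{|\SPP^S(e)| > n^{1+\beta'}\}}\bigr].
\]
For the first term, bound $|\SPP^S(e)|^2$ by $n^{2(1+\beta')}$ and the probability of the event by $\Pr[|\SPP^S(e)|>0] = O(\tfrac{\ln n}{n})$ from Lemma~\ref{L-zero}, giving $O(n^{2(1+\beta')-1}\ln n)$. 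For the second term, bound $|\SPP^S(e)|^2$ by the deterministic $n^4$ and use Lemma~\ref{L-SPP-e} to bound the probability by $O(n^{-c})$; choosing $c$ large enough (say $c \ge 5$) makes this contribution $O(n^{-1})$, which is absorbed into the main term. Adding the two pieces yields the claimed bound $O(n^{2(1+\beta')-1}\ln n)$.

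There is no real obstacle here: it is a straightforward truncation argument combining the ``rare nonzero'' estimate from Lemma~\ref{L-zero} with the ``high-probability polynomial'' estimate from Lemma~\ref{L-SPP-e}. The only care needed is to make sure the crude deterministic bound $n^4$ (or any polynomial bound) together with the $n^{-c}$ tail, for $c$ chosen sufficiently large, is dominated by the main contribution $n^{2(1+\beta')-1}\ln n$; this works because $c$ in Lemma~\ref{L-SPP-e} is arbitrary.
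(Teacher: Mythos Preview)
Your proposal is correct and is essentially identical to the paper's proof: both split the second moment according to whether $|\SPP^S(e)|\le n^{1+\beta'}$ or not, bound the first piece using Lemma~\ref{L-zero} and the second using Lemma~\ref{L-SPP-e} together with the trivial bound $|\SPP^S(e)|\le n^2$. The only cosmetic difference is that the paper takes $c=4$ while you take $c\ge 5$, which makes no difference.
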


\begin{proof} For succinctness, let $X=|\SPP^S(e)|$ and $a=n^{1+\beta'}$.
We always have $X^2\le n^4$. Using Lemma~\ref{L-SPP-e} with $c=4$, we have
$$
\Exp[X^2] \;\le\; \Pr[0<X\le a]\cdot a^2 + \Pr[x>a]\cdot n^4 \;=\; O\left(\frac{\ln n}{n}\cdot n^{2(1+\beta')} + n^{-4}\cdot n^4\right) \;=\; O\left(n^{2(1+\beta')-1}\ln n\right).$$
\end{proof}

We can finally get back to estimating $\LSP^S_0\oplus \LSP^S_1$. Let $\Delta_0$ and $\Delta_1$ be the maximum outdegrees in the essential graph, i.e., the subgraph composed of the edges that are shortest paths, under the two independent choices of the weight of~$e$. Let $\Delta=\max\{\Delta_0,\Delta_1\}$. By Lemma~\ref{L-Delta} we have
$\Pr[\Delta>c\ln n]=O(n^{1-c/6})$, for every $c>6$.

\begin{lemma}\label{L-oplus} For every $\beta>0$ we have $|\LSP^S_0\oplus \LSP^S_1|\le 2\Delta\cdot |\SPP^S_0\oplus \SPP^S_1|$.
\end{lemma}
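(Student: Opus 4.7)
The plan is to charge each element of $\LSP^S_0\oplus\LSP^S_1$ to an element of $\SPP^S_0\oplus\SPP^S_1$, exploiting the fact that every LSP $\pi$ decomposes both as $\pi = l[\pi]\circ e$ and as $\pi = e'\circ r[\pi]$, where $e,e'$ are single edges and $l[\pi],r[\pi]$ are shortest paths. I would begin with two preliminary observations. (i) Both the first and last edges of any LSP lie in the corresponding essential graph $G^*$: if $\pi = l[\pi]\circ e$ is an LSP, then $r[\pi]$ is a shortest path whose last edge is $e$, and a single-edge subpath of a shortest path is itself a shortest path; the symmetric statement handles the first edge. (ii) If $\pi\in\LSP^S_0\oplus\LSP^S_1$, say $\pi\in\LSP^S_0\smallsetminus\LSP^S_1$, then $l[\pi],r[\pi]\in\SPP^S_0$ by the definition of $\beta$-short LSP; if both of them also lay in $\SPP^S_1$, then $\pi$ would also be in $\LSP^S_1$, a contradiction. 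Hence at least one of $l[\pi],r[\pi]$ belongs to $\SPP^S_0\oplus\SPP^S_1$.

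Using (ii), I would assign to each $\pi\in\LSP^S_0\oplus\LSP^S_1$ a ``charge'' $\sigma\in\SPP^S_0\oplus\SPP^S_1$, taking $\sigma = l[\pi]$ (a \emph{right} extension, $\pi=\sigma\circ e$) whenever possible and $\sigma = r[\pi]$ (a \emph{left} extension, $\pi=e'\circ\sigma$) otherwise. Fix $\sigma\in\SPP^S_0\oplus\SPP^S_1$, and suppose without loss of generality that $\sigma\in\SPP^S_i\smallsetminus\SPP^S_{1-i}$ for some $i\in\{0,1\}$. To count right charges at $\sigma$: any $\pi = \sigma\circ e$ with $\pi\in\LSP^S_{1-i}$ would force $l[\pi]=\sigma\in\SPP^S_{1-i}$, contradicting our choice of $i$; hence any such $\pi$ lies in $\LSP^S_i$, and by (i) the last edge $e$ lies in $G^*_i$, giving at most $\Delta_i\le\Delta$ options. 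The same argument, now applied to the \emph{in-degree} in $G^*_i$, bounds the number of left charges landing at $\sigma$ by $\Delta$. Summing over $\sigma$ and over the two sides yields the desired $|\LSP^S_0\oplus\LSP^S_1|\le 2\Delta\cdot|\SPP^S_0\oplus\SPP^S_1|$.

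The main subtlety is the asymmetry exploited in the count above: a naive argument would allow the extending edge $e$ to live anywhere in $G^*_0\cup G^*_1$, giving $\Delta_0+\Delta_1\le 2\Delta$ options per side and a total loss of $4\Delta$. The savings come from the fact that $\sigma$ lies in exactly one of $\SPP^S_0,\SPP^S_1$, which by the definition of $\LSP^S$ forces the extension to be an LSP in that single setting and therefore the extending edge to lie in a single essential graph. A minor technical point is that $\Delta$ is defined as the maximum out-degree of $G^*$; for the left-charge count one also needs the analogous bound on the max in-degree, which holds with the same high probability because the proof of Lemma~\ref{L-Delta} applies verbatim with the roles of the two endpoints swapped.
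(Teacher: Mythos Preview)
Your proof is correct and is essentially the same charging argument as the paper's: the paper simply splits into the two set differences $\LSP^S_i\smallsetminus\LSP^S_{1-i}$ up front (which immediately pins down the side~$i$ that you track per charge) and then observes that each $\sigma\in\SPP^S_i\smallsetminus\SPP^S_{1-i}$ has at most $2\Delta_i$ LSP extensions in setting~$i$. Your remark about the in-degree is apt---the paper's stated definition of~$\Delta$ as the maximum out-degree glosses over this, but the same bound holds by symmetry.
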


\begin{proof} Suppose that $\pi\in \LSP^S_0\smallsetminus \LSP^S_1$. Then either $l[\pi]\in \SPP^S_0\smallsetminus \SPP^S_1$ or $r[\pi]\in \SPP^S_0\smallsetminus \SPP^S_1$. Each shortest path in $\SPP^S_0$ has at most $\Delta_0$ pre-extensions and at most $\Delta_0$ post-extensions that are locally shortest paths. Thus,
$|\LSP^S_0\smallsetminus \LSP^S_1|\le 2\Delta_0 |\SPP^S_0\smallsetminus \SPP^S_1|$. Similarly, $|\LSP^S_1\smallsetminus \LSP^S_0|\le 2\Delta_1 |\SPP^S_1\smallsetminus \SPP^S_0|$, and the lemma follows.
\end{proof}

\begin{lemma} For every $\beta>0$, every $\beta'>\beta$, and every edge~$e$ we have \[ \Exp\left[\bigl||\LSP^S_1|-|\LSP^S_0|\bigr|^2\right] =
O\left(n^{2(1+\beta')-1}\ln^3 n\right).\]
\end{lemma}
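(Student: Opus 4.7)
The plan is to bound $|Z|:=\bigl||\LSP^S_1|-|\LSP^S_0|\bigr|$ pointwise by combining the three structural lemmas just proved, and then to take expectations carefully. First I would use the elementary set-theoretic inequality $|Z|\le |\LSP^S_0\oplus\LSP^S_1|$. Then Lemma~\ref{L-oplus} gives $|Z|\le 2\Delta\cdot|\SPP^S_0\oplus\SPP^S_1|$ where $\Delta=\max\{\Delta_0,\Delta_1\}$, and Lemma~\ref{L-SPe} further gives $|Z|\le 4\Delta\bigl(|\SPP^S_0(e)|+|\SPP^S_1(e)|\bigr)$. Squaring and using $(a+b)^2\le 2(a^2+b^2)$ yields the pointwise bound
\[
Z^2 \;\le\; 32\,\Delta^2\bigl(|\SPP^S_0(e)|^2+|\SPP^S_1(e)|^2\bigr).
\]

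The main obstacle is that $\Delta$ and $|\SPP^S_i(e)|$ are functions of overlapping sets of edge weights and hence are not independent, so expectations cannot be factored directly. To get around this I would split the expectation according to whether $\Delta$ is typical or exceptionally large. Fix a constant $c$ to be chosen later and decompose
\[
\Exp[Z^2] \;=\; \Exp\!\left[Z^2\cdot\mathbb{1}_{\{\Delta\le c\ln n\}}\right] + \Exp\!\left[Z^2\cdot\mathbb{1}_{\{\Delta> c\ln n\}}\right].
\]
On the event $\{\Delta\le c\ln n\}$, the factor $\Delta^2$ is dominated by $c^2\ln^2 n$, and so this contribution is at most $32c^2\ln^2 n\cdot\bigl(\Exp[|\SPP^S_0(e)|^2]+\Exp[|\SPP^S_1(e)|^2]\bigr)$. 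By symmetry both of these expectations equal $\Exp[|\SPP^S(e)|^2]$, which by Lemma~\ref{L-E-SPP-2} is $O(n^{2(1+\beta')-1}\ln n)$. This already gives the advertised bound $O(n^{2(1+\beta')-1}\ln^3 n)$ for the typical part.

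For the tail event $\{\Delta>c\ln n\}$, I would use the crudest available bounds: $\Delta\le n$ and $|\SPP^S_i(e)|\le n^2$ (since the total number of shortest paths is at most $n^2$ under uniqueness), so $Z^2\le 64n^6$ deterministically. By Lemma~\ref{L-Delta} applied to $\Delta_0$ and $\Delta_1$ with a union bound, $\Pr[\Delta>c\ln n]=O(n^{1-c/6})$. Choosing $c$ large enough (any $c>36$ works comfortably) makes the tail contribution $O(n^{7-c/6})=o(n^{2(1+\beta')-1})$, so it is absorbed into the main term. Putting the two pieces together gives the claim. The only slightly delicate point is to confirm that Lemma~\ref{L-E-SPP-2} applies to both of the sets $\SPP^S_0(e)$ and $\SPP^S_1(e)$ individually, which it does since each is distributionally a copy of $\SPP^S(e)$ under one of the two independent choices of weight for $e$.
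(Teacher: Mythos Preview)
Your proof is correct and follows essentially the same route as the paper: chain Lemmas~\ref{L-SPe} and~\ref{L-oplus} to obtain the pointwise bound $Z^2\le 32\,\Delta^2\bigl(|\SPP^S_0(e)|^2+|\SPP^S_1(e)|^2\bigr)$, split the expectation according to $\{\Delta\le c\ln n\}$, apply Lemma~\ref{L-E-SPP-2} on the typical event, and absorb the tail via Lemma~\ref{L-Delta}. The only difference is in bookkeeping---the paper takes $c=24$ and writes the crude tail bound as $n^4$, while you use $c>36$ with the deterministic bound $Z^2\le 64n^6$; your version of the tail estimate is slightly more careful.
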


\begin{proof} By Lemmas~\ref{L-SPe} and~\ref{L-oplus} we have
$$\bigl||\LSP^S_1|-|\LSP^S_0|\bigr|^2 \;\le\; \bigl| \LSP^S_0 \oplus \LSP^S_1 \bigr|^2 \;\le\;
4\Delta^2 \bigl| \SPP^S_0 \oplus \SPP^S_1 \bigr|^2$$ $$ \;\le\;
16\Delta^2 \bigl( |\SPP^S_0(e)| + |\SPP^S_1(e)| \bigr)^2 \;\le\; 32\Delta^2 ( |\SPP^S_0(e)|^2 + |\SPP^S_1(e)|^2).$$
Since both $|\SPP^S_0(e)|, |\SPP^S_1(e)| \leq n^2$, from Lemma ~\ref{L-Delta} with $c=24$ we have
\begin{eqnarray*}
\Exp\left[\bigl||\LSP^S_1|-|\LSP^S_0|\bigr|^2\right] &\leq& O\left( \ln ^2 n\cdot \Exp[|\SPP^S(e)|^2]+ \Pr[\Delta>24\ln n] \cdot n^4\right)\\
&=&
O\left(\ln ^2 n\cdot \Exp[|\SPP^S(e)|^2]+n^{-3} \cdot n^4\right).
\end{eqnarray*}
The claim now follows from Lemma~\ref{L-E-SPP-2}.
\end{proof}

Using the Efron-Stein inequality (Theorem~\ref{T-Efron}) we thus get:

\begin{lemma}\label{L-Var} For every $\beta>0$ and every $\beta'>\beta$ we have $\Var\left[|\LSP^S|\right] = O\left(n^{2(1+\beta')+1} \ln^3 n\right)$.
\end{lemma}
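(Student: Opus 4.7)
The plan is to invoke the Efron--Stein inequality (Theorem~\ref{T-Efron}) directly, with $Z = |\LSP^S|$ viewed as a function $f(W_1,\ldots,W_m)$ of the $m = n(n-1)$ independent edge weights $W_i$. For each edge $e$, resampling its weight yields the coupled pair $(|\LSP^S_0|, |\LSP^S_1|)$ considered in the preceding lemma, so the individual Efron--Stein term $\Exp[(Z - Z'_e)^2]$ is exactly $\Exp\bigl[\bigl||\LSP^S_0|-|\LSP^S_1|\bigr|^2\bigr]$.

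First I would observe that this second moment does not depend on the particular edge $e$, by the symmetry of the edge weight distribution under permutations of the edges; all $m$ Efron--Stein terms are therefore equal. Next I would quote the bound from the immediately preceding lemma, namely
\[ \Exp\bigl[\bigl||\LSP^S_1|-|\LSP^S_0|\bigr|^2\bigr] \;=\; O\bigl(n^{2(1+\beta')-1}\ln^3 n\bigr). \]
Finally, I would sum this bound over all $m = n(n-1) = O(n^2)$ edges and apply Theorem~\ref{T-Efron} to conclude
\[ \Var[|\LSP^S|] \;\le\; \frac{1}{2}\sum_{e} \Exp\bigl[\bigl||\LSP^S_1|-|\LSP^S_0|\bigr|^2\bigr] \;=\; O\bigl(n^2 \cdot n^{2(1+\beta')-1}\ln^3 n\bigr) \;=\; O\bigl(n^{2(1+\beta')+1}\ln^3 n\bigr), \]
which is exactly the claimed bound.

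There is essentially no obstacle at this stage: the work has already been done in the sequence of lemmas bounding $|\SPP^S(e)|$, $|\SPP^S_0 \oplus \SPP^S_1|$ and $|\LSP^S_0 \oplus \LSP^S_1|$, and in extracting a second-moment estimate from them. The only point deserving a line of explanation is the symmetry argument that justifies summing the same bound $m$ times rather than bounding each term separately, so I would make that explicit but not dwell on it.
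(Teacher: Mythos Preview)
Your proposal is correct and is essentially identical to the paper's own proof, which simply states that the lemma follows from the Efron--Stein inequality applied to the preceding second-moment bound. You have just made the symmetry argument and the summation over the $n(n-1)$ edges explicit, which the paper leaves implicit.
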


\begin{theorem}\label{T-hp} There is a constant $c$ such that $\Pr[|\LSP|\ge cn^2]=O(n^{-1/26})$.
\end{theorem}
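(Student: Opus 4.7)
My plan is to combine the two tools already established: Lemma~\ref{L-Pr-LSP_L}, which controls the contribution of $\beta$-long LSPs in probability, and the Efron--Stein variance bound of Lemma~\ref{L-Var}, which controls the fluctuations of $|\LSP^S|$. Since $|\LSP|=|\LSP^L|+|\LSP^S|$, the strategy is a union bound
\[
\Pr[|\LSP|\ge cn^2] \;\le\; \Pr[|\LSP^L|\ge n^2] \;+\; \Pr[|\LSP^S|\ge (c-1)n^2],
\]
and to optimize the parameter $\beta$ so that both terms decay polynomially at the same rate.

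For the first term, Lemma~\ref{L-Pr-LSP_L} gives $\Pr[|\LSP^L|\ge n^2]=O(n^{-\beta/12}\ln n)$. For the second term, I would first note that $\Exp[|\LSP^S|]\le \Exp[|\LSP|]=O(n^2)$ by Theorem~\ref{T-LSP}, so there is a constant $C$ with $\Exp[|\LSP^S|]\le Cn^2$. Applying Chebyshev's inequality and Lemma~\ref{L-Var} then gives, for any $\beta'>\beta$,
\[
\Pr\bigl[|\LSP^S| \ge (C+1) n^2\bigr] \;\le\; \frac{\Var[|\LSP^S|]}{n^4} \;=\; O\bigl(n^{2\beta'-1}\ln^3 n\bigr).
\]
Choosing $c=C+2$, the two contributions are $O(n^{-\beta/12}\ln n)$ and $O(n^{2\beta'-1}\ln^3 n)$ respectively.

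The only remaining task is to balance the exponents. Setting $\beta/12 = 1-2\beta'$ and pushing $\beta'$ down to $\beta$ (i.e.\ taking $\beta'=\beta+o(1)$) leads to $\beta = 6/13$ and a common exponent $\beta/12=1/26$. To absorb the $\ln^3 n$ factor, I would take $\beta$ a hair larger than $6/13$ (so $\beta/12$ strictly exceeds $1/26$), which then yields $\Pr[|\LSP|\ge cn^2]=O(n^{-1/26})$ for a suitable constant~$c$. The main obstacle is simply checking that the range of admissible $\beta'$ in Lemma~\ref{L-Var} (we need $\beta'>\beta$ but also $\beta'<1/2$ to get a nontrivial Chebyshev bound) is compatible with the optimal choice $\beta=6/13$; this is exactly why the exponent $1/26$, rather than something larger, appears in the statement, and it is the constraint that forces the split between $\beta$-short and $\beta$-long LSPs in the first place.
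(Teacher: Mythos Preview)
Your argument is correct and follows exactly the paper's route: split $|\LSP|=|\LSP^L|+|\LSP^S|$, use Lemma~\ref{L-Pr-LSP_L} for the long part, and Chebyshev together with the Efron--Stein variance bound (Lemma~\ref{L-Var}) for the short part. One small arithmetic slip: solving $\beta/12=1-2\beta$ actually gives $\beta=12/25$ (the value the paper chooses), not $6/13$, and the balanced exponent is then $1/25$, relaxed to $1/26$ to swallow the logarithms; your choice $\beta\approx 6/13$ still works (the Chebyshev term then has exponent near $1/13$, comfortably better than $1/26$), so the conclusion is unaffected.
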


\begin{proof} Let $\beta=\frac{12}{25}$. By Lemma~\ref{L-Pr-LSP_L} we get that
\[\Pr[|\LSP^L|\ge n^2] \;=\; O(n^{-\beta/12}\ln n) \;=\; O(n^{-1/25}\ln n) \;.\eqno{(7)}\]
Let $\beta'=\beta+\epsilon$, where $\epsilon>0$ is tiny.
By Lemma~\ref{L-Var} we get that
\[\Var\left[|\LSP^S|\right] \;=\; O\left(n^{2(1+\beta')+1} \ln^3 n\right) \;=\; O(n^{99/25+2\epsilon}\ln^3 n)
\;=\; O(n^{99/25+3\epsilon})
.\]
By Theorem~\ref{T-LSP} we have $\Exp[|\LSP|]=\Theta(n^2)$. By Lemma~\ref{L-long-LSP}, we have $\Exp[|\LSP^L|=o(n^2)$. As
$\Exp[|\LSP|] = \Exp[|\LSP^S|]+\Exp[|\LSP^L|]$, we get that $\Exp[|\LSP^S|]=\Theta(n^2)$.

By Chebyshev's inequality (see, e.g., \cite{MiUp05}), for every random variable~$X$ we have \[\Pr[X\ge 2\Exp[X]] \;\le\; \Pr[|X-\Exp[X]|\ge \Exp[X]]\;\le\; \frac{\Var[X]}{\Exp[X]^2}\;.\]
For $X=|\LSP^S|$, and using the facts that $\Exp\left[|\LSP^S|\right]=\Theta(n^2)$ and
$\Var[|\LSP^S|]=O(n^{99/25+3\epsilon})$, we thus get
\[ \Pr\left[|\LSP^S|\ge 2\Exp[|\LSP^S|]\right] \;\le\; \frac{\Var\left[|\LSP^S|\right]}{\Exp\left[|\LSP^S|\right]^2} \;=\; O(n^{-1/25+3\epsilon})\;. \eqno{(8)}
\]
As $|\LSP|=|\LSP^S|+|\LSP^L|$,
combining (7) and (8) and choosing $\epsilon$ small enough, we get the claim of the Theorem.
\end{proof}

We believe that for every $a>0$ there exists~$c$ such that $\Pr\left[|\LSP|\ge cn^2\right]=O(n^{-a})$.
Proving, or disproving, this claim would require new techniques.

\section{An $O(n^2)$-time implementation}\label{S-heap}

In this section we describe an implementation of the algorithm of Section~\ref{SS-static} (and Appendix~\ref{S-static}) that runs in $O(n^2)$ time in expectation and with high probability. This is done using
a simple observation of Dinic \cite{Dinic78} and a simple bucket-based priority queue implementation that goes back to Dial \cite{Dial69}.

Let $\delta=\min_{(u,v)\in E} c(u,v)$ be the minimal edge weight in the graph. We claim that algorithm \apsp\ of Section~\ref{SS-static} remains correct if instead of requiring that the pair $(u,v)$ extracted from the heap~$Q$ is a pair with minimal $dist(u,v)$, we only require that
$dist(u,v)<dist(u',v')+\delta$ for every other pair $(u',v')$ in~$Q$. The proof is a simple modification of the proof of Theorem~\ref{T-APSP-2} given in Appendix~\ref{S-static}. This observation, in the context of Dijkstra's algorithm, dates back to Dinic \cite{Dinic78}. Along with many more ideas, this observation forms the basis for the linear \emph{worst-case} time single-source shortest paths algorithm for undirected graphs obtained by Thorup \cite{Thorup99}. It is also used by Hagerup \cite{Hagerup06} to obtain a simple linear \emph{expected} time algorithm for single source shortest paths, simplifying results of Meyer \cite{Meyer03} and Goldberg \cite{Goldberg08}.

In our setting, edge weights are drawn independently and uniformly at random from $[0,1]$. The probability that the minimal edge weight is smaller than $n^{-2.5}$ is clearly at most $n^{-0.5}$. If this unlikely event happens, we simply use an $O(n^2\log n)$ time implementation based on Fibonacci heaps.
This only contributes $o(n^2)$ to the expected running time of the algorithm.

We assume now that $\delta \ge n^{-2.5}$. For every $u,v\in V$, we let $dist'(u,v)=\lfloor dist(u,v)/\delta \rfloor$ and use $dist'(u,v)$, instead of $dist(u,v)$, as the key of $(u,v)$ in~$Q$.

We implement the heap~$Q$ as follows. (There are many possible variants. We describe the one that seems to be the most natural.) We use $L=n^2$ buckets $B_1,B_2,\ldots,B_L$. Bucket $B_i$, for $i<L$, is a linked list holding pairs $(u,v)$ for which $dist'(u,v)=i$. Bucket $B_L$ is a special \emph{leftover} bucket that holds all pairs $(u,v)$ for which $dist'(u,v)\ge L$. It is again implemented as a linked list. We also maintain the index~$k$ of the bucket from which the last minimal pair was extracted.

The implementation of a \heapinsert\ operation is trivial.
To insert a pair $(u,v)$ into~$Q$, we simply add $(u,v)$ to $B_i$, where $i=\min\{dist'(u,v),L\}$.

A \decreasekey\ operation is also simple. We simply remove $(u,v)$ from its current bucket and move it to the appropriate bucket. (Each pair has a pointer to its position in its current bucket, so these operations take constant time.)

An \extractmin\ operation is implemented as follows. We sequentially scan the buckets, starting from~$B_k$, until we find the first non-empty bucket. If the index of this bucket is less than~$L$, we return an arbitrary element from this bucket and update~$k$ if necessary. If the first non-empty bucket is $B_L$, the leftover bucket, we insert all the elements currently in $B_L$ into a comparison-based heap and use it to process all subsequent heap operations. (We show below that in our setting, we would very rarely encounter this case.)

This implementation of the \extractmin\ operation is correct as the priority queue that we need to maintain is \emph{monotone}, in the sense that the minimal key contained in the priority queue never decreases. This follows immediately from then fact that keys of new pairs inserted into~$Q$, or decreased keys of existing pairs in~$Q$ are always larger than the key of the last extracted pair.

The total time spent on implementing all heap operations, until all buckets $B_1,\ldots,B_{L-1}$ are empty, is clearly $O(N+L)$, where $N$ is the number of heap operations performed. By Theorem~\ref{T-APSP-2} we have $N=O(|\LSP|+n^2)$. By Theorem~\ref{T-LSP} we have $\Exp[|\LSP|]=O(n^2)$.  By Theorem~\ref{T-hp}, there is constant~$c$ such that $\Pr[|\LSP|\ge cn^2] = O(n^{-1/60})$. As $L=n^2$, the number of operations here is $O(n^2)$, both in expectation and with high probability.

All that remains, therefore, is to show that the probability that $B_L$ will be the only non-empty bucket is tiny. Note that this happens if and only if there is a pair $u,v\in V$ for which $D(u,v)\ge L\delta\ge n^{-0.5}$.
By Lemma~\ref{L-max}, this probability is $O(n^{-c})$ for every $c>0$. If this extremely unlikely event happens, the running time is only increased to $O(n^2\log n)$, which has a negligible effect on the expected running time of the whole algorithm. We have thus obtained:

\begin{theorem} The expected running time of algorithm \apsp, when implemented using a bucket-based priority queue, and when run on a complete directed graph with edge weights selected uniformly at random from $[0,1]$ is $O(n^2)$. Furthermore, there is a constant $c>0$ such that the probability that the running time of the algorithm exceeds $cn^2$ is $O(n^{-1/60})$.
\end{theorem}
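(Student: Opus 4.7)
\emph{Plan.} The plan is to reduce everything to Theorem~\ref{T-APSP-2} instantiated with the bucket queue described above, and then feed in the $|\LSP|$ bounds from Theorems~\ref{T-LSP} and~\ref{T-hp}. The only real work is verifying that the bucket queue achieves $O(1)$ amortized cost per operation on the instances we care about, and checking that the low-probability fall-back to a Fibonacci-heap implementation contributes only $o(n^2)$ to the expectation.

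First I would isolate two good events: (i) $\delta := \min_{(u,v)\in E} c(u,v) \ge n^{-2.5}$, and (ii) $\max_{u,v} D(u,v) < L\delta$, so that the leftover bucket $B_L$ is never used. A union bound over the $n(n-1)$ uniform $[0,1]$ edge weights gives $\Pr[\neg(\mathrm{i})] \le n^{-1/2}$. Conditional on (i), $L\delta \ge n^2\cdot n^{-2.5} = n^{-1/2}$, and Lemma~\ref{L-max} then bounds $\Pr[\neg(\mathrm{ii})]$ by $O(n^{-c})$ for any fixed $c$. On the complement of either event the algorithm reverts to a Fibonacci-heap implementation with worst-case running time $O(n^2\log n)$, contributing only $O(n^{-1/2}\cdot n^2\log n) = o(n^2)$ to the expectation and a negligible term to the tail probability.

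Conditional on both good events, I would verify that \INSERT\ and \decreasekey\ run in $O(1)$ worst-case time by construction (each pair stores a pointer to its node in its bucket's linked list), and that \extractmin\ runs in $O(1)$ amortized time. The key point — Dinic's observation, already invoked to license the relaxed extraction rule — is that the sequence of keys extracted from $Q$ is monotonically non-decreasing. Hence the scanning pointer $k$ moves only forward across the buckets $B_1,\dots,B_{L-1}$, so the total work of all \extractmin\ calls is $O(L + N)$, where $N$ is the total number of heap operations. By Theorem~\ref{T-APSP-2} we have $N = O(n^2 + |\LSP|)$, and with $L = n^2$ this gives a total running time of $O(n^2 + |\LSP|)$ on the good events.

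To finish, I would apply the two bounds on $|\LSP|$. Theorem~\ref{T-LSP} gives $\Exp[|\LSP|] = O(n^2)$, yielding the $O(n^2)$ bound in expectation. For the tail bound, Theorem~\ref{T-hp} supplies a constant $c_0$ with $\Pr[|\LSP|\ge c_0 n^2] = O(n^{-1/26})$, which together with the $O(n^{-1/2})$ probability of the bad events (both dominated by $n^{-1/60}$) proves that for a suitable constant $c$ the running time exceeds $cn^2$ with probability only $O(n^{-1/60})$. The main obstacle, such as it is, is the amortized analysis of \extractmin, and that reduces to the monotonicity of the extracted keys, which is exactly what Dinic's relaxation of the extraction rule buys us.
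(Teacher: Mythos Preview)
Your proposal is correct and follows essentially the same route as the paper's proof in Section~\ref{S-heap}: isolate the bad event $\delta<n^{-2.5}$ (falling back to Fibonacci heaps there), use Dinic's relaxation plus monotonicity to get $O(1)$ amortized bucket-queue operations and total time $O(n^2+|\LSP|)$ on the good event, and then plug in Theorems~\ref{T-LSP} and~\ref{T-hp}. The only cosmetic difference is that you state the ``leftover bucket $B_L$'' contingency as a second bad event up front, whereas the paper handles it inline; and you quote the sharper $O(n^{-1/26})$ bound from Theorem~\ref{T-hp} rather than the looser $O(n^{-1/60})$ the paper cites at this point, but of course either suffices for the stated conclusion.
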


\section{Polylogarithmic update times}\label{S-update}

In this section we consider the expected time needed to update all shortest paths following a \emph{random edge update}, i.e., an update operation that chooses a random edge~$e$ of the complete directed graph, uniformly at random, and assigns it a new random weight, independent of all previous weights chosen, drawn uniformly at random from $[0,1]$.

Recall that $\SPP^-$ and $\LSP^-$ are the sets of shortest and locally shortest paths destroyed by an update operation, and that $\SPP^+$ and $\LSP^+$ are the sets of shortest and locally shortest paths that are created (or recreated) by an update operation. More specifically, we have
\[\begin{array}{ccc}
\SPP^- &=& \SPP_0(e) \;\cup\; (\SPP_0\smallsetminus \SPP_1)\;,\\
\SPP^+ &=& \SPP_1(e) \;\cup\; (\SPP_1\smallsetminus \SPP_0)\;,\\
\LSP^- &=& \LSP_0(e) \;\cup\; (\LSP_0\smallsetminus \LSP_1)\;,\\
\LSP^+ &=& \LSP_1(e) \;\cup\; (\LSP_1\smallsetminus \LSP_0)\;,
\end{array}\]
where, as in Section~\ref{S-high}, $\SPP_0$ and $\SPP_1$ are the sets of shortest paths before and after the update of~$e$, and $\SPP_0(e)$ and $\SPP_1(e)$ are the sets of shortest paths, before and after the update, that pass through~$e$. The sets $\LSP_0$, $\LSP_1$, $\LSP_0(e)$ and $\LSP_1(e)$, are the corresponding sets of locally shortest paths.

Our main goal is to bound the expected sizes of the sets $\SPP^-$, $\SPP^+$, $\LSP^-$ and $\LSP^+$. This, in conjunction with Theorem~\ref{T-dynamic}, would supply an upper bound on the expected update time.
By symmetry, it is easy to see that $\Exp[|\SPP^-|]=\Exp[|\SPP^+|]$ and $\Exp[|\LSP^-|]=\Exp[|\LSP^+|]$. We can thus concentrate on estimating $\Exp[|\SPP^-|]$ and $\Exp[|\LSP^-|]$.

Let $e$ be the random edge updated by a random edge update operation.
For every $u,v\in V$, let~$\pi_0[u,v]$ and~$\pi_1[u,v]$ be the shortest path from~$u$ to~$v$ before and after the update.
%
%
%
Let $B_i=\{(u,v) \mid e\in \pi_i[u,v]\}$, for $i\in\{0,1\}$, be the set of pairs of vertices
connected, before and after the update, by a shortest path passing through~$e$. (Note that $|B_i|=|\SPP_i(e)|$, for $i\in\{0,1\}$.)
It is easy to see that $\pi_0[u,v]\in \SPP^-$ if and only if $e\in \pi_0[u,v]$ or $e\in \pi_1[u,v]$. Thus, $\SPP^-=\{ \pi_0[u,v] \mid (u,v)\in B_0\cup B_1\}$ and similarly
$\SPP^+=\{ \pi_1[u,v] \mid (u,v)\in B_0\cup B_1\}$. In particular $|\SPP^-|=|\SPP^+|$. More importantly,
\[ |\SPP^-| \;\le\; |\SPP_0(e)| + |\SPP_1(e)|\;.\]
%
%
To bound $\Exp[|\SPP^-|]=\Exp[|\SPP^+|]$ it is thus enough to bound $\Exp[|\SPP_0(e)|]=\Exp[|\SPP_1(e)|]$.

\begin{lemma}\label{L-length} The expected number of edges on a shortest path between two random vertices is $(1+o(1))\ln n$.
\end{lemma}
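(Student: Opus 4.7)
The plan is to reduce to a known fact about random recursive trees. I would first work in the exponential edge-weight model and then transfer to the uniform model via the coupling discussed at the end of Section~\ref{S-DIST}.

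First, I would observe that for i.i.d.\ $EXP(1)$ edge weights, Dijkstra's algorithm from any fixed vertex $a$ produces a shortest-path tree whose distribution is exactly that of a \emph{random recursive tree} on $n$ vertices rooted at $a$. The key input is memorylessness: given the first $k$ vertices already in the tree, the remaining edge weights out of the boundary are still fresh $EXP(1)$ variables, so conditional on which vertex is added next, its parent in the tree is uniform over the $k$ vertices already present. (This identification is already used in Section~\ref{S-DIST}.) The number $N(a,b)$ of edges on the shortest path from $a$ to $b$ is then exactly the depth of $b$ in this random recursive tree.

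Second, I would invoke the standard recurrence for depths in a random recursive tree. If $d_k$ is the expected depth of the $k$-th vertex added (with $d_1=0$), then $d_k = 1 + \frac{1}{k-1}\sum_{j<k} d_j$, which solves to $d_k = H_{k-1}$. Since $b$ is uniform over the other $n-1$ vertices, its rank in the Dijkstra ordering is uniform on $\{1,\ldots,n-1\}$, so
$$\Exp[N(a,b)] \;=\; \frac{1}{n-1}\sum_{k=2}^{n} H_{k-1} \;=\; \frac{n H_{n-1}-(n-1)}{n-1} \;=\; \ln n + O(1) \;=\; (1+o(1))\ln n.$$

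Finally, I would transfer the bound to uniform edge weights using the measure-preserving map $W'(a,b) = -\ln(1-W(a,b))$ of Section~\ref{S-DIST}. Since $W \leq W' \leq W + 2W^2$ for $W \leq 1/2$, on the high-probability event (via Lemma~\ref{L-tail}) that every edge participating in any shortest path has weight $O(\log n/n)$, the two weight systems rescale each path length by a factor $1+O(\log n/n)$; choosing the gap parameter carefully, the shortest-path tree is unchanged on this event, so $N(a,b)$ takes the same value in both models. The complementary event has probability $o(1)$ and contributes at most $o(1)\cdot n = o(\log n)$ to the expectation by the trivial bound $N(a,b)<n$. The main delicate point will be quantifying this stability of the shortest-path tree under the $W\mapsto W'$ perturbation, but once that is established the lemma follows from the exponential calculation above.
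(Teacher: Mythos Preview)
Your approach is exactly the paper's: identify the shortest-path tree in the exponential model with a random recursive tree, quote the average-depth result, and then transfer to the uniform model. Your explicit computation $\Exp[N(a,b)]=\frac{1}{n-1}\sum_{k=2}^n H_{k-1}=\ln n+O(1)$ is correct and more detailed than the paper, which simply cites Moon for the average depth.

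The one place to be careful is the transfer step. The paper does not argue it directly; it just cites Section~2 of Janson~\cite{Janson99}. Your proposed route---showing that under the coupling $W'=-\ln(1-W)$ the shortest-path tree is literally unchanged on a high-probability event---is a stronger statement than you need, and ``all participating edges have weight $O((\log n)/n)$'' is not by itself enough to force it: even when every relevant edge is small, two competing $a$--$b$ paths can differ in total weight by an amount smaller than the $O((\log n)^2/n^2)$ per-edge perturbation, so the identity of the shortest path can flip. What you actually need is only that $\Exp[N(a,b)]$ agrees asymptotically in the two models, and for that Janson's argument (comparing the two distributions near $0$, where both have density $1$) is the cleanest route; if you want to stay with your coupling idea, you would have to separately control the probability that the shortest and second-shortest $a$--$b$ paths are within $O((\log n)^3/n^2)$ of each other, which is doable but is extra work beyond what the lemma warrants.
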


\begin{proof} When edge weights are exponential, the expected number of edges on a shortest path between two random vertices is exactly equal to the average depth of a vertex in a random recursive tree of size~$n$. (See, e.g., Janson \cite{Janson99}.) It is known that this average depth is $(1+o(1))\ln n$ (Moon \cite{Moon74}). The same asymptotic result holds also under the uniform distribution. (See Section~2 of Janson \cite{Janson99}.)
\end{proof}

\begin{lemma} The expected number of shortest paths that pass through a random edge $e$ is $(1+o(1))\ln n$.
\end{lemma}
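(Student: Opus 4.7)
The plan is to use a simple double-counting argument together with Lemma~\ref{L-length}. Let $N_{u,v}$ denote the number of edges on the shortest path from $u$ to $v$ (set to $0$ if $u=v$), and for each edge $e$ let $\SPP(e)$ be the set of (ordered pairs $(u,v)$ whose) shortest paths that pass through $e$. Then
\[ \sum_{e} |\SPP(e)| \;=\; \sum_{u,v} N_{u,v}, \]
since both sides count incidences between (ordered) pairs of vertices and edges that lie on the corresponding shortest path.

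Taking expectations over the random edge weights and using linearity, together with the fact that by symmetry every ordered pair $(u,v)$ with $u\ne v$ yields the same $\Exp[N_{u,v}]$, one gets
\[ \sum_{e} \Exp[|\SPP(e)|] \;=\; n(n-1)\cdot \Exp[N_{u,v}], \]
where $(u,v)$ is an arbitrary pair of distinct vertices. By Lemma~\ref{L-length}, $\Exp[N_{u,v}] = (1+o(1))\ln n$.

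Now take the random edge $e$: it is chosen uniformly among the $n(n-1)$ directed edges, independently of the weights. Hence
\[ \Exp[|\SPP(e)|] \;=\; \frac{1}{n(n-1)} \sum_{e'} \Exp[|\SPP(e')|] \;=\; (1+o(1))\ln n, \]
which is the desired bound. There is no real obstacle here; the main point is simply to observe the identity $\sum_e |\SPP(e)| = \sum_{u,v} N_{u,v}$ and then invoke the already-established expected path length from Lemma~\ref{L-length}.
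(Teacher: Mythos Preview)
Your proof is correct and is essentially identical to the paper's: both use the double-counting identity $\sum_{e}|\SPP(e)|=\sum_{u\ne v}|\pi[u,v]|$, invoke symmetry (equivalently, average over a uniformly random edge), and then apply Lemma~\ref{L-length}. The only difference is cosmetic phrasing.
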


\begin{proof} For every $u,v\in V$, let $\pi[u,v]$ be the shortest path from~$u$ to~$v$, and let $|\pi[u,v]|$ be the number of edges on it. For every edge $e$ of the complete graph, let $\SPP(e)$ be the set of shortest paths that pass through~$e$. 
By symmetry we have
$$\Exp[|\SPP(e)|] = E\biggl[\frac{1}{n(n-1)}\sum_{e'} |\SPP(e')|\biggr] = E\biggl[\frac{1}{n(n-1)}\sum_{u\ne v} |\pi[u,v]|\biggr]=\Exp[|\pi[u,v]|].$$
By Lemma~\ref{L-length}, we get that
$\Exp[|\SPP(e)|] = (1+o(1))\ln n$.
\end{proof}


\begin{theorem} Following a random edge update, we have $\Exp[|\SPP^-|]=\Exp[|\SPP^+|]\le (2+o(1))\ln n$.
\end{theorem}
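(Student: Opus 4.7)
The plan is to combine the set-theoretic bound $|\SPP^-|\le |\SPP_0(e)|+|\SPP_1(e)|$, which was already derived just before the theorem statement, with the preceding lemma giving $\Exp[|\SPP(e)|]=(1+o(1))\ln n$ for a uniformly random edge $e$ in a complete directed graph with i.i.d.\ $U[0,1]$ edge weights. First I would take expectations of both sides of the inequality above and use linearity to reduce the problem to bounding $\Exp[|\SPP_0(e)|]$ and $\Exp[|\SPP_1(e)|]$ separately.

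Next I would argue that each of these two terms has the distribution required to apply the previous lemma. For $\SPP_0(e)$ this is immediate: before the update, the weights of the complete directed graph are i.i.d.\ $U[0,1]$ and $e$ is chosen uniformly at random, independent of those weights. For $\SPP_1(e)$ the key observation is that the post-update configuration is again distributed as a complete directed graph with i.i.d.\ $U[0,1]$ weights (the new weight of $e$ was drawn fresh and independently) together with a uniformly chosen distinguished edge $e$. In both cases, therefore, the previous lemma gives $\Exp[|\SPP_0(e)|]=\Exp[|\SPP_1(e)|]=(1+o(1))\ln n$, and adding the two contributions yields $\Exp[|\SPP^-|]\le (2+o(1))\ln n$.

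Finally, the equality $\Exp[|\SPP^-|]=\Exp[|\SPP^+|]$ was noted from symmetry earlier in this section (swapping the roles of the pre-update and post-update weights gives a measure-preserving involution exchanging $\SPP^-$ and $\SPP^+$), so the same bound applies to $\Exp[|\SPP^+|]$.

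There is essentially no obstacle here: the combinatorial inequality and the expected count $\Exp[|\SPP(e)|]=(1+o(1))\ln n$ do all the work. The only point that deserves a sentence of justification is the distributional identity for $\SPP_1(e)$, i.e., that conditional on the update operation the post-update graph is again a fresh sample from the same random model, so that the preceding lemma can be reused verbatim rather than re-proved in the updated setting.
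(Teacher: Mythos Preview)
Your proposal is correct and matches the paper's approach exactly: the paper derives $|\SPP^-|\le |\SPP_0(e)|+|\SPP_1(e)|$, invokes the preceding lemma that $\Exp[|\SPP(e)|]=(1+o(1))\ln n$ for a uniformly random edge, and the theorem is then immediate from linearity and the fact that both the pre- and post-update configurations are distributed as fresh samples from the same model. The equality $\Exp[|\SPP^-|]=\Exp[|\SPP^+|]$ is likewise handled by symmetry in the paper (in fact the paper notes the stronger pointwise equality $|\SPP^-|=|\SPP^+|=|B_0\cup B_1|$).
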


Let $\Delta$ be the maximal degree of the essential graph $G^*=(V,E^*)$ defined in the previous section. Lemma~\ref{L-Delta} says that with high probability $\Delta=O(\log n)$.


\begin{theorem}\label{T-ELSP} Following a random edge update we have $\Exp[|\LSP^-|]=\Exp[|\LSP^+|]=O(\log^2 n)$.
\end{theorem}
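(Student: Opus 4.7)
The plan is to reduce, by the same symmetry used for $\SPP^\pm$, to bounding $\Exp[|\LSP^-|]$, and then exploit the structural decomposition $\LSP^- = \LSP_0(e) \cup (\LSP_0 \smallsetminus \LSP_1)$. The key structural observation is that if $\pi \in \LSP_0 \smallsetminus \LSP_1$ but $e \notin \pi$, then the weights along $\pi$ are unaffected by the update, so $\pi$ can fail to be an LSP only because $l[\pi]$ or $r[\pi]$ is no longer a shortest path, and hence lies in $\SPP_0 \smallsetminus \SPP_1 \subseteq \SPP^-$. Since each shortest path $\sigma$ serves as $l[\pi]$ or $r[\pi]$ for at most $2\Delta_0$ LSPs (at most $\Delta_0$ right-extensions and $\Delta_0$ left-extensions in the essential graph), I obtain
\[ |\LSP^-| \;\le\; 2|\LSP_0(e)| + 2\Delta_0\,|\SPP^-|. \]

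For the first term, since the updated edge $e$ is chosen uniformly at random from all $n(n-1)$ edges, independently of the initial weights, averaging over $e$ gives
\[ \Exp[|\LSP_0(e)|] \;=\; \frac{1}{n(n-1)}\,\Exp\!\Bigl[\sum_{\pi \in \LSP_0}|\pi|\Bigr]. \]
Using $|\pi| = |l[\pi]| + 1$ and the fact that each shortest path $\sigma$ is the left part $l[\pi]$ of at most $\Delta_0$ LSPs, I bound $\sum_\pi |\pi| \le \Delta_0\bigl(\sum_\sigma|\sigma| + |\SPP_0|\bigr)$. Lemma~\ref{L-length} yields $\Exp[\sum_\sigma|\sigma|] = (1+o(1))\,n(n-1)\ln n$, and $|\SPP_0|\le n^2$. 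Splitting the expectation on the event $\{\Delta_0 \le c\log n\}$ and using the tail bound $\Pr[\Delta_0 > c\log n] = O(n^{1-c/6})$ from Lemma~\ref{L-Delta} together with the trivial bound $\Delta_0\sum_\sigma|\sigma|\le n^4$ (for $c$ large enough) shows $\Exp[\sum_\pi |\pi|] = O(n^2\log^2 n)$, hence $\Exp[|\LSP_0(e)|] = O(\log^2 n)$.

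The second term $\Exp[\Delta_0|\SPP^-|]$ submits to the same tail-splitting: on $\{\Delta_0 \le c\log n\}$ it is at most $c\log n \cdot \Exp[|\SPP^-|] = O(\log^2 n)$ by the preceding theorem, while on the complement the crude bound $\Delta_0|\SPP^-|\le n\cdot n^2$ times $O(n^{1-c/6})$ is $o(1)$ for $c$ sufficiently large. Combining yields $\Exp[|\LSP^-|] = O(\log^2 n)$, and symmetry gives the same for $|\LSP^+|$.

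The main obstacle is that $\Delta_0$ is positively correlated with both $|\SPP^-|$ and $\sum_\sigma|\sigma|$ (a graph with large maximum essential degree tends to have more, longer shortest paths), so these expectations cannot simply be factored. Rather than computing covariances, I resolve this uniformly by the tail-splitting trick above, which exploits the strong polynomial tail of $\Delta_0$ from Lemma~\ref{L-Delta} to swallow crude deterministic bounds in the exceptional regime $\{\Delta_0 > c\log n\}$ while using only the already-established first-moment bound $\Exp[|\SPP^-|] = O(\log n)$ on the typical event.
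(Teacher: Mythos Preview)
Your proof is correct, but it takes a longer detour than necessary. The paper observes directly that $\pi\in\LSP^-$ if and only if $l[\pi]\in\SPP^-$ or $r[\pi]\in\SPP^-$. The point you missed is that the case $e\in\pi$ is already absorbed here: every edge of $\pi$ lies in $l[\pi]$ or in $r[\pi]$, so if $e\in\pi$ then one of these subpaths lies in $\SPP_0(e)\subseteq\SPP^-$. Hence the single inequality $|\LSP^-|\le 2\Delta\,|\SPP^-|$ covers both pieces of your decomposition at once, and the whole argument collapses to one tail-split on~$\Delta$ using $\Exp[|\SPP^-|]=O(\log n)$.

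Your separate treatment of $\LSP_0(e)$ via $\Exp\bigl[\sum_{\pi\in\LSP_0}|\pi|\bigr]$ is valid and yields the same $O(\log^2 n)$ bound, but it is redundant work. What your route does buy is an explicit estimate on the expected number of LSPs through a random edge, which is of some independent interest; the paper's route buys brevity.
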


\begin{proof} Clearly $\pi\in \LSP^-$ if and only if $l[\pi]\in \SPP^-$ or $r[\pi]\in \SPP^-$. Each shortest path has at most $2\Delta$ LSP extensions. Thus $|\LSP^-|\le 2\Delta \cdot|\SPP^-|$.
By Lemma~\ref{L-Delta}, we have $\Pr[\Delta>24\ln n]=O(n^{-3})$.  As $|\LSP|$ is always at most~$n^3$, we get
$\Exp[|\LSP^-|]\le 48\ln n\cdot \Exp[|\SPP^-|]+ n^{-3}\cdot n^3 = O(\log^2 n)$.
\end{proof}

We believe that the $O(\log^2 n)$ bound in Theorem~\ref{T-ELSP} can be improved, possibly to $O(\log n)$, and leave it as an open problem.

\begin{theorem} The expected running time of a random edge update, when a Fibonacci heap is used to implement the global heap, and simple linked lists are used to implement the local heaps, is $O(\log^2 n)$.
\end{theorem}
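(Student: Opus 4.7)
The plan is to apply Theorem~\ref{T-dynamic} and bound each of the four summands in expectation. With a Fibonacci heap for the global priority queue~$Q$ we have $T_{ins}(n^2),T_{dec}(n^2)=O(1)$ amortized and $T_{ext}(n^2)=O(\log n)$ amortized. With simple doubly-linked lists for the local heaps $P[u,v]$, together with pointers from every path to its record in the list, we have $T_{ins}(\Lambda),T_{del}(\Lambda)=O(1)$, while $T_{min}(\Lambda)=O(\Lambda)$ because a find-min requires scanning. Note that $T_{min}$ is only invoked inside the ``$|\SPP^-|$'' summand, when a destroyed shortest path is replaced by the cheapest LSP still stored in the corresponding local heap.

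Substituting these costs into the bound of Theorem~\ref{T-dynamic}, the update cost is
\[
O\bigl(\,|\SPP^-|\cdot(1+\Lambda+1)\;+\;|\SPP^+|\cdot\log n\;+\;|\LSP^-|\cdot 1\;+\;|\LSP^+|\cdot(1+1)\,\bigr).
\]
The summands not involving $\Lambda$ are handled directly by the earlier results: by the theorem preceding Theorem~\ref{T-ELSP} and the symmetry $\Exp[|\SPP^-|]=\Exp[|\SPP^+|]=O(\log n)$, the $|\SPP^+|\log n$ term has expectation $O(\log^2 n)$, and by Theorem~\ref{T-ELSP} the $|\LSP^\pm|$ terms each contribute expectation $O(\log^2 n)$. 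Similarly $\Exp[|\SPP^-|]=O(\log n)$ dispatches the constant-cost portion of the first summand.

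The one nontrivial term is $\Exp[\,|\SPP^-|\cdot\Lambda\,]$. To bound it, I would condition on the event $\mathcal{E}=\{\Lambda\le c\ln n\}$ for a sufficiently large constant~$c$. On~$\mathcal{E}$,
\[
\Exp\bigl[\,|\SPP^-|\cdot\Lambda\cdot \mathbf{1}_{\mathcal{E}}\,\bigr]\;\le\;c\ln n\cdot\Exp[\,|\SPP^-|\,]\;=\;O(\log^2 n).
\]
On the complement, I use the crude deterministic bounds $|\SPP^-|\le n^2$ and $\Lambda\le n$, together with the high-probability bound $\Pr[\Lambda>c\ln n]=O(n^{-c/6})$ inherited (in its two-weightings form) from Lemma~\ref{L-Delta}; for $c\ge 24$ this makes $\Exp[\,|\SPP^-|\cdot\Lambda\cdot\mathbf{1}_{\mathcal{E}^c}\,]=O(n^3\cdot n^{-4})=o(1)$. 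Summing the four contributions yields the claimed $O(\log^2 n)$ expected update time.

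The only real subtlety, and the step I expect to require the most care, is making sure the high-probability bound on $\Lambda$ is valid both before and after the random edge is resampled, since $\Lambda$ denotes the maximum number of LSPs between any pair of vertices across both graphs. Because the two edge weights are independent and Lemma~\ref{L-Delta} applies to each resulting graph individually, a simple union bound gives the required tail estimate with a slightly larger constant, which is absorbed by choosing $c$ large enough in the case analysis above.
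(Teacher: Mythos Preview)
Your argument is correct and follows the paper's intended route: substitute the data-structure costs into Theorem~\ref{T-dynamic} and control each term using the bounds on $|\SPP^\pm|$, $|\LSP^\pm|$, and $\Lambda$ established in Section~\ref{S-update}. The paper itself gives no explicit proof of this theorem; your write-up is exactly the kind of detail it leaves implicit.

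Two small points worth tightening. First, Lemma~\ref{L-Delta} yields $\Pr[\Delta>c\ln n]=O(n^{1-c/6})$, not $O(n^{-c/6})$; with your choice $c=24$ this gives $O(n^{-3})$ rather than $O(n^{-4})$, so the complementary contribution is $O(n^3\cdot n^{-3})=O(1)$, which is still absorbed into $O(\log^2 n)$ (or just take $c\ge 30$ to get the $o(1)$ you wrote). Second, the passage from Lemma~\ref{L-Delta} (which bounds $\Delta$, the maximum outdegree in the essential subgraph) to a bound on $\Lambda$ (the maximum number of LSPs between a fixed pair) deserves one explicit sentence: every LSP from~$u$ to~$v$ is determined by its first edge $u\to u'$, which must itself be a shortest path, so the number of such LSPs is at most the outdegree of~$u$ in~$G^*$, giving $\Lambda\le\Delta$. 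With that link made, your union bound over the two weightings is exactly right.
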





\section{Concluding remarks}\label{S-concl}

We presented an algorithm that solves the APSP problem on complete directed graphs with random edges weights in $O(n^2)$ time with high probability. The expected running time of the algorithm is also $O(n^2)$. This solves an open problem of Frieze and McDiarmid \cite{FrMc97}.

We also presented a dynamic algorithm that performs random edge updates in $O(\log^2 n)$ expected time. It is an interesting open problem whether this can be improved to $O(\log n)$.

Our results also hold in the directed $G(n,p)$ model in which each edge is selected with probability~$p$, where $p\gg (\ln n)/n$. Selected edges are again assigned independent, uniformly distributed, weights.
%
Similarly, it is easy to see that our results apply when edge weights are \emph{integers} chosen uniformly at random from, say, $\{1,2,\ldots,n\}$, where $n$ is the number of vertices.

\section*{Acknowledgment} The last author would like to thank Camil Demetrescu, Giuseppe Italiano and Mikkel Thorup for many illuminating discussions.



\makeatletter
\def\runninghead{\hrulefill\quad APPENDIX\quad\hrulefill}
\def\ps@headings{
\def\@oddhead{\footnotesize\rm\hfill\runninghead\hfill}}
\def\@evenhead{\@oddhead}
\def\@oddfoot{\rm\hfill\thepage\hfill}\def\@evenfoot{\@oddfoot}
\makeatother

\appendix

\section{The static algorithm -- complete description}\label{S-static}

\newcommand{\APSP}{
\parbox{3.2in}{
\begin{function}[H]
\SetVline \dontprintsemicolon

\BlankLine
$\init(G)$ \;
$Q\gets \heap()$ \;

\BlankLine
\ForEach{$(u,v)\in E$}
{
    $dist[u,v] \gets \weight(u,v)$ \;
    $p[u,v] \gets v$ \;
    $q[u,v] \gets u$ \;
    $\heapinsert(Q,(u,v),dist[u,v])$ \;
}

\BlankLine
\While{$Q\ne\emptyset$} {
    $(u,v) \gets \extractmin(Q)$ \;
    $\INSERT(L[p[u,v],v],u)$ \;
    $\INSERT(R[u,q[u,v]],v)$ \;
    \ForEach{$w\in L[u,q[u,v]]$}
    {
        $\EXAMINE(w,u,v)$
    }
    \ForEach{$w\in R[p[u,v],v]$}
    {
        $\EXAMINE(u,v,w)$
    }
}

\caption{\apsp(\mbox{$G=(V,E,\weight)$})}
\end{function}
} }

\newcommand{\INIT}{
\parbox{3.2in}{
\begin{function}[H]
\SetVline \dontprintsemicolon

\BlankLine
\ForEach{$u,v\in V$}
{
    $dist[u,v]\gets \infty$ \;
    $p[u,v]\gets \NULL$ \;
    $q[u,v]\gets \NULL$ \;
    $L[u,v]\gets \emptyset$ \; 
    $R[u,v]\gets \emptyset$ \;
}
\ForEach{$u\in V$}
{
    $dist[u,u]\gets 0$ \;
}

\caption{\init(\mbox{$G=(V,E,\weight)$})}
\end{function}

\vspace*{10pt}
 \begin{function}[H]
 \SetVline \dontprintsemicolon
\BlankLine
 \If{$dist[u,v]+dist[v,w]<dist[u,w]$}
 {
     $dist[u,w] \gets dist[u,v]+dist[v,w]$ \;
     \eIf{$p[u,w]=\NULL$} 
         {$\heapinsert(Q,(u,w),dist[u,w])$}
         {$\decreasekey(Q,(u,w),dist[u,w])$}
     $p[u,w] \gets p[u,v]$ \;
     $q[u,w] \gets q[v,w]$ \;
 }

 \caption{\EXAMINE(\mbox{$u,v,w$})}
 \end{function}
 }
 }

In this section we give a full description, and a correctness proof, of the static version of the Demetrescu and Italiano \cite{DeIt04a,DeIt06} used in this paper.
%
%
Pseudo-code of the algorithm, called \apsp, is given in
Figure~\ref{F-apsp}. The input to the algorithm is a weighted directed graph $G=(V,E,c)$, where $c:E\to (0,\infty)$ assigns positive weights (or costs) to the edges of the graph. The algorithm in Figure~\ref{F-apsp} works correctly only under the assumption that all shortest paths are \emph{unique}. Under essentially all probabilistic models considered in this paper, this assumption holds with probability~$1$. Algorithm \apsp\ is also interesting, however, in non-probabilistic settings.
For a simple way of dispensing with the uniqueness assumption, without increasing the running time of the algorithm by more than a constant factor, see Demetrescu and Italiano \cite{DeIt04a}.

We next prove Theorem~\ref{T-APSP-2} of Section~\ref{S-DI}, which we repeat for the convenience of the reader.

{
\renewcommand{\thetheorem}{\ref{T-APSP-2}}
\begin{theorem}
If all edge weights are positive and all shortest paths are unique, then algorithm \apsp\ correctly finds all the shortest paths in the graph.
Algorithm \apsp\ runs in $O(n^2\cdot(T_{ins}(n^2)+T_{ext}(n^2)) + |\LSP|\cdot T_{dec}(n^2))$ time, where $|\LSP|$ is the number of LSPs in the graph, and uses only $O(n^2)$ space.
\end{theorem}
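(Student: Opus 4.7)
The plan is to prove correctness by induction on the order in which pairs are extracted from $Q$, and to bound the running time by charging heap operations to LSPs and shortest paths.

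For correctness, I would establish the invariant that whenever pair $(u,v)$ is extracted, $dist[u,v]$ equals the true shortest-path distance $d(u,v)$, and that $p[u,v], q[u,v]$ encode the (unique) shortest path. The base case is the first extraction, whose key is the smallest initialized edge weight; since every path of length $\ge 2$ has strictly greater total weight (edge weights are positive), this pair is necessarily a single-edge shortest path. For the inductive step, suppose $(u,v)$ is extracted with $dist[u,v]>d(u,v)$, and decompose the true shortest path as $\pi^* = u\to u'\SP v'\to v$. Both subpaths $u\to u'\SP v'$ and $u'\SP v'\to v$ are shortest paths whose lengths are strictly smaller than $d(u,v)$ (again using positivity), so by Dijkstra-style monotonicity of the extracted key, both pairs $(u,v')$ and $(u',v)$ have been extracted earlier, and by the inductive hypothesis with correct distances and correct values of $p$ and $q$.

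The crucial step is to verify that at the moment the later of $(u,v')$ and $(u',v)$ is extracted, the extension lists were already populated so that \EXAMINE\ is invoked on the triple witnessing $\pi^*$. When $(u,v')$ is extracted, the code inserts $u$ into $L[p[u,v'],v'] = L[u',v']$; when $(u',v)$ is extracted, it inserts $v$ into $R[u',q[u',v]] = R[u',v']$. If $(u',v)$ is extracted second, then the \texttt{foreach} loop over $L[u',q[u',v]] = L[u',v']$ finds $u$ and calls \EXAMINE$(u,u',v)$, which compares $dist[u,u']+dist[u',v]$ with $dist[u,v]$. Here $dist[u,u'] = c(u,u')$ (since a strictly shorter $u$-to-$u'$ path would contradict $\pi^*$ being a shortest path) and $dist[u',v]=d(u',v)$, yielding an update to $dist[u,v] \le d(u,v)$, contradicting $dist[u,v]>d(u,v)$. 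The symmetric case with $(u,v')$ extracted second uses the $R$-list analogously. The degenerate case $u'=v'$ (path $u\to u'\to v$) is handled by interpreting $u'\SP v'$ as the empty path, and the same list bookkeeping carries through.

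For the running time, each pair $(u,v)$ enters $Q$ at most once (the first time $dist[u,v]$ becomes finite) and is extracted at most once, giving $O(n^2)$ \heapinsert\ and \extractmin\ operations. Work inside the two \texttt{foreach} loops of the main while-loop corresponds bijectively to LSPs: each iteration examines a path $w\to u\SP v$ (respectively $u\SP v\to w$) in which $w\to u\SP q[u,v]$ (resp.\ $p[u,v]\SP v\to w$) is a confirmed shortest path and $u\SP v$ has just been confirmed; hence the examined path is an LSP and each LSP is charged at most once (when its middle shortest path is extracted). Each \EXAMINE\ does $O(1)$ work plus at most one \decreasekey, giving a total of $O(|\LSP|\cdot T_{dec}(n^2))$. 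Summing yields the stated bound. Space is $O(n^2)$: the arrays $dist, p, q$ are of that size, $Q$ holds at most $n^2$ elements, and the total length of all $L$ and $R$ lists is at most $2n^2$ since each extraction adds one entry to each.

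The main obstacle is the correctness argument above, specifically confirming that $p$ and $q$ are set exactly as the induction requires at the instant the subpaths are extracted; this hinges on uniqueness of shortest paths (which pins down $p[u,v']=u'$ and $q[u',v]=v'$) together with the positivity-based strict inequality $d(u,v')<d(u,v)$ that forces the proper ordering of extractions.
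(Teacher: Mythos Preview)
Your proposal is correct and follows essentially the same approach as the paper's proof: a minimal-counterexample/induction argument showing that when $(u,v)$ is extracted its two sub-shortest-paths $(u,v')$ and $(u',v)$ have already been processed, so the LSP $u\to u'\SP v'\to v$ has been examined; the running time and space analyses match the paper's almost verbatim. The only cosmetic difference is that the paper phrases correctness as ``take the shortest shortest-path not found'' while you phrase it as induction on extraction order, and your phrase ``when its middle shortest path is extracted'' is slightly imprecise (the LSP $a\to b\SP c\to d$ is examined when the later of $(a,c)$ and $(b,d)$ is extracted, not when $(b,c)$ is), but this does not affect the argument.
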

\addtocounter{theorem}{-1}
}


\begin{figure}[b]
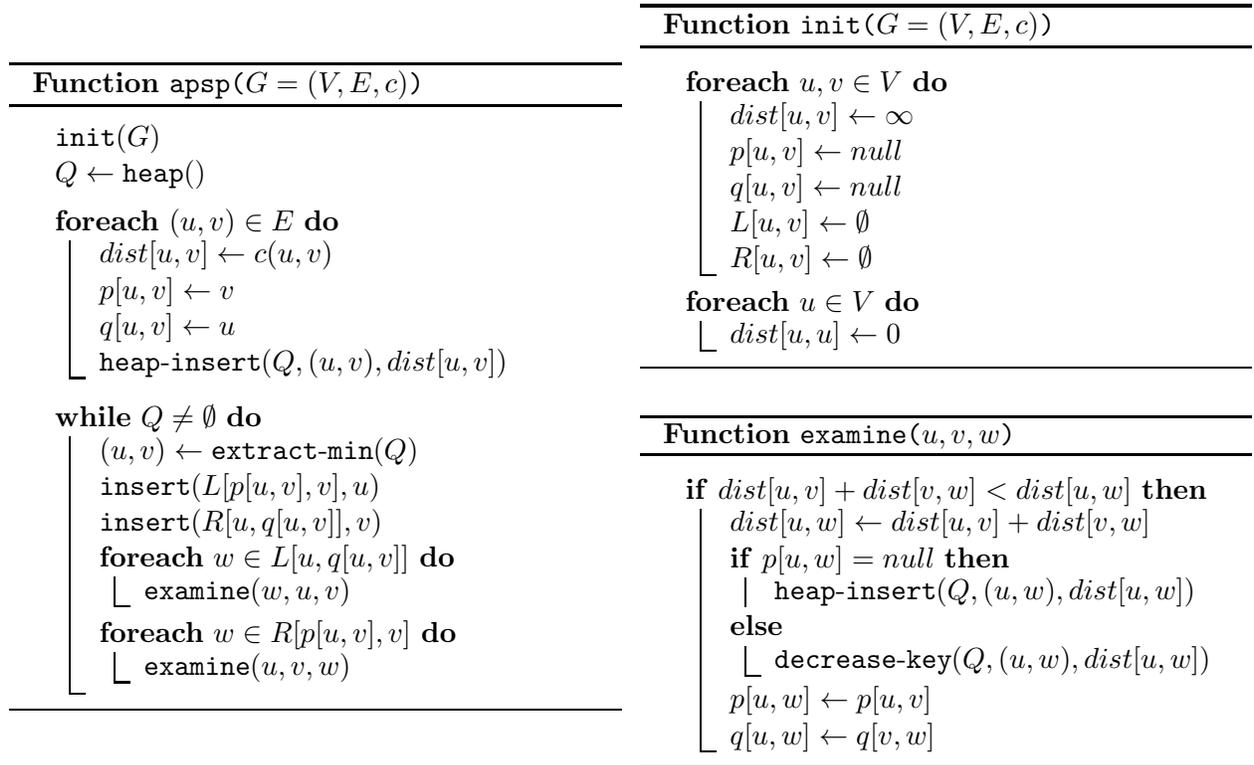

\begin{center}
\vspace*{10pt}
\APSP \INIT
\end{center}
\vspace*{-10pt} \caption{A static version of the APSP algorithm of Demetrescu and Italiano \cite{DeIt04a,DeIt06}.} \label{F-apsp}
\end{figure}

\begin{proof} It is easy to check that each stage during the operation of the algorithm, $dist[u,v]$ corresponds to some path from~$u$ to~$v$ in the graph and that this path, or an even shorter path, can be traced using the~$p$ and~$q$ fields. Thus, the distances returned by the algorithm can never be too small.

It is also easy to check that the keys of the pairs $(u,v)$ extracted from~$Q$ form a non-decreasing sequence and that a pair $(u,v)$ removed from~$Q$ is never inserted to~$Q$ again. Thus, the algorithm always terminates.

Assume, for the sake of contradiction, that the algorithm fails to find a shortest path $u\SP v$, for some $u,v\in V$. Let $u\SP v$ be a shortest shortest-path not found by the algorithm. (In other words, if $u'\SP v'$ is shorter than $u\SP v$, then $u'\SP v'$ is found by the algorithm.)

If $u\SP v$ is simply the edge $u\to v$, then we immediately get a contradiction, as the algorithm starts by setting $dist[u,v]$ to $c(u,v)$ (and $p[u,v]$ to $v$, and $q[u,v]$ to~$u$), for every $(u,v)\in E$. Thus, the algorithm does find the shortest path $u\SP v=u\to v$, a contradiction.

Assume, therefore, that $u\SP v=u\to u'\SP v'\to v$ is composed of at least two edges. (If it is composed of exactly two edges, then $u'=v'$.) Clearly $u\to u'\SP v'$ and $u'\SP v'\to v$ are also shortest paths and their length is strictly smaller than the length of $u\SP v$, as $c(u,u'),c(v',v)>0$. Thus, by the our assumptions, $u\to u'\SP v'$ and $u'\SP v'\to v$ are discovered by the algorithm. When the second of these is discovered, the algorithm examines the path $u\SP v=u\to u'\SP v'\to v$ and sets $dist[u,v]$ to its length. Also $(u,v)$ is added to~$Q$ if it is not already there. As there is no shorter path from $u$ to~$v$ in the graph, the values of $dist[u,v]$, $p[u,v]$ and $q[u,v]$ would never be changed again, contradicting the assumption that the algorithm does not find the shortest path from~$u$ to~$v$.


We next analyze the running time of algorithm. Each pair $(u,v)$ is inserted and extracted from the priority queue~$Q$ at most once. The total cost of these operations is $O(n^2(T_{ins}(n^2)+T_{ext}(n^2)))$. All paths considered by the algorithm are LSPs. The algorithm examines each LSP exactly once. For each LSP it performs a constant number of operations followed perhaps by a \decreasekey\ operation. The total cost of all these operations is $O(|\LSP|\,T_{dec}(n^2))$. The complexity of all other operations is negligible.

Finally, to see that the algorithm uses only $O(n^2)$ space, note that
the removal of a pair $(u,v)$ from the heap~$Q$ causes the insertion of only two elements to lists $L[u',v']$ and $R[u',v']$. As each pair $(u,v)$ is extracted at most once, the total size of all these lists is $O(n^2)$.
\end{proof}

\section{The dynamic algorithm -- complete description}\label{S-dynamic}

\newcommand{\PATHV}{
\parbox{1.8in}{
\begin{function}[H]
\SetVline \dontprintsemicolon

 $\pi\gets \newpath()$ \;
 $l[\pi]\gets \NULL$ \;
 $r[\pi]\gets \NULL$ \;
 $start[\pi]\gets v$ \;
 $end[\pi] \gets v$ \;
 $first[\pi]\gets \NULL$ \;
 $last[\pi] \gets \NULL$ \;
 $cost[\pi] \gets 0$ \;
 $sp[\pi]\gets \TRUE$ \;
 $L[\pi],R[\pi]\gets \emptyset$ \;
 $SL[\pi],SR[\pi]\gets \emptyset$ \;
 \Return{$\pi$}

\caption{\PATH(\mbox{$v$})}
\end{function}
} }

\newcommand{\PATHE}{
\parbox{2.3in}{
\begin{function}[H]
\SetVline \dontprintsemicolon

 $\pi\gets \newpath()$ \;
 $l[\pi]\gets p[u]$ \;
 $r[\pi]\gets p[v]$ \;
 $start[\pi]\gets u$ \;
 $end[\pi] \gets v$ \;
 $first[\pi]\gets e$ \;
 $last[\pi] \gets e$ \;
 $cost[\pi] \gets c[e]$ \;
 $sp[\pi]\gets \FALSE$ \;
 $L[\pi],R[\pi]\gets \emptyset$ \;
 $SL[\pi],SR[\pi]\gets \emptyset$ \;
 $\INSERT(L[p[v]],\pi)$ \;
 $\INSERT(R[p[u]],\pi)$ \;
 \Return{$\pi$}

\caption{\PATH(\mbox{$e=(u,v)$})}
\end{function}
} }

\newcommand{\PATHP}{
\parbox{2.6in}{
\begin{function}[H]
\SetVline \dontprintsemicolon

 \lIf{$r[\pi_1]\ne l[\pi_2]$}{error} \;
 $\pi\gets \newpath()$ \;
 $l[\pi]\gets \pi_1$ \;
 $r[\pi]\gets \pi_2$ \;
 $start[\pi]\gets start[\pi_1]$ \;
 $end[\pi] \gets end[\pi_2]$ \;
 $first[\pi]\gets first[\pi_1]$ \;
 $last[\pi] \gets last[\pi_2]$ \;
 $cost[\pi] \gets c[first[\pi]]+cost[\pi_2]$ \;
 $sp[\pi]\gets \FALSE$ \;
 $L[\pi],R[\pi]\gets \emptyset$ \;
 $SL[\pi],SR[\pi]\gets \emptyset$ \;
 $\INSERT(L[\pi_2],\pi)$ \;
 $\INSERT(R[\pi_1],\pi)$ \;
 \Return{$\pi$}

\caption{\PATH(\mbox{$\pi_1,\pi_2$})}
\end{function}
} }

\newcommand{\DINIT}{
\parbox{3in}{
\begin{function}[H]
\SetVline \dontprintsemicolon

\BlankLine
$Q\gets \heap()$ \;

\BlankLine
\ForEach{$u,v\in V$}
{
    $\pi[u,v] \gets \NULL$ \;
    $dist[u,v] \gets \infty$ \;
    $P[u,v]\gets \heap()$ \;
}

\BlankLine
\ForEach{$u\in V$}
{
    $p[u] \gets \PATH(u)$ \;
    $\pi[u,u] \gets p[u]$ \;
    $dist[u,u] \gets 0$ \;
}

\BlankLine
\ForEach{$e\in E$}
{
    $p[e] \gets \PATH(e)$ \;
    $\examine(p[e])$
}

\BlankLine
$\buildpaths()$
\caption{\dapspinit(\mbox{$G=(V,E,c)$})}
\end{function}
}
}


\newcommand{\BUILDPATHS}{
\parbox{3in}{
\begin{function}[H]
\SetVline \dontprintsemicolon

\While{$Q\ne \emptyset$}
{
    $(u,v) \gets \extractmin(Q)$ \;
    $\newshortestpath(\pi[u,v])$
}

\caption{\buildpaths($\,\!$)}
\end{function}
}}

\newcommand{\INITBUILDPATHS}{
\parbox{3in}{
\begin{function}[H]
\SetVline \dontprintsemicolon

\ForEach{$u\ne v\in V$}
{
    $dist[u,v] \gets \infty$ \;
    $\pi[u,v] \gets \NULL$ \;
    \If{$P[u,v]\ne\emptyset$}
    {
       $\pi\gets \findmin(P[u,v])$ \;
       $dist[u,v] \gets cost[\pi]$ \;
       $\pi[u,v] \gets \pi$
       $\heapinsert(Q,(u,v),dist[u,v])$
    }

}
\caption{\initbuildpaths($\,\!$)}
\end{function}
}
}


\newcommand{\NEWSHORTPATH}{
\parbox{3in}{
\begin{function}[H]
\SetVline \dontprintsemicolon

\BlankLine
$sp[\pi]\gets \TRUE$ \;

\BlankLine
$\INSERT(SL[r[\pi]],\pi)$ \;
$\INSERT(SR[l[\pi]],\pi)$ \;

\BlankLine
\ForEach{$\pi'\in SL[l[\pi]]$} {
    $\pi'' \gets \PATH(\pi',\pi)$ \;
    $\examine(\pi'')$
}

\BlankLine
\ForEach{$\pi'\in SR[r[\pi]]$} {
    $\pi'' \gets \PATH(\pi,\pi')$ \;
    $\examine(\pi'')$
}

\caption{\newshortestpath(\mbox{$\pi$})}
\end{function}
}}

\newcommand{\DEXAMINE}{
\parbox{3in}{
\begin{function}[H]
\SetVline \dontprintsemicolon
    \BlankLine
    $u \gets start[\pi]$ ; $v\gets end[\pi]$ \;
    $\heapinsert(P[u,v],\pi,cost[\pi])$ \;


    \BlankLine
    \If{$cost[\pi]<dist[u,v]$}
    {
        \BlankLine
        \If{$\pi[u,v]\ne \NULL$}
        {
            $sp[\pi[u,v]] \gets \FALSE$ \;
            $\removeextensions(\pi[u,v],\FALSE)$
        }
        \BlankLine
        $\pi[u,v] \gets \pi$ \;
        $dist[u,v] \gets cost[\pi]$ \;
        $\heapinsert(Q,(u,v),cost[\pi])$
    }

\caption{\examine(\mbox{$\pi$})}
\end{function}

}

}


\newcommand{\INSERTEDGES}{
\parbox{2.8in}{
\begin{function}[H]
\SetVline \dontprintsemicolon

\ForEach{$e=(u,v)\in E_{ins}$}
{
    $\INSERT(E[u],e)$ \;
    $\INSERT(E[v],e)$ \;
    $p[e] \gets \PATH(e)$ \;
    $\heapinsert(P[u,v],p[e],cost[e])$
}
\caption{\insertedges(\mbox{$E_{ins}$})}
\end{function}

\vspace*{10pt}

\begin{function}[H]
\SetVline \dontprintsemicolon
\ForEach{$e=(u,v) \in E_{ins}$}
{
    $\DELETE(E[u],e)$ \;
    $\DELETE(E[v],e)$ \;
    $\removepath(p[e],\TRUE)$
}

\caption{\deleteedges(\mbox{$E_{del}$})}
\end{function}
} }


\newcommand{\REMOVEPATH}{
\parbox{3in}{
\begin{function}[H]
\SetVline \dontprintsemicolon

\BlankLine
$u\gets start[\pi]$ ; $v\gets end[\pi]$ \;
$\heapdelete(P[u,v],\pi)$ \;


\BlankLine
    $\DELETE(R[l[\pi]],\pi)$ \;
    $\DELETE(L[r[\pi]],\pi)$ \;

    \BlankLine
    \If{$sp[\pi]=\TRUE$}
    {
    \If{$rep=\TRUE$}
    {
        $\INSERT(A,(u,v))$
    }
    $\DELETE(SR[l[\pi]],\pi)$ \;
    $\DELETE(SL[r[\pi]],\pi)$ \;
    }

\BlankLine
$\removeextensions(\pi,rep)$ \;

\caption{\removepath(\mbox{$\pi,rep$})}
\end{function}
}
}


\newcommand{\REMOVEEXTENSIONS}{
\parbox{3in}{
\begin{function}[H]
\SetVline \dontprintsemicolon
\BlankLine
    \ForEach{$\pi'\in L[\pi]\cup R[\pi]$}
    {
        $\removepath(\pi',rep)$
    }
\caption{\removeextensions(\mbox{$\pi,rep$})}
\end{function}
}
}


\newcommand{\REPLACEPATH}{
\parbox{3in}{
\begin{function}[H]
\SetVline \dontprintsemicolon

\BlankLine
\eIf{$P[u,v]\ne \emptyset$}
{
    $\pi \gets \findmin(P[u,v])$ \;
    $\pi[u,v] \gets \pi$ \;
    $dist[u,v] \gets cost[\pi]$ \;
    $\heapinsert(Q,(u,v),cost[\pi])$ \;
}
{
    $\pi[u,v] \gets \NULL$ \;
    $dist[u,v] \gets \infty$
}

\caption{\replacepath(\mbox{$u,v$})}
\end{function}
}
}

\newcommand{\UPDATE}{
\parbox{3.0in}{
\begin{function}[H]
\SetVline \dontprintsemicolon

\BlankLine
$A\gets \emptyset$ \;
\ForEach{$e\in E'$}
{
    $\removepath(p[e],\TRUE)$ \;
}

\BlankLine
$Q \gets \heap()$ \;
\ForEach{$(u,v)\in A$}
{
    $\replacepath(u,v)$
}

\BlankLine
\ForEach{$e\in E'$}
{
    $c[e]\gets c'[e]$ \;
    $p[e] \gets \PATH(e)$ \;
    $\examine(p[e])$
}

\BlankLine
$\buildpaths()$

\caption{update(\mbox{$E',c'$})}
\end{function}
}
}


\begin{figure}[t]
\begin{center}
\PATHV \PATHE 
\PATHP
\end{center}
\vspace*{-10pt}
\caption{Generating new paths and inserting it into the path system.}
\label{F-PATH}
\end{figure}

As explained, one of the main differences between the static and dynamic algorithms
is that the dynamic algorithm explicitly maintains all LSPs in a \emph{path system}, and does not just examine them. Paths are created by the three constructors $\PATH(v)$, $\PATH(e)$ and $\PATH(\pi_1,\pi_2)$ given in Figure~\ref{F-PATH}.
$\PATH(v)$ generates a path of length $0$ containing the vertex~$v$. $\PATH(e)$ generates a path composed of the edge~$e$. $\PATH(\pi_1,\pi_2)$ takes two paths~$\pi_1$ and~$\pi_2$ such that $r[\pi_1]=l[\pi_2]$ and constructs a path~$\pi$ such that $l[\pi]=\pi_1$ and $r[\pi]=\pi_2$. The new path~$\pi$ is composed of the first edge of~$\pi_1$ followed by~$\pi_2$, or equivalently, by $\pi_1$ followed by the last edge of~$\pi_2$.

Every path $\pi$ has the following fields:

\smallskip
$\quad l[\pi]$ - A pointer to the path obtained by removing the last edge of~$\pi$.

$\quad r[\pi]$ - A pointer to the path obtained by removing the first edge of~$\pi$.

$\quad start[\pi]$ - The first vertex on~$\pi$.

$\quad end[\pi]$ - The last vertex on~$\pi$.

$\quad first[\pi]$ - The first edge on~$\pi$.

$\quad last[\pi]$ - The last edge on $\pi$.

$\quad cost[\pi]$ - The total cost (weighted length) of $\pi$.

$\quad sp[\pi]$ - \TRUE\ if and only if $\pi$ is known to be a shortest path.

$\quad L[\pi]$ - List of left LSP extensions of~$\pi$.

$\quad R[\pi]$ - List of right LSP extensions of~$\pi$.

$\quad SL[\pi]$ - List of left shortest path extensions of~$\pi$.

$\quad SR[\pi]$ - List of right shortest path extensions of~$\pi$.

\smallskip
The lists $SL[\pi]$ and $SR[\pi]$ are similar to the lists $L[u,v]$ and $R[u,v]$ used by the static algorithm. This time, however, they contain actual paths and not vertices. The lists $L[\pi]$ and $R[\pi]$ contain all LSPs, already constructed, obtained by extending $\pi$ by one edge at its beginning or end, respectively.

The initialization function of the dynamic version, called \dapspinit, is given in Figure~\ref{F-update}. It is similar to the static \apsp\ algorithm. It too uses a global heap~$Q$ that stores pairs of vertices for which shortest paths are sought.
For every $v\in V$, we let $p[v]$ be the empty path consisting of~$v$.
For every edge $e\in E$, we let~$p[e]$ denote the path consisting of~$e$.
For every two vertices $u,v\in V$, the dynamic algorithm maintains the following information:

\smallskip
$\quad \pi[u,v]$ - The shortest path from~$u$ to~$v$ found so far.

$\quad cost[u,v]$ - The cost of the shortest path from~$u$ to~$v$ found so far.

$\quad P[u,v]$ - a heap containing all the LSPs from~$u$ to~$v$ found so far.

\smallskip
We refer to $P[u,v]$ as the \emph{local heap} corresponding to the pair $(u,v)$. We refer to~$Q$ as the \emph{global heap}.

The initialization function \dapspinit\ starts
with some obvious initializations. (For every $u,v\in V$, it sets~$\pi[u,v]$ to $\NULL$, sets $dist[u,v]$ to $\infty$, sets $P[u,v]$ to an empty heap, etc.) For every $e\in E$ it then creates the path $p[e]$, by calling $\PATH(e)$, and then \emph{examines} it by calling $\examine(p[e])$, given in Figure~\ref{F-examine}.

The function $\examine(\pi)$ receives a newly created LSP connecting two vertices~$u=start[\pi]$ and~$v=end[\pi]$. It starts by inserting it into the heap $P[u,v]$ with key $cost[\pi]$. It then checks whether $\pi$ is the first available LSP from~$u$ to~$v$, or whether it is shorter than all existing LSPs between $u$ and~$v$.
If $\pi$ is shorter than~$\pi[u,v]$, the shortest available path from~$u$ to~$v$, then
$\pi[u,v]$ is clearly not a shortest path. The algorithm thus sets $sp[\pi[u,v]]$ to $\FALSE$.
It then removes all extensions of~$\pi[u,v]$ from the system, if there are any. This is done by a call to $\removeextensions(\pi[u,v],\FALSE)$ which we discuss later.
Finally, if $\pi$ is currently the shortest available path from~$u$ to~$v$, \examine\ updates $\pi[u,v]$ and $dist[u,v]$ accordingly. It also inserts $(u,v)$ into the global heap, if it is not already there, or decreases its key to $cost[\pi]$. (We assume that $\heapinsert$ does exactly that, i.e., inserts an item into a heap with a given key, or decreases its key, if the item is already in the heap.)

\dapspinit\ then calls \buildpaths\ which is also given in Figure~\ref{F-update}. \buildpaths\ repeatedly removes a pair $(u,v)$ with the smallest key from the global heap~$Q$. The corresponding path $\pi[u,v]$ is then a shortest path. The call $\newshortestpath(\pi)$ is then made.

The function $\newshortestpath(\pi)$ receives a newly discovered shortest path. It sets to $sp[\pi]$ to \TRUE. It inserts $\pi$ to the lists $SL[r[\pi]]$ and $SR[r[\pi]]$, as $\pi$ is now a shortest path left extension of $r[\pi]$ and a shortest path right extension of $l[\pi]$. (Note that $\pi$ is already contained in $L[r[\pi]]$ and $R[l[\pi]]$ at this stage.) Most importantly, $\newshortestpath(\pi)$ now constructs LSPs extensions of~$\pi$ and examines each one of them.
(These operations may add new pairs into the global heap~$Q$.)

Using essentially the same arguments used to prove Theorem~\ref{T-APSP-2}, we get that \dapspinit\
correctly finds all shortest and locally shortest paths in the graph.

Updates are performed by calling \update, also given in Figure~\ref{F-update}.
$\update(E',c')$ assigns the edges of~$E'$ new edges weights and recomputes all shortest paths. 
$\update(E',c')$ starts by removing all paths that pass through edges of~$E'$. This done by calling $\removepath(p[e],\TRUE)$, for every \mbox{$e\in E'$}. (Function $\removepath$ is discussed below.) These removals create a list~$A$ of pairs $(u,v)$ that lost their shortest path. For every $(u,v)\in A$, a call is made to $\replacepath(u,v)$. Paths corresponding to all edges of~$E'$ are recreated, with their new costs, and these edge paths are examined. All updated shortest paths are then obtained by a call to \buildpaths.

$\replacepath(u,v)$, given in Figure~\ref{F-removepaths}, receives a pair of vertices $(u,v)$ such that the shortest path from~$u$ to~$v$ has just been destroyed. It finds the shortest path~$\pi$ in $P[u,v]$, if there is one, and performs the necessary updates. (Note that $\pi$ is not necessarily the shortest path from~$u$ to~$v$. It is just the shortest path currently available.)

Finally, paths and their extensions are removed from the path system by the functions \removepath\ and \removeextensions\ also given in Figure~\ref{F-removepaths}. To remove a path~$\pi$ from the path system, $\removepath(\pi,rep)$ deletes $\pi$ from $P[u,v]$, where $u=start[\pi]$ and $v=end[\pi]$ are the endpoints of~$\pi$. It also deletes~$\pi$ from $R[l[\pi]]$ and $L[r[\pi]]$. If~$\pi$ is marked as a shortest path, i.e., $sp[\pi]=\TRUE$, then~$\pi$ is also removed from $SR[l[\pi]]$ and $SL[r[\pi]]$. Finally, if $sp[\pi]=\TRUE$ and $rep=\TRUE$, then $(u,v)$ is inserted into a list~$A$ of pairs who lost their shortest paths. $\removeextensions(\pi,rep)$ removes all the extensions of~$\pi$ from the path system, by calling $\removepath(\pi')$, for every $\pi'\in L[\pi]\cap R[\pi]$.


Theorem~\ref{T-dynamic} now follows by examining the operation of the algorithm. When a shortest path is destroyed it is removed from its local heap. In some cases, the shortest path in the local heap is found and a pair $(u,v)$ is inserted into the global heap. The total cost of these operations is at most $T_{del}(\Lambda)+T_{min}(\Lambda)+T_{ins}(n^2)$, where $\Lambda$ is an upper bound on the size of the local heaps. Each new shortest path is extracted from the global heap at a total cost of $T_{ext}(n^2)$. Each LSP destroyed is removed from its local heap at a cost of $T_{del}(\Lambda)$.
Finally, each LSP created is inserted into the appropriate local heap and possibly causes a decrease-key operation on the global heap, a total cost of $T_{ins}(\Lambda)+T_{dec}(n^2)$.

\begin{figure}[t]
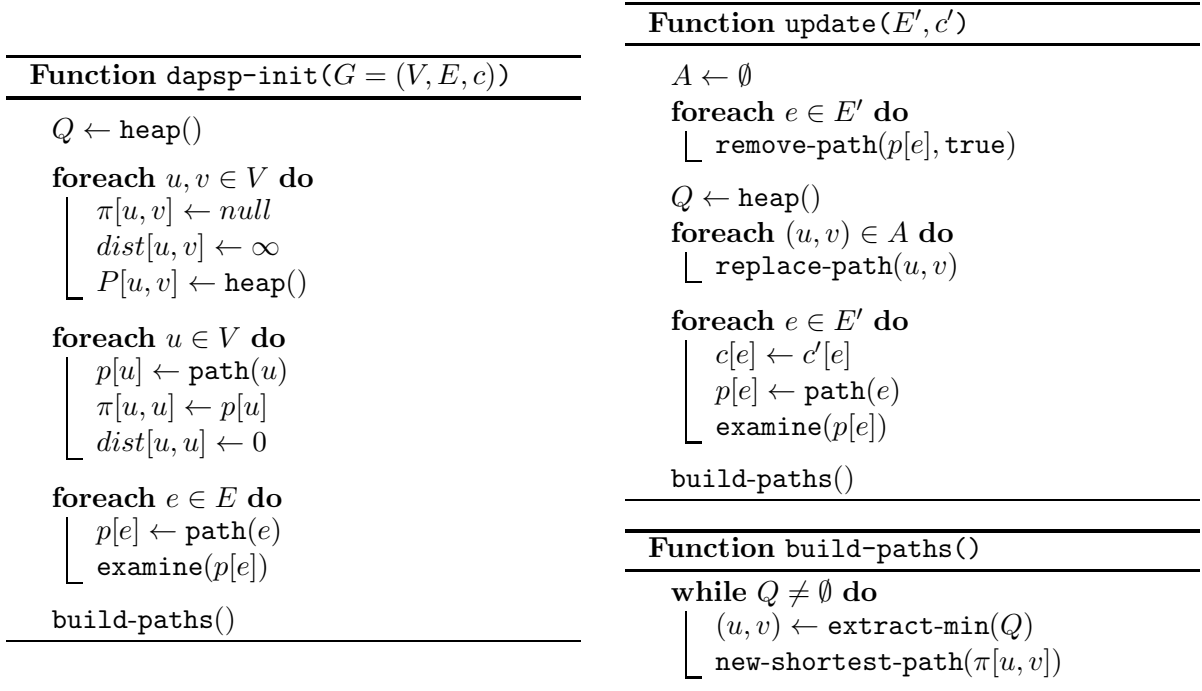

\begin{center}
\DINIT \hspace*{10pt}
\parbox{3in}{\UPDATE

\vspace*{3pt}
\BUILDPATHS}
\end{center}
\vspace*{-10pt}
\caption{Initiating and updating the dynamic all-pairs shortest paths data structure.}
\label{F-update}
\end{figure}

\begin{figure}[t]
\begin{center}
\NEWSHORTPATH \hspace*{0.8cm} \DEXAMINE
\end{center}
\vspace*{-10pt}
\caption{The functions \newshortestpath\ and \examine.}\label{F-examine}
\end{figure}

\begin{figure}[t]
\begin{center}
\REMOVEPATH \hspace*{10pt}
\parbox{3in}{
\REMOVEEXTENSIONS

\vspace*{2pt}
\REPLACEPATH}
\end{center}
\vspace*{-10pt}
\caption{Removing a path and its extensions from the path system.}
\label{F-removepaths}
\end{figure}

\end{document}